%% file: General_position_in_digraphs_arXiv.tex
\documentclass[a4paper,12pt]{article}
\usepackage[left=2.5cm,right=2.5cm,top=2.5cm,bottom=2.5cm]{geometry}
\usepackage{amssymb}
\usepackage{amsthm}
\usepackage{tikz}
\usepackage{url}
\usepackage{tikz,pgfplots}
\usetikzlibrary{decorations.markings}
\usepackage{amsmath,amssymb}
\usepackage{todonotes}
\usetikzlibrary{decorations.markings}
\usepackage{color}
\usepackage{graphicx}
\usepackage{placeins,caption}
\usepackage{float}
\usepackage{hyperref}
\usepackage{xspace}

\hypersetup{colorlinks=true, linkcolor=blue, citecolor=blue, urlcolor=blue}

\newtheorem{theorem}{Theorem}[section]
\newtheorem{lemma}[theorem]{Lemma}
\newtheorem{corollary}[theorem]{Corollary}
\newtheorem{proposition}[theorem]{Proposition}

\newtheorem{conjecture}[theorem]{Conjecture}
\newtheorem{problem}[theorem]{Problem}

\theoremstyle{definition}
\newtheorem{definition}[theorem]{Definition}
\newtheorem{observation}[theorem]{Observation}

\newcommand{\IS}{{\sc Independent Set}\xspace}
\newcommand{\GP}{{\sc Oriented-GP}\xspace}

\newcommand{\ac}{{\rm a}}
\newcommand{\sink }{{\rm Sink}}
\newcommand{\nsink }{{\rm Sink'}}
\newcommand{\source }{{\rm Source}}
\newcommand{\nsource }{{\rm Source'}}

\newcommand{\gp}{{\rm gp}}
\newcommand{\diam}{{\rm diam}}
\newcommand{\diamm}{\rm diam ^*}
\newcommand{\ext}{{\rm Ext}}
\newcommand{\ugp}{{\rm gp^{\rightarrow }}}
\newcommand{\lgp}{{\rm gp_{\rightarrow }}}
\DeclareMathOperator{\Circ}{Circ}
\DeclareMathOperator{\Ka}{Ka}
\DeclareMathOperator{\Pe}{P}
\newcommand{\Z}{\mathbb{Z}}

\tikzset{middlearrow/.style={
		decoration={markings,
			mark= at position 0.7 with {\arrow[scale=2]{#1}} ,
		},
		postaction={decorate}
	}
}

\tikzset{midarrow/.style={
		decoration={markings,
			mark= at position 0.5 with {\arrow[scale=2]{#1}} ,
		},
		postaction={decorate}
	}
}

\let\deg\relax
\DeclareMathOperator {\deg} {deg}

\newcommand{\address}[1]{#1}

\begin{document}
	
	\title{The general position number of digraphs}	
	\author{ Ullas Chandran S.V. $^{a}$ \\ \texttt{\footnotesize svuc.math@gmail.com}        
		\and Gabriele Di Stefano $^{b}$ \\ \texttt{\footnotesize gabriele.distefano@univaq.it}
		\and Grahame Erskine $^{c}$ \\ \texttt{\footnotesize grahame.erskine@open.ac.uk}
		\and
		Haritha S $^{a}$ \\ \texttt{\footnotesize harithasreelatha1994@gmail.com}
		\and Elias John Thomas $^{d}$ \\ \texttt{\footnotesize eliasjohnkalarickal@gmail.com}
		\and James Tuite $^{c,e}$ \\ \texttt{\footnotesize james.t.tuite@open.ac.uk}
	}
	
	\maketitle	
	
	\address{
		\noindent
		$^a$  Department of Mathematics, Mahatma Gandhi College, University of Kerala,  \\Thiruvananthapuram-695004, Kerala, India \\
		$^b$ Department of Information Engineering, Computer Science and Mathematics, University of L'Aquila,  Italy \\
		$^c$ School of Mathematics, Open University, UK\\
		$^d$ Department of Mathematics, Greenshaw High School, Sutton, UK \\
		$^e$ Department of Informatics and Statistics, Klaip\.{e}da University, Lithuania\\
		
	}

	\begin{abstract}
		The general position number for graphs ask for largest vertex subsets $S$ such that no three vertices are contained on a common shortest path. We examine this problem in the setting of directed graphs. We provide bounds for the general position number of digraphs, show that the problem is NP-complete, investigate the problem for some important families of digraphs such as circulant digraphs, Kautz digraphs and permutation digraphs, and study the general position numbers obtained from all orientations of an undirected graph.
	\end{abstract}
	\noindent {\bf Key words:} general position number, digraph, general position set, convex set.
	
	\medskip\noindent
	
	{\bf AMS Subj.\ Class:} 05C12; 05C20; 05C69.

	\section{Introduction}\label{sec:intro}

	The initial inspiration of the general position problem was a chessboard puzzle by the 19th century puzzle-maker Henry Dudeney~\cite{dudeney-1917}, which asked how many pawns can be placed on a chessboard without any three lying on a straight line. The problem was later generalised to the setting of graph theory in~\cite{ref1,ref9} by asking for the largest possible subset $S$ of vertices of a graph $G$ such that no three vertices of $S$ are contained in a common shortest path of $G$. The number of vertices in such a largest subset is called the \emph{general position number} of $G$ and is denoted by $\gp (G)$. This problem has a very large literature; for a survey see~\cite{survey}.
	
	Several variations of the general position problem have been explored, including replacing `shortest path' by `induced path'~\cite{ThoChaTui}, considering only shortest paths of bounded length~\cite{KlaRalYer}, replacing the ordinary graph distance by the Steiner distance~\cite{KlaKuzPetYer}, finding smallest maximal general position sets~\cite{lowergp}, sets of edges in general position~\cite{Manuel}, and the mutual-visibility problem, in which for any $u,v \in S$ we require only the existence of at least one shortest $u,v$-path that does not pass through a third vertex of $S$~\cite{DiStefano}. The contribution of this paper is to extend the general position problem to a different type of network, namely directed graphs, in which each link of the network is assigned a direction. 
	
	A directed graph $D$, or digraph for short, consists of a set $V(D)$ of vertices and a set of ordered pairs $A(D)$ called arcs. The \emph{order} of $D$ is $n = |V(D)|$ and the \emph{size} is $m = |A(D)|$. If there is an arc $(u,v)$ in $A(D)$ we write $u \rightarrow v$ and say that $v$ is \emph{adjacent from} $u$ and $u$ is \emph{adjacent to} $v$. We exclude loops $u \rightarrow u$ and multiple arcs $u \rightarrow v$ with the same direction, as they add nothing of interest for our problem, i.e. we consider only \emph{simple} digraphs. However we do allow 2-cycles, i.e. we may have $u \rightarrow v$ and $v \rightarrow u$. A digraph without 2-cycles is an \emph{oriented graph}. An undirected graph $G$ is a set of vertices $V(G)$ together with a collection of unordered pairs $E(G)$ called edges, and we write $u \sim v$ if $uv$ is an edge in $E(G)$.
	
	For a subset $S$ of $V(D)$ the \emph{induced subdigraph} $\langle S \rangle $ of $D$ is the digraph with vertex set $S$ with an arc $u \rightarrow v$ for $u,v \in S$ if and only if $u \rightarrow v$ in $D$. Associated with any digraph $D$ is an underlying undirected graph on the same vertex set in which there is an edge $u \sim v$ if and only if $D$ contains at least one of the arcs $u \rightarrow v$ or $v \rightarrow u$. Conversely, assigning directions to each of the edges of an undirected graph $G$ yields an \emph{orientation} $G^{\rightarrow }$ of $G$. Hence a graph is oriented if and only if it is an orientation of an undirected graph. The digraph $\hat {D}$ obtained by reversing the orientation of all arcs of $D$ is the \emph{converse} of $D$.
	
	The \emph{out-neighbourhood} $N^+(u)$ of a vertex $u$ of $D$ is defined to be the set $\{ v \in V(D):u \rightarrow v\} $ and the \emph{in-neighbourhood} $N^-(u)$ of $u$ is $\{ v \in V(D) : v\rightarrow u\} $. For $S \subseteq V(D)$ we set $N^+(S) = \bigcup _{v\in S}N^+(v)$ and $N^-(S) = \bigcup _{v\in S}N^-(v)$. The \emph{out-degree} $d^+(u)$ and \emph{in-degree} $d^-(u)$ are defined by $d^+(u) = |N^+(u)|$ and $d^-(u) = |N^-(u)|$. A vertex $u$ with out-degree $d^+(u) = 0$ is a \emph{sink}, whereas if $d^-(u) = 0$, then $u$ is a \emph{source}. A vertex $u$ is a \emph{transmitter} if $d^-(u),d^+(u) > 0$ and for any $x \in N^-(u)$ and $y \in N^+(u)$ we have $x \rightarrow y$. A vertex is \emph{extreme} if it is either a source, sink or transmitter, and we denote the set of all extreme vertices of $D$ by $\ext (D)$. A vertex that is incident with just one arc is a \emph{leaf} and the number of leaves in $D$ is $\ell (D)$. In an undirected graph the neighbourhood $N(u)$ of a vertex $u$ is $\{ v \in V(G):v \sim u\} $ and the degree $d(u)$ is $|N(u)|$. A vertex of degree 1 is a leaf and a vertex adjacent to all other vertices of $G$ is a \emph{universal vertex}. 
	
	A directed $u,v$-path in $D$ is a sequence of distinct vertices $u_0 = u,u_1,\ldots ,u_{\ell }=v$, where $u_i \rightarrow u_{i+1}$ for $0 \leq i < \ell $, and $\ell $ is the length of the path. The \emph{complete digraph} $K_n$ is the simple digraph with $n$ vertices with all $2\binom{n}{2}$ possible arcs present. An orientation of a complete graph is a \emph{tournament}. A cycle of length $\ell $ in a digraph $D$ is a sequence $u_0,u_1,\ldots ,u_{\ell -1}$ of distinct vertices such that $u_i \rightarrow u_{i+1}$ for $0 \leq i \leq \ell -2$ and $u_{\ell -1} \rightarrow u_0$. If $D$ has no cycles then it is \emph{acyclic}. The directed cycle $Z_n$ is the digraph with vertex set $\mathbb{Z}_n$ with an arc $i \rightarrow i+1$ for each $i \in \mathbb{Z}_n$. The \emph{join} $D_1 \vee D_2$ of two digraphs $D_1$ and $D_2$ is the digraph formed from the disjoint union of $D_1$ and $D_2$ by adding all possible arcs from $D_1$ to $D_2$ (note that, unlike the undirected case, the order of the join operation for digraphs is important).
	
	The distance $d(u,v)$ between two vertices is the length of a shortest directed $u,v$-path in $D$. We also call a shortest path a \emph{geodesic}. Note that we may have $d(u,v) \neq d(v,u)$, so that the directed distance is not in general a metric. If $W,W'$ are subsets of $V(D)$, then we set $d(W,W')$ to be the length of a shortest path that begins in $W$ and terminates in $W'$. If $D$ contains a directed path from $u$ to $v$, we say that $u$ \emph{reaches} $v$. If there is a pair $u,v$ for which $u$ cannot reach $v$, then we set $d(u,v) = \infty $ (and similarly for subsets $W,W'$ we set $d(W,W') = \infty $ if no $w' \in W'$ is reachable from any $w \in W$). If there is a directed walk from $u$ to $v$ for any pair of distinct vertices of $D$, then $D$ is \emph{strongly connected} or \emph{strong}; then the value of $\max \{ d(u,v):u,v \in V(D)\} $ is the \emph{diameter} $\diam (D)$ of $D$. If $D$ is not strongly connected, then the diameter of $D$ will be infinite. We will denote by $\diam ^*(D)$ the length of a longest geodesic in $D$; observe that $\diam ^*(D)$ is always finite.  
	
	A maximal strong subdigraph is a \emph{strong component} of $D$, and the strong components partition the vertex set $V(D)$. Equivalently, one can introduce an equivalence relation $R$ on $V(D)$ by setting $uRv$ if $u$ reaches $v$ and $v$ reaches $u$, and the strong components are the equivalence classes for $R$. The connected components of the underlying undirected graph of $D$ are called the \emph{weak components} of $D$. The \emph{condensation} of a digraph $D$ is the digraph with the strong components of $D$ as vertices, with an arc between strong components $S_1$ and $S_2$ if and only if there is an arc from $S_1$ to $S_2$ in $D$. Any condensation is acyclic.
	
	A subdigraph $D'$ of $D$ is a digraph such that $V(D') \subseteq V(D)$ and $A(D') \subseteq A(D)$. A subdigraph $D'$ is \emph{isometric} if for any any pair $u,v \in V(D')$ the distance from $u$ to $v$ is the same in $D'$ as it is in $D$. A digraph is \emph{transitive} if $x \rightarrow y$ and $y \rightarrow z$ implies the existence of an arc $x \rightarrow z$. The strong components of a transitive digraph are all complete digraphs. An undirected graph is a \emph{comparability graph} if it admits a transitive orientation. A subdigraph $D'$ of $D$ is \emph{convex} if for any $u,v \in V(D')$ and any shortest $u,v$-path in $D$, all vertices of $P$ lie in $D'$. A subset $S$ of the vertex set of a digraph $D$ is \emph{independent} if there are no arcs between vertices of $S$, and the \emph{independence number} $\alpha (D)$ of $D$ is the number of vertices in a largest independent set of $D$ (with a similar definition for graphs).
	
	If $a,b \in \mathbb{N}$, then we set $[a] = \{ 1, \ldots ,a\} $ and denote by $[a,b]$ the set $\{ n \in \mathbb{N}: a \leq n \leq b\} $. Any such set $[a,b]$ is an \emph{interval}. The falling factorial is $(x)_r = x(x-1)\ldots (x-(r-1))$. We will write an undirected path of order $n$ as $P_n$. If a graph $G$ has a path $P$ that passes through all vertices of $G$, then $P$ is a \emph{Hamilton path} and $G$ is \emph{traceable}. If $X, Y \subseteq V(G)$, $(X,Y)$ represents the edges of $G$ with an endpoint in $X$ and an endpoint in $Y$. A \emph{split} in a graph $G$ is a partition $V(G) = X \cup Y$ such that $G$ contains all possible edges between $X$ and $Y$.
	
	The plan of this paper is as follows. In Section~\ref{sec:preliminary} we give some simple bounds for the general position number of a digraph and characterise the structure of general position sets. Section~\ref{sec:families} explores general position sets in some well known families of digraphs. In Section~\ref{sec:orientation} we consider the general position numbers of all orientations associated with an undirected graph and give some conjectures. In Section~\ref{sec:conclusion} we conclude with some open problems.
	
	\section{Preliminary results}\label{sec:preliminary}
	
	We begin by defining the general position number for directed graphs by analogy with the undirected case. An example of a general position set is shown in Figure~\ref{fig:d2k3n20}.
	
	\begin{definition}
		For a digraph $D$, a subset $S \subseteq V(D)$ is in general position if for any $u,v \in S$, no shortest $u,v$-path in $D$ passes through a vertex in $S - \{ u,v\} $. The number of vertices in a largest general position set of $D$ is the \emph{general position number} $\gp (D)$ of $D$, and such a largest set is a \emph{$\gp $-set} of $D$.
	\end{definition}

	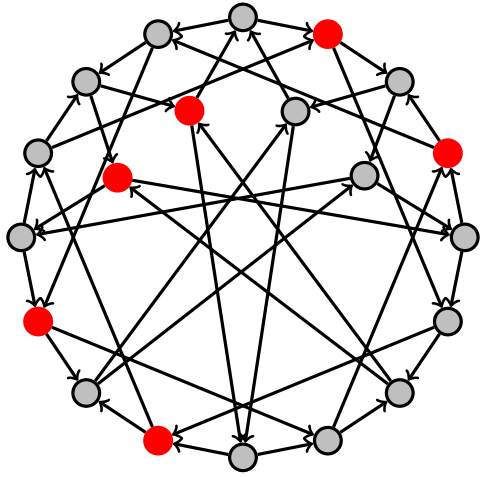
\begin{figure}
		\centering
		\begin{tikzpicture}[x=0.4mm,y=-0.4mm,inner sep=0.2mm,scale=0.25,very thick,vertex/.style={circle,draw,minimum size=10,fill=lightgray}]
			\node at (181.4,173.2) [vertex] (v1) {};
			\node at (222.6,300.4) [vertex,color=red] (v2) {};
			\node at (317.2,211.5) [vertex,color=red] (v3) {};
			\node at (95.9,379.5) [vertex] (v4) {};
			\node at (679.4,379.5) [vertex] (v5) {};
			\node at (387.6,87.8) [vertex] (v6) {};
			\node at (387.6,671.2) [vertex] (v7) {};
			\node at (118.1,267.9) [vertex] (v8) {};
			\node at (118.1,491.1) [vertex,color=red] (v9) {};
			\node at (657.2,267.9) [vertex,color=red] (v10) {};
			\node at (657.2,491.1) [vertex] (v11) {};
			\node at (499.3,110) [vertex,color=red] (v12) {};
			\node at (276,110) [vertex] (v13) {};
			\node at (499.3,649) [vertex] (v14) {};
			\node at (276,649) [vertex,color=red] (v15) {};
			\node at (181.4,585.8) [vertex] (v16) {};
			\node at (593.9,173.2) [vertex] (v17) {};
			\node at (593.9,585.8) [vertex] (v18) {};
			\node at (547.6,297.7) [vertex] (v19) {};
			\node at (456.8,212.6) [vertex] (v20) {};
			\path
			(v1) edge[->] (v2)
			(v1) edge[->] (v3)
			(v8) edge[->] (v1)
			(v13) edge[->] (v1)
			(v2) edge[->] (v4)
			(v2) edge[->] (v5)
			(v18) edge[->] (v2)
			(v3) edge[->] (v6)
			(v3) edge[->] (v7)
			(v18) edge[->] (v3)
			(v4) edge[->] (v8)
			(v4) edge[->] (v9)
			(v19) edge[->] (v4)
			(v5) edge[->] (v10)
			(v5) edge[->] (v11)
			(v19) edge[->] (v5)
			(v6) edge[->] (v12)
			(v6) edge[->] (v13)
			(v20) edge[->] (v6)
			(v7) edge[->] (v14)
			(v7) edge[->] (v15)
			(v20) edge[->] (v7)
			(v8) edge[->] (v12)
			(v15) edge[->] (v8)
			(v13) edge[->] (v9)
			(v9) edge[->] (v14)
			(v9) edge[->] (v16)
			(v10) edge[->] (v13)
			(v14) edge[->] (v10)
			(v10) edge[->] (v17)
			(v12) edge[->] (v11)
			(v11) edge[->] (v15)
			(v11) edge[->] (v18)
			(v12) edge[->] (v17)
			(v14) edge[->] (v18)
			(v15) edge[->] (v16)
			(v16) edge[->] (v19)
			(v16) edge[->] (v20)
			(v17) edge[->] (v19)
			(v17) edge[->] (v20)
			;
		\end{tikzpicture}
		\caption{A $\gp $-set in a digraph (vertices in red)}
		\label{fig:d2k3n20}
	\end{figure}
	
	The recent paper~\cite{digraphs} of Stojanovi\'{c} investigates the related mutual visibility problem in directed graphs. A subset $S \subseteq V(G)$ of a connected undirected graph $G$ is a \emph{mutual visibility set} if for any $u,v \in S$ there exists a shortest $u,v$-path that does not contain any vertex of $S-\{ u,v\} $; and the \emph{mutual visibility number} $\mu (G)$ of $G$ is the number of vertices in a largest mutual visibility set of $G$, see~\cite{DiStefano}. Any general position set of $G$ is also a mutual visibility set, and so $\gp (G) \leq \mu (G)$ for connected graphs. There are two possible definitions of a mutual visibility set for a disconnected graph: either one could define it to be the maximum mutual visibility number over the components, or else the sum of the mutual visibility number of the components. The latter definition ensures that the inequality $\gp (G) \leq \mu (G)$ continues to hold for disconnected graphs. A similar issue arises in directed graphs that are not strongly connected, and we now distinguish the two possible definitions.
	
	\begin{definition}
		\text{ }
		\begin{itemize}
			\item A subset $S \subseteq V(D)$ of a digraph is a \emph{strong mutual visibility set} if for each ordered pair $u,v \in S$ there exists a shortest $u,v$-path in $D$ that does not contain any vertices of $S-\{ u,v\} $.
			\item A subset $S \subseteq V(D)$ is a \emph{weak mutual visibility set} if for each ordered pair $u,v \in S$ either $d(u,v) = \infty $ or there exists a shortest $u,v$-path that does not contain any vertices of $S- \{ u,v\} $.
		\end{itemize}
	\end{definition}
	The `strong' definition is that of Stojanovi\'{c}. For our purposes we prefer the `weak' definition, since it ensures that the inequality $\gp (D) \leq \mu (D)$ holds for all digraphs. Also the `strong' mutual visibility number can be found by taking the maximum of the `weak' mutual visibility number over the strong components of $D$. Observe that the two definitions coincide for strong digraphs. We will not treat mutual visibility further here, except for the following realisation result to compare the two parameters. Note that as the constructions are strong we do not distinguish between strong and weak.
	
	\begin{theorem}
		For any $b \geq a \geq 2$ there exists a strong digraph $D$ with $\gp (D) = a$ and $\mu (D) = b$. 
	\end{theorem}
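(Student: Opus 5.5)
The plan is to give an explicit family of strong digraphs $D_{a,b}$ and compute both parameters directly. Two simple observations set up the tools. First, if $G$ is an undirected connected graph and $\overleftrightarrow{G}$ is its symmetric digraph (each edge replaced by a 2-cycle), then directed geodesics in $\overleftrightarrow{G}$ are exactly the undirected geodesics of $G$. Hence $\gp(\overleftrightarrow{G})=\gp(G)$ and $\mu(\overleftrightarrow{G})=\mu(G)$, and $\overleftrightarrow{G}$ is strong. Second, the directed cycle $Z_n$ ($n\ge 3$) has unique geodesics, and for any three vertices $u,v,w$ in cyclic order the unique $u,w$-geodesic passes through $v$. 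So $\gp(Z_n)=\mu(Z_n)=2$, and the case $a=2$ must be handled by a genuinely directed construction.

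The construction I would try starts from a ``hub'' $c$ with 2-cycles $c\leftrightarrow x_i$ to $b$ vertices $X=\{x_1,\dots ,x_b\}$. In this digraph every $x_i,x_j$-geodesic is $x_i c x_j$, so $X$ is in general position. I would then add arcs among the $x_i$ so that, for every ordered pair, the geodesic through $c$ survives (which keeps $X$ a mutual-visibility set, giving $\mu\ge b$). At the same time these arcs should create alternative geodesics through other vertices of $X$, which destroys general position for large subsets. Concretely, I would add a family of arcs $x_i\to x_j$ for which $d(x_i,x_j)=2$ is realised both by $x_ic x_j$ and by $x_i x_k x_j$ for many $k$. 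The aim is that any three vertices of $X$ lying in a suitable configuration fail general position, so that a general position set meets $X$ in at most $a-1$ or $a$ vertices. An additional clique or pendant gadget of size tuned to $a$ would then make $\gp(D)=a$ exactly. To handle $a=2$, I would replace the 2-cycles at $c$ by a long directed cycle through $c$, so that all geodesics are forced around the cycle, as in $Z_n$.

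The key steps, in order, are as follows. (1) Fix the digraph and verify that it is strong. (2) Compute all distances, in particular listing, for each ordered pair in $X\cup\{c\}$, every geodesic. (3) Exhibit $X$ as a mutual-visibility set, giving $\mu\ge b$. Show $\mu\le b$ by arguing that any mutual-visibility set contains at most one vertex outside $X$ in a way that forces $|S|\le b$; if the upper bound fails, I would adjust the gadget so that the extra vertices cannot be added. (4) Exhibit a general position set of size $a$. (5) Prove the upper bound $\gp(D)\le a$ by a case analysis on how many vertices of a general position set $S$ lie in $X$, in the gadget, or equal $c$.

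I expect step (5) to be the main obstacle. The added arcs must be dense enough that no $a+1$ vertices of $X$ are in general position, yet sparse enough that no ordered pair of $X$ loses its geodesic avoiding $X$. Balancing these two requirements uniformly in $a$ and $b$ is the delicate part. My first attempt is to take the arcs $x_i\to x_j$ to be a transitive tournament on $X$. I would then check whether the backward pairs really gain geodesics through $X$. If they do not, as a first check suggests, I would switch to a blow-up of $Z_3$ (parts $A\to B\to C\to A$) with $X$ placed in one part, where many parallel geodesics exist, and tune the part sizes to $a$ and $b$.
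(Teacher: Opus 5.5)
There is a genuine gap. The statement is an existence claim, so the construction is the whole proof, and you never fix one. The two concrete candidates you name cannot work.

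In your hub digraph, any two vertices of $X$ are at distance at most $2$ via $c$. Adding arcs or gadgets cannot increase this. So for $S\subseteq X$, every geodesic between two vertices of $S$ has length at most $2$. It follows that $S$ is in general position if and only if $\langle S\rangle$ is transitive.

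Now apply Ramsey's theorem, colouring pairs of $X$ by whether they carry zero, one or two arcs. Once $b$ is large compared with $a$, $\langle X\rangle$ contains one of the following:
\begin{itemize}
\item an independent set on $a+1$ vertices;
\item a complete subdigraph on $a+1$ vertices;
\item a tournament on $2^{a}$ vertices, which contains a transitive subtournament on $a+1$ vertices.
\end{itemize}
Each of these is a general position set. So your step (5) is impossible, for example for $a=3$ and $b$ large, and no choice of arcs among the $x_i$ can rescue it.

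Your fallback fails more directly. Vertices placed in one part of a blow-up of $Z_3$ have identical in- and out-neighbourhoods. A twin set is always in general position, so this gives $\gp\ge b$. In addition, the $a=2$ variant (``a long directed cycle through $c$'') and the gadget that tunes $a$ are never specified, and the bound $\mu\le b$ is left conditional.

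The missing idea is to make geodesics long, so they are forced through many chosen vertices, while giving every geodesic a parallel detour. The paper takes $K(r)$, the blow-up of a directed cycle in which each vertex is doubled to a layer $\{u_i,v_i\}$, with all four arcs going to the next layer.
\begin{itemize}
\item Any three vertices lie on a common geodesic, so $\gp=2$. Three vertices in distinct layers lie on the geodesic from the first to the last in cyclic order. The vertices $u_i,v_i$ are joined only by geodesics through every other layer, and such a geodesic can be chosen to pass through any given third vertex.
\item One vertex from each layer is a mutual-visibility set, because every geodesic can switch to the other vertex of each intermediate layer.
\end{itemize}
With $b\ge 3$ layers this gives $\gp=2$ and $\mu=b$.

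For $a\ge 3$, the paper collapses one layer to a single vertex $x$ and attaches a twin gadget $x\to z_k\to y\to x$ for $1\le k\le s$. This yields $\gp(K(r,s))=s+2$ and $\mu(K(r,s))=r+s$, so $K(b-a+2,a-2)$ works. The complete digraph $K_a$ handles $a=b$. Your blow-up idea points in the right direction, but $X$ must be a transversal of many layers of size two, not one large layer.
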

	\begin{proof}
		As noted above, we have $\gp (D) \leq \mu (D)$ for any digraph. If $a = b$, then the complete digraph $K_a$ suffices. When $b > a$ we use the following construction. Let $K(r)$ be the digraph with vertex set $\{ u_i,v_i : i \in \mathbb{Z}_{r+1}\}$ with all four arcs from $\{ u_i,v_i\} $ to $\{ u_{i+1},v_{i+1}\} $ for $i \in \mathbb{Z}_{r+1}$. For $a \geq 2$ we have $\gp (K(a)) = 2$ and $\mu (K(a)) = a$, so we can assume that $a \geq 3$. Now consider the complete tripartite graph $K_{s,1,1}$ with partite set $\{ x\} $, $\{ y\} $ and $Z = \{ z_1,\ldots ,z_s\} $, where $s \geq 1$.  Orient the edges in $(x,Z)$ towards $Z$, the edges in $(y,Z)$ towards $y$ and the edge $xy$ in the direction $y \rightarrow x$. Finally, for $r \geq 2, s \geq 1$ the graph $K(r,s)$ is formed by identifying the vertex $x$ and the two vertices $u_0,v_0$ from $K(r)$ into a single vertex. An example can be seen in Figure~\ref{fig:K(r,s)}. It is easily verified that $\gp (K(r,s)) = 2+s$ and $\mu (K(r,s)) = r+s$. Hence for $3 \leq a < b$, the digraph $K(b-a+2,a-2)$ has the required parameters. 
	\end{proof}
	
	We note that, whilst showing the existence of an undirected graph with $\gp (G) = a$ and $\mu (G) = b$ for $4 \leq a \leq b$ is not difficult, it is an open problem whether there exist undirected graphs with general position number 3 and arbitrarily large mutual visibility number, so it is interesting that the proof is simple in the directed setting.
	
	\begin{figure}
		\centering
		\begin{tikzpicture}[middlearrow=stealth,x=0.2mm,y=-0.2mm,inner sep=0.2mm,scale=1.4,very thick,vertex/.style={circle,draw,minimum size=10,fill=lightgray}]
			\node at (86.6,50) [vertex,color=blue] (v1) {};
			\node at (0,100) [vertex] (v2) {};
			\node at (-86.6,50) [vertex,color=blue] (v3) {};
			\node at (-86.6,-50) [vertex,color=blue] (v4) {};
			\node at (0,-100) [vertex,color=blue] (v5) {};
			\node at (86.6,-50) [vertex,color=blue] (v6) {};
			
			\node at (51.96,30) [vertex] (u1) {};
			
			\node at (-51.96,30) [vertex] (u3) {};
			\node at (-51.96,-30) [vertex] (u4) {};
			\node at (0,-60) [vertex] (u5) {};
			\node at (51.96,-30) [vertex] (u6) {};
			
			\node at (-100,140) [vertex,color=blue] (z1) {};
			\node at (-50,140) [vertex,color=blue] (z2) {};
			\node at (0,140) [vertex,color=blue] (z3) {};
			\node at (50,140) [vertex,color=blue] (z4) {};
			\node at (100,140) [vertex,color=blue] (z5) {};
			
			\node at (0,180) [vertex] (x) {};
			
			\path
			
			(z1) edge[middlearrow] (x)
			(z2) edge[middlearrow] (x)
			(z3) edge[middlearrow] (x)
			(z4) edge[middlearrow] (x)
			(z5) edge[middlearrow] (x)
			
			(x) edge[middlearrow,bend left] (v2)
			
			(v2) edge[middlearrow] (z1)
			(v2) edge[middlearrow] (z2)
			(v2) edge[middlearrow] (z3)
			(v2) edge[middlearrow] (z4)
			(v2) edge[middlearrow] (z5)
			
			(u1) edge[middlearrow] (v2)
			
			(v1) edge[middlearrow] (v2)

			(v2) edge[middlearrow] (u3)
			(v2) edge[middlearrow] (v3)
			
			(u3) edge[middlearrow] (u4)
			(u3) edge[middlearrow] (v4)
			(v3) edge[middlearrow] (u4)
			(v3) edge[middlearrow] (v4)
			
			(u4) edge[middlearrow] (u5)
			(u4) edge[middlearrow] (v5)
			(v4) edge[middlearrow] (u5)
			(v4) edge[middlearrow] (v5)
			
			(u5) edge[middlearrow] (u6)
			(u5) edge[middlearrow] (v6)
			(v5) edge[middlearrow] (u6)
			(v5) edge[middlearrow] (v6)
			
			(u6) edge[middlearrow] (u1)
			(u6) edge[middlearrow] (v1)
			(v6) edge[middlearrow] (u1)
			(v6) edge[middlearrow] (v1)
			
			;
		\end{tikzpicture}
		\caption{The digraph $K(5,5)$ (with a largest mutual visibility set in blue)}
		\label{fig:K(r,s)}
	\end{figure}
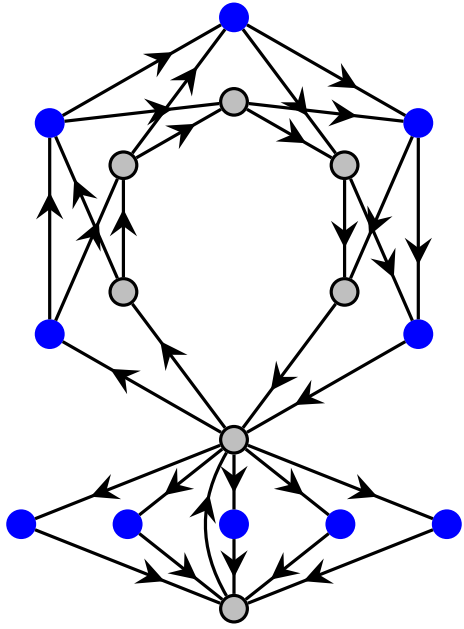
	
	We now give a trivial upper and lower bound on the general position number of digraphs. 
	
	\begin{proposition}\label{prop1}
		If $D$ is a digraph with order $n \geq 2$, then $2 \leq \gp (D) \leq n$. 
	\end{proposition}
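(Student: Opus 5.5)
The upper bound $\gp(D) \leq n$ is immediate, since a general position set is a subset of $V(D)$. So the plan concentrates on the lower bound. I will show that any two distinct vertices $u,v$ of $D$ form a general position set; since $n \geq 2$, such a pair exists.

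The key step is to check the definition for $S=\{u,v\}$. For any ordered pair of vertices $x,y \in S$, the set $S-\{x,y\}$ is empty whenever $x \neq y$. Therefore no shortest $x,y$-path can pass through a vertex of $S-\{x,y\}$. If $d(x,y)=\infty$, there are no shortest paths at all, and the condition holds vacuously. The definition does not require reachability, so this case needs no special treatment: it applies even to digraphs that are not strong, or that have no arcs.

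The only point to be careful about is the degenerate reading of the definition when $u=v$, which is presumably excluded by the definition's intent. Even under that reading, a shortest $u,u$-path is the trivial path, and it contains no vertex of $S-\{u\}$ other than $u$ itself. So the condition is not violated. Hence $\{u,v\}$ is a general position set, giving $\gp(D)\geq 2$. I expect no real obstacle here; the statement is essentially a direct unpacking of the definition.
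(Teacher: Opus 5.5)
Your proof is correct and is the same direct unpacking of the definition the paper relies on; the paper calls these bounds trivial and gives no proof. The upper bound holds because a general position set is a subset of $V(D)$, and the lower bound holds because any two distinct vertices $u,v$ satisfy the condition vacuously, since $\{u,v\}-\{u,v\}=\emptyset$ (your remark about the degenerate case $u=v$ is harmless but not needed).
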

	
	There are very few undirected graphs with general position number 2 ($C_4$, $2K_1$ and the path graphs $P_n$ for $n \geq 2$), but, as we will see in Section~\ref{sec:orientation}, any graph with a Hamilton path has an orientation with general position number 2. Similarly, an undirected graph $G$ satisfies $\gp (G) = n$ if and only if $G$ is a complete graph or disjoint union of complete graphs, but whilst undirected graphs meeting the upper bound are scarce, there are many digraphs in which all vertices are contained in a $\gp $-set.
	
	\begin{proposition}\label{prop:gpsetsaretransitive}
		If $S$ is a general position set of a digraph $D$, then the induced subdigraph $\langle S\rangle$ is transitive.
	\end{proposition}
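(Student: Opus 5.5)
We argue directly from the definition of a general position set. Suppose $x,y,z \in S$ with $x \rightarrow y$ and $y \rightarrow z$ in $\langle S\rangle$; we must show $x \rightarrow z$. Since induced subdigraphs have no loops, we first dispose of the degenerate case $x = z$. Here $x \rightarrow y \rightarrow x$ is a 2-cycle, and transitivity would demand a loop $x \rightarrow x$. We adopt the convention, standard for simple digraphs, that transitivity is only required for distinct $x,z$. Equivalently, the condition on $x \rightarrow y \rightarrow z$ is imposed only for three distinct vertices. So assume $x,y,z$ are pairwise distinct.

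Since $x \rightarrow y$ and $y \rightarrow z$ are arcs of $D$, there is a directed $x,z$-walk of length $2$ through $y$. Because the three vertices are distinct, this walk is a directed path, and hence $d(x,z) \leq 2$. We also have $d(x,z) \geq 1$ as $x \neq z$.

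Suppose for contradiction that $x \not\rightarrow z$. Then $d(x,z) = 2$, so the path $x,y,z$ is a shortest $x,z$-path in $D$. This geodesic passes through $y \in S - \{x,z\}$, contradicting that $S$ is in general position. Hence $d(x,z)=1$, that is, $x \rightarrow z$. Both $x$ and $z$ lie in $S$, so this arc belongs to the induced subdigraph $\langle S\rangle$.

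There is no real obstacle here. The only point requiring care is the convention for $x=z$ noted at the start, where a 2-cycle inside $S$ must not be counted as a violation of transitivity.
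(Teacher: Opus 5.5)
Your proof is correct and is essentially the paper's argument: if the arc $x \rightarrow z$ were missing, then $x,y,z$ would be a geodesic through three vertices of $S$. Your explicit handling of the case $x=z$ matches the convention the paper uses without stating it in the proof, since it treats complete digraphs, which contain 2-cycles, as transitive.
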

	\begin{proof}
		If $\langle S\rangle$ is not transitive, then $S$ contains vertices $x,y,z$ such that $D$ has arcs $x \rightarrow y$ and $y \rightarrow z$, but does not contain the arc $x \rightarrow z$. This would imply that the directed path $x,y,z$ is a shortest path in $D$ containing three vertices of $S$, a contradiction.
	\end{proof}
	
	\begin{corollary}\label{cor2.5}
		For any digraph $D$ with order $n \geq 2$, $\gp (D) = n$ if and only if $D$ is transitive.
	\end{corollary}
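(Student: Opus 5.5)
The plan is to prove the two directions separately, with Proposition~\ref{prop:gpsetsaretransitive} doing almost all of the work in the forward direction.

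\emph{Forward direction.} Suppose $\gp (D) = n$. Then $V(D)$ itself is a general position set. Since $\langle V(D)\rangle = D$, Proposition~\ref{prop:gpsetsaretransitive} immediately gives that $D$ is transitive.

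\emph{Converse direction.} Suppose $D$ is transitive. We will show that every shortest path in $D$ has length at most $1$.
\begin{itemize}
\item Let $u_0,u_1,\ldots ,u_\ell$ be a shortest $u_0,u_\ell$-path with $\ell \geq 2$.
\item Then $u_0 \rightarrow u_1$ and $u_1 \rightarrow u_2$, so transitivity gives the arc $u_0 \rightarrow u_2$.
\item This yields the path $u_0,u_2,\ldots ,u_\ell$ of length $\ell -1$, contradicting minimality.
\end{itemize}
The vertices $u_0$ and $u_2$ are distinct, because paths consist of distinct vertices, so the new arc is not a loop.

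Consequently, for any $u,v \in V(D)$, every shortest $u,v$-path is either the single arc $u \rightarrow v$ or does not exist. Such a path contains no vertex other than $u$ and $v$. Hence $V(D)$ is in general position, and $\gp (D) \geq n$. Combined with the upper bound $\gp (D) \le n$ of Proposition~\ref{prop1}, this gives $\gp (D) = n$.

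There is no real obstacle here. The only point needing care is the contraction argument above, namely that $u_0 \neq u_2$ so that the new arc $u_0 \rightarrow u_2$ is not a loop, which holds since path vertices are distinct. The hypothesis $n \geq 2$ is only needed so that Proposition~\ref{prop1} applies.
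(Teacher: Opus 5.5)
Your proposal is correct and follows essentially the same argument as the paper. The forward direction applies Proposition~\ref{prop:gpsetsaretransitive} to $S = V(D)$. The converse observes that a transitive digraph has no geodesic of length greater than $1$; the paper leaves the shortcut argument for this implicit, and you spell it out.
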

	\begin{proof}
		That $\gp (D) = n$ implies that $D$ is transitive follows from Proposition~\ref{prop:gpsetsaretransitive}. Conversely, if $D$ is transitive, then $D$ has no geodesic with length greater than 1, and so $V(D)$ is in general position. 
	\end{proof}
	
	We now provide a couple of elementary observations on general position sets in a digraph $D$, before building up to a characterisation of the structure of general position sets in digraphs. Firstly, no extreme vertex of $D$ can be an internal vertex of a geodesic of $D$, so that the set of all extreme vertices of $D$ is in general position.
	
	\begin{lemma}\label{lem:ext}
		For any digraph $D$, $\gp (D) \geq |\ext (D)|$.
	\end{lemma}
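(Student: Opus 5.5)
The plan is to show directly that $\ext (D)$ is itself a general position set; the inequality $\gp (D) \geq |\ext (D)|$ then follows from the definition of $\gp (D)$. Suppose, for a contradiction, that $u,v \in \ext (D)$ and that some shortest $u,v$-path $P = u_0,u_1,\ldots ,u_\ell$ (with $u_0=u$, $u_\ell = v$) contains a vertex $w \in \ext (D) - \{u,v\}$. Since the vertices of a path are distinct and $w \neq u,v$, we have $w = u_i$ for some $0 < i < \ell$. Thus $w$ is an internal vertex of a geodesic, and the whole argument reduces to showing that no extreme vertex can occupy such a position.

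I will split according to the type of $w$. First, if $w$ is a source, then $d^-(w)=0$, which contradicts the arc $u_{i-1} \rightarrow w$ on $P$. Second, if $w$ is a sink, then $d^+(w) = 0$, which contradicts the arc $w \rightarrow u_{i+1}$. Third, if $w$ is a transmitter, then $u_{i-1} \in N^-(w)$ and $u_{i+1} \in N^+(w)$, so by the definition of a transmitter there is an arc $u_{i-1} \rightarrow u_{i+1}$. The vertices $u_{i-1}$ and $u_{i+1}$ are distinct because $P$ is a path. Replacing the segment $u_{i-1},w,u_{i+1}$ by this arc therefore gives a directed $u,v$-walk of length $\ell - 1$, and hence $d(u,v) \leq \ell -1$. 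This contradicts the assumption that $P$ is a shortest path.

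In all three cases we reach a contradiction, so no shortest path between two vertices of $\ext (D)$ passes through a third vertex of $\ext (D)$. Hence $\ext (D)$ is in general position. There is no real obstacle here; the only point needing care is the transmitter case, where we use that $u_{i-1} \neq u_{i+1}$ (true because vertices on a path are distinct, and needed since there are no loops) and that a shorter walk contains a shorter path. Finally, if $\ext (D)$ is empty or has size one, the inequality is trivial, consistent with Proposition~\ref{prop1}.
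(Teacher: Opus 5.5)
Your proof is correct and follows the paper's argument: the paper's entire justification is the remark that no extreme vertex can be an internal vertex of a geodesic, so $\ext (D)$ is in general position. Your source/sink/transmitter case analysis simply writes out that observation in full.
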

	
	We can sometimes improve on the bound of Lemma~\ref{lem:ext} using the neighbours of extreme vertices. We define a \emph{near-sink} of $D$ to be a vertex that is not a sink, but all of its out-neighbours are sinks, and similarly a vertex is a \emph{near-source} if it is not a source, but all of its in-neighbours are sources. By $\sink (D)$, $\nsink (D)$, $\source (D)$ and $\nsource (D)$ we denote respectively the sets of sinks, near-sinks, sources, and near-sources of $D$. It is easily seen that the following unions are in general position.
	
	\begin{lemma}\label{lem:sourcesandsinks}
		For any digraph $D$, the sets $\nsink (D) \cup \sink (D)$ and $\nsource (D) \cup \source (D)$ are in general position.
	\end{lemma}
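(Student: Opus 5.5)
The plan is to treat $\nsink (D) \cup \sink (D)$ directly; the statement for $\nsource (D) \cup \source (D)$ then follows by applying the same argument to the converse $\hat{D}$. Reversing every arc swaps sources with sinks and near-sources with near-sinks, and it maps shortest $u,v$-paths of $D$ bijectively onto shortest $v,u$-paths of $\hat{D}$. Hence a set is in general position in $D$ if and only if it is in general position in $\hat{D}$.

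Write $S = \nsink (D) \cup \sink (D)$, and suppose for contradiction that some geodesic $P = u_0,u_1,\ldots ,u_\ell$ with $u_0,u_\ell \in S$ has an internal vertex $u_i \in S$, where $0<i<\ell$. The key step is to see where such a vertex can sit on $P$. An internal vertex has an out-neighbour on $P$, so it cannot be a sink. So $u_i$ is a near-sink, and its successor $u_{i+1}$ is one of its out-neighbours, hence a sink. A sink has no out-neighbours, so it can only be the final vertex of a path. This forces $i+1=\ell$.

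It remains to handle the start of $P$. Since $\ell > i \geq 1$, the start $u_0 \neq u_\ell$ has the out-neighbour $u_1$ on $P$. So $u_0$ is not a sink, and therefore it is a near-sink, which makes $u_1$ a sink. By the same reasoning as before, $u_1 = u_\ell$, so $\ell = 1$. But this contradicts $\ell \geq 2$, which we need for $P$ to have an internal vertex. Therefore no geodesic between two vertices of $S$ passes through a third vertex of $S$, and $S$ is in general position.

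There is no real obstacle here; the only point needing care is the endpoint $u_0$, where one must use that it lies in $S$ and has an out-neighbour on $P$. Note that the argument uses only the fact that a sink must be the terminal vertex of any path, and not that $P$ is shortest. So it in fact shows that no directed path at all contains three vertices of $S$.
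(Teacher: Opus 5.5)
Your proof is correct; the paper gives no proof beyond saying the claim is ``easily seen'', and your argument is the natural verification behind that remark: a directed path starting at a vertex of $\nsink(D)\cup\sink(D)$ has length at most one, and the source version follows by passing to the converse. Your step about the internal vertex $u_i$ is in fact redundant, since the endpoint argument alone gives the contradiction: $u_0$ has the out-neighbour $u_1$, so $u_0$ is a near-sink, $u_1$ is a sink, and hence $\ell = 1$.
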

	
	If $S$ is in general position in $D$, then it remains in general position if we reverse all of the arcs.
	
	\begin{observation}\label{obs 1}
		Let $D$ be a digraph and let $\hat{D}$ be its converse. Then $\gp(D) = \gp(\hat{D})$.
	\end{observation}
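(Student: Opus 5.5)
The plan is to show that the converse operation carries general position sets of $D$ to general position sets of $\hat{D}$, and then to use the symmetry of the converse operation.

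The key step is a correspondence between paths. A sequence $u_0,u_1,\ldots ,u_\ell$ is a directed $u,v$-path in $D$ (with $u_0=u$, $u_\ell=v$) if and only if the reversed sequence $u_\ell ,\ldots ,u_0$ is a directed $v,u$-path in $\hat{D}$. This holds because $x \rightarrow y$ in $D$ exactly when $y \rightarrow x$ in $\hat{D}$. The correspondence is a bijection that preserves length and the set of vertices visited. Consequently $d_D(u,v) = d_{\hat{D}}(v,u)$ for all $u,v$, including the case where the value is $\infty$. It follows that the bijection sends shortest $u,v$-paths of $D$ exactly onto shortest $v,u$-paths of $\hat{D}$, again with the same vertex sets.

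Now let $S$ be in general position in $D$. Take any $u,v \in S$ and any shortest $u,v$-path $P$ in $\hat{D}$. Its reversal is a shortest $v,u$-path in $D$ with the same vertex set. Since $S$ is in general position in $D$, that path contains no vertex of $S-\{u,v\}$, and therefore neither does $P$. So $S$ is in general position in $\hat{D}$, which gives $\gp(D) \leq \gp(\hat{D})$.

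For the reverse inequality, note that the converse of $\hat{D}$ is $D$ itself. Applying the same argument with the roles of $D$ and $\hat{D}$ exchanged gives $\gp(\hat{D}) \leq \gp(D)$, and the two inequalities together give equality. There is no real obstacle here. The only point needing care is that the definition of general position quantifies over ordered pairs, so the reversal swaps the roles of $u$ and $v$. Since the condition is imposed for every ordered pair, this swap is harmless.
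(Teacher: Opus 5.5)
Your proposal is correct and matches the paper's proof: both rest on the fact that reversing a shortest $u,v$-path in $\hat{D}$ yields a shortest $v,u$-path in $D$ with the same vertex set. The paper phrases this as a contradiction and leaves the reverse inequality (via $\hat{\hat{D}} = D$) implicit, whereas you state it explicitly.
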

	\begin{proof}
		Let $S$ be a general position set in $D$ and suppose that $S$ is not in general position in $\hat{D}$. Then there are vertices $u,v,w \in V(D)$ and a shortest $u,v$-path $P$ in $\hat{D}$ that passes through $w$. Then the converse $\hat{P}$ of $P$ would be a shortest $v,u$-path in $D$ passing through $w$, a contradiction.
	\end{proof}

	\begin{lemma}\label{lem:diambound}
		The general position number of any digraph $D$ with order $n$ is bounded above by \[ \gp (D) \leq n-\diamm(D)+1.\]
	\end{lemma}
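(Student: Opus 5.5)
Let $d = \diamm(D)$ and fix a geodesic $P = u_0,u_1,\ldots,u_d$ of length $d$ in $D$; this is a shortest $u_0,u_d$-path with $d+1$ vertices. The plan is to show that any general position set $S$ contains at most two vertices of $P$. Then $|S| \leq (n-(d+1)) + 2 = n-d+1$, and taking $S$ to be a $\gp$-set gives the bound.

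The key step is the claim that $|S \cap V(P)| \leq 2$. Suppose $S$ contains three vertices $u_i,u_j,u_k$ of $P$ with $i<j<k$. Since $P$ is a geodesic, its subpath $u_i,u_{i+1},\ldots,u_k$ is also a geodesic: if there were a shorter $u_i,u_k$-path, splicing it with $u_0,\ldots,u_i$ and $u_k,\ldots,u_d$ would give a $u_0,u_d$-walk shorter than $d$. That walk contains a $u_0,u_d$-path of length less than $d$, contradicting $d(u_0,u_d)=d$. So $u_i,\ldots,u_k$ is a shortest $u_i,u_k$-path passing through $u_j \in S-\{u_i,u_k\}$, contradicting the definition of general position.

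The only point needing care is the subpath argument, and it is the standard directed version of ``subpaths of geodesics are geodesics''. Asymmetry of distance causes no trouble, since we only use distances in the forward direction along $P$. The small cases are also fine. If $d = 0$, which happens only when $D$ has no arcs, the bound reads $\gp(D) \leq n+1$ and holds trivially. If $d=1$, the bound $\gp(D) \leq n$ is Proposition~\ref{prop1}. So the argument is uniform, and I expect no real obstacle.
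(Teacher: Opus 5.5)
Your proposal is correct and follows the paper's proof: take a longest geodesic $u_0,\ldots,u_{\diamm(D)}$ and observe that a general position set meets it in at most two vertices. You also spell out that subpaths of a directed geodesic are geodesics and check the trivial small cases, both of which the paper leaves implicit.
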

	\begin{proof}
		Let $u_0,u_1,\ldots ,u_{\diamm(D)}$ be a longest geodesic in $D$. Any general position set $S$ can contain at most two vertices from this path, so there are at least $\diam ^*(D)-1$ vertices of the path that do not lie in $S$.
	\end{proof}
	
	\begin{corollary}\label{cor:orientedgpatmostn-2}
		The general position number of a strong oriented graph $D$ with order $n \geq 4$ is at most $n-2$.
	\end{corollary}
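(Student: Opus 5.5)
The plan is to apply Lemma~\ref{lem:diambound}: it suffices to show that a strong oriented graph $D$ of order $n \geq 4$ contains a geodesic of length at least $3$, i.e.\ $\diam^*(D) \geq 3$. Then $\gp(D) \leq n - 3 + 1 = n-2$. Since $D$ is strong, $\diam^*(D) = \diam(D)$, so we must show that a strong oriented graph on at least $4$ vertices cannot have diameter at most $2$. This is false in general, though: there are tournaments of diameter $2$ (for instance, the rotational tournament on $\mathbb{Z}_7$ with connection set $\{1,2,4\}$). The argument must therefore be reorganised.

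Instead, I would show directly that no general position set has size $n-1$. Corollary~\ref{cor2.5} already excludes $\gp(D)=n$, since a strong oriented graph with $n \geq 3$ is not transitive. Indeed, in a transitive digraph every strong component is complete, and a complete digraph on $n \geq 2$ vertices has $2$-cycles, which an oriented graph does not.

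So suppose $S = V(D) - \{w\}$ is in general position. By Proposition~\ref{prop:gpsetsaretransitive}, $\langle S\rangle$ is transitive. Since $D$ is oriented, the strong components of $\langle S\rangle$ are complete digraphs without $2$-cycles, hence single vertices, and $\langle S\rangle$ is an acyclic transitive oriented graph. Because $n-1 \geq 3$, $S$ contains a source $a$ of $\langle S\rangle$. As $D$ is strong, $a$ has an in-neighbour in $D$, and this must be $w$. Symmetrically, a sink $b$ of $\langle S\rangle$ satisfies $b \rightarrow w$.

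Every vertex of $S$ that is not reachable within $\langle S\rangle$ from another vertex of $S$ has an in-arc from $w$. Now take any $x \in S$ with $x \rightarrow w$ and $y \in S$ with $w \rightarrow y$, $y \neq x$. The path $x,w,y$ has internal vertex $w \notin S$, so it causes no violation. The violation must come from elsewhere: consider a geodesic from $b$ to some vertex of $S$. Let $b$ be a sink of $\langle S\rangle$ and $c \in S - \{b\}$. Every $b,c$-path leaves $b$ only through $w$, so $d(b,c) = 1 + d(w,c)$. Choose $c$ with $d(w,c) \geq 2$ if one exists. Then a shortest $w,c$-path has its second vertex $s \in S$, and $b,w,s,\ldots,c$ is a $b,c$-geodesic through $s \in S - \{b,c\}$, a contradiction. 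Otherwise $w \rightarrow c$ for all $c \in S - \{b\}$, and $b \rightarrow w$.

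In that last case I would take a source $a$ with $w \rightarrow a$. The vertex $a$ is entered only from $w$, and every $c \in S - \{a\}$ reaches $a$ only via $w$. Take $c \neq b$ with $c \rightarrow w$ if possible; but such a $c$ would form a $2$-cycle with $w$, which is impossible. So $b$ is the only in-neighbour of $w$. Then for $c \in S - \{a,b\}$ (which exists since $n \geq 4$) we have $d(c,a) \geq 3$, and any $c,a$-geodesic passes through $b \in S$, a contradiction unless $c = b$. The main obstacle is handling this final case analysis cleanly, and in particular the degenerate situation where $a = b$ is impossible because $|S| \geq 3$.
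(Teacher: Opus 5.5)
Your argument is correct, but it takes a different route from the paper. The paper keeps the diameter reduction. If $\diam(D)\ge 3$ it applies Lemma~\ref{lem:diambound}, so it only has to treat $\diam(D)=2$. There it assumes $S=V(D)-\{x\}$ is in general position and looks at distance layers from $x$. Since $n\ge 4$, some $y\in\{x\}\cup N^+(x)$ has two out-neighbours $y_1,y_2$ in the next layer, and one may take $y_1\not\rightarrow y_2$. Diameter $2$ then forces the unique $y_1,y_2$-geodesic to be $y_1,x,y_2$, which contradicts either orientedness (if $y=x$) or $d(x,y_2)=2$ (if $x\rightarrow y$).

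You avoid the diameter split entirely. Since $\langle S\rangle$ is acyclic, a sink $b$ of $\langle S\rangle$ has $N^+(b)=\{w\}$ and a source $a$ has $N^-(a)=\{w\}$. Either some $c$ with $d(w,c)\ge 2$ yields a $b,c$-geodesic through a third vertex of $S$, or $w$ dominates $S-\{b\}$. In the second case $N^-(w)=\{b\}$, so every path from $c\in S-\{a,b\}$ to $a$ runs through $b$. The paper's proof is shorter because Lemma~\ref{lem:diambound} absorbs every large-diameter case and what remains is a local counting argument. Yours is self-contained, works uniformly for all diameters, and shows exactly where $n\ge 4$ (the existence of $c\notin\{a,b\}$) and orientedness are used.

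Three small points need tidying.
\begin{itemize}
\item The fact $a\neq b$ does not follow from $|S|\ge 3$, since an acyclic $\langle S\rangle$ could have isolated vertices. It follows instead because a vertex that is both a source and a sink of $\langle S\rangle$ would give $w\rightarrow a\rightarrow w$, a $2$-cycle.
\item In the first case you should state why $b$ is not on the chosen $w,c$-geodesic, which you need both for $b,w,s,\ldots,c$ to be a path and for $s\neq b$. The reason is that $b$ cannot be an internal vertex, because its only out-neighbour is $w$.
\item The appeal to Corollary~\ref{cor2.5} is redundant, since subsets of general position sets are in general position. The aside about the path $x,w,y$ can also be deleted.
\end{itemize}
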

	\begin{proof}
		If $D$ has diameter greater than or equal to $3$, then the result follows from Lemma~\ref{lem:diambound}, so we can assume that $\diam (D) = 2$. For $n \geq 2$, a strong transitive digraph is complete, so by Corollary~\ref{cor2.5} we have $\gp (D) \leq n-1$. Suppose that $S = V(D)-\{ x\} $ is a general position set of order $n-1$ of such a digraph. As $n \geq 4$ and $\diam (D) = 2$, there is a vertex $y \in \{ x\} \cup N^+(x)$ with at least two out-neighbours $y_1,y_2$ at distance $d(x,y)+1$ from $x$. As there is at most one arc between $y_1$ and $y_2$, we can assume that $y_1 \not \rightarrow y_2$, and so to avoid having three vertices of $S$ on a shortest $y_1,y_2$-path, the unique shortest $y_1,y_2$-path must be $y_1,x,y_2$. If $y = x$ this is impossible, and if $x \rightarrow y$ this contradicts $d(x,y_2) = 2$. Thus $\gp (D) \leq n-2$.
	\end{proof}
	Corollary~\ref{cor:orientedgpatmostn-2} is tight, since the digraph with vertex set $\{ x_1,\ldots ,x_{n-2}\} \cup \{ y,z\} $ and arcs $z \rightarrow y$ and $y \rightarrow x_i , x_i \rightarrow z$ for $1 \leq i \leq n-2$ has general position number $n-2$.
	
	\begin{theorem}\label{thm:diam}
		If $D$ is a digraph with $\diamm(D) \leq 3$, then $\gp(D) \geq \alpha(D)$.
	\end{theorem}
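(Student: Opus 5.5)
The plan is to show directly that every independent set of $D$ is in general position. Applying this to a maximum independent set then gives $\gp(D) \geq \alpha(D)$. No earlier results are needed beyond the definitions of $\diamm(D)$ and of general position sets.

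Let $S$ be an independent set of $D$, and suppose for a contradiction that $S$ is not in general position. Then there are distinct $u,v,w \in S$ and a shortest $u,v$-path $P$ in $D$ that passes through $w$ as an internal vertex. Since $P$ is a geodesic, its $u,w$-subpath is a shortest $u,w$-path and its $w,v$-subpath is a shortest $w,v$-path. Hence
\[ d(u,v) = d(u,w) + d(w,v), \]
and all three distances are finite.

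Now use independence. There is no arc $u \rightarrow w$, so $d(u,w) \neq 1$; as $u \neq w$, this gives $d(u,w) \geq 2$. In the same way there is no arc $w \rightarrow v$, so $d(w,v) \geq 2$. Therefore $d(u,v) \geq 4$, and $P$ is a geodesic of length at least $4$. This contradicts $\diamm(D) \leq 3$, since $\diamm(D)$ is the length of a longest geodesic in $D$. So $S$ is in general position, and taking $|S| = \alpha(D)$ completes the argument.

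I expect no real obstacle here. The only points needing care are the following.
\begin{itemize}
\item Subpaths of a geodesic are geodesics, so the distance splits additively through $w$.
\item Independence rules out arcs in both directions, so the directed distances $d(u,w)$ and $d(w,v)$ are each at least $2$ whatever the orientation along $P$.
\item Finiteness of the distances is automatic, because $P$ itself exists.
\end{itemize}
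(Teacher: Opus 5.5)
Your proposal is correct and follows essentially the same argument as the paper. The paper notes that every internal vertex of a geodesic of length at most $3$ is adjacent to one of its endpoints, so it cannot lie in an independent set. Your version states the same fact contrapositively: an internal vertex from the independent set would force $d(u,w),d(w,v)\geq 2$, giving a geodesic of length at least $4$.
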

	\begin{proof}
		Let $D$ have $\diam ^*(D) \leq 3$ and let $A$ be a largest independent set of $D$. If $P$ is a shortest $u,v$-path for $u,v \in A$, then any internal vertex of $P$ is a neighbour of either $u$ or $v$ and hence cannot belong to $A$ by the independence property.
	\end{proof}
	
	Convex subdigraphs can be used to give upper and lower bounds for the general position number of a digraph. Firstly, if $D'$ is a convex subdigraph of $D$, and $u,v \in V(D')$, the class of shortest $u,v$-paths in $D'$ coincides with the class of shortest $u,v$-paths in $D$. It follows that a subset $S \subseteq V(D')$ is in general position in $D'$ if and only if it is in general position in $D$. 
	
	\begin{lemma}\label{lem:convex}
		If $D'$ is a convex subdigraph of a digraph $D$, then $\gp(D') \leq \gp(D)$.
	\end{lemma}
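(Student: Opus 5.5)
The plan is to take a $\gp$-set $S'$ of $D'$ and show that $S'$ is still in general position when viewed as a subset of $V(D)$. Then $\gp(D) \geq |S'| = \gp(D')$, which is the claimed inequality. This is exactly the remark made just before the statement, so the proof amounts to checking that remark carefully.

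First I would fix $u,v \in S'$ and compare the shortest $u,v$-paths in $D$ with those in $D'$.

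\begin{itemize}
\item If $d_D(u,v) = \infty$, then $D'$ has no directed $u,v$-path either, since $D'$ is a subdigraph of $D$. There is nothing to check for this pair.
\item If $d_D(u,v)$ is finite, let $P$ be any shortest $u,v$-path in $D$. By convexity every vertex of $P$ lies in $V(D')$.
\end{itemize}

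The one point needing care is that convexity, as defined, only places the vertices of $P$ in $D'$, not necessarily its arcs. To keep the arcs, I would read convexity in the standard way, where a convex subdigraph is taken as induced on its vertex set. Then every arc of $P$, joining two vertices of $V(D')$, is an arc of $D'$, so $P$ is a $u,v$-path in $D'$.

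Since $A(D') \subseteq A(D)$, we also have $d_{D'}(u,v) \geq d_D(u,v)$. Hence $P$ is a shortest $u,v$-path in $D'$, and in particular $d_{D'}(u,v) = d_D(u,v)$. So the class of shortest $u,v$-paths in $D$ is contained in the class of shortest $u,v$-paths in $D'$.

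Now suppose some shortest $u,v$-path in $D$ passes through a vertex of $S' - \{u,v\}$. By the previous step, that same path is a shortest $u,v$-path in $D'$, which contradicts $S'$ being in general position in $D'$. Therefore $S'$ is in general position in $D$, and $\gp(D) \geq \gp(D')$.

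The only real obstacle is the vertices-versus-arcs subtlety above. If $D'$ is not induced, I would instead pass to the induced subdigraph on $V(D')$. That subdigraph is convex, and adding arcs among the vertices of $D'$ only shortens paths within the subdigraph, so it does not change the argument applied to $D$ itself.
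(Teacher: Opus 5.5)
Your main argument is correct and is the paper's argument: the paper notes that for convex $D'$ and $u,v \in V(D')$, the shortest $u,v$-paths in $D'$ and in $D$ coincide, so a $\gp$-set of $D'$ stays in general position in $D$. You verify the one inclusion that is actually needed, and you rightly point out that this requires the arcs of $P$, not just its vertices, to lie in $D'$.

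Your last paragraph, however, is wrong and should be deleted. Passing to the induced subdigraph $D''$ on $V(D')$ only gives $\gp(D'') \leq \gp(D)$. It says nothing about $\gp(D')$, because a $\gp$-set of $D'$ need not be in general position in $D''$: the extra arcs create new geodesics.

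In fact the inequality is false for non-induced $D'$ under the literal vertex-only definition. Take $D$ to be the directed path $a \rightarrow b \rightarrow c$, so $\gp(D) = 2$. Take $D'$ to be the arcless digraph on $\{a,b,c\}$. Every vertex of every geodesic of $D$ lies in $V(D')$, yet $D'$ has no paths at all, so $\gp(D') = 3$.

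So the hypothesis that $D'$ is induced, or at least that every shortest $u,v$-path of $D$ with $u,v \in V(D')$ is a path of $D'$, is genuinely necessary, not a convenience. It is exactly what the paper's ``coincide'' remark tacitly assumes, and you should state it as part of the hypothesis rather than claim the non-induced case can be reduced to it.
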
   
	
	We now prove a directed version of the `Isometric Cover Lemma' from~\cite{ref9}. A set of isometric subdigraphs $\{ D_1,D_2,\ldots,D_k\}$ of a digraph $D$ is an \emph{isometric cover} of $D$ if $\cup_{i=1}^{k} V(D_i) = V(D)$.
	
	\begin{theorem}\label{thm:isometric cover}
		If $\{ D_1,D_2,\dots,D_k\}$ is an isometric cover of a digraph $D$, then $\gp(D) \leq \sum_{i=1}^{k} \gp (D_i)$.
	\end{theorem}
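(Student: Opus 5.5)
The plan is to follow the undirected Isometric Cover Lemma: take a $\gp$-set $S$ of $D$, split it along the cover, and show that each piece is in general position in the corresponding $D_i$.

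First, I would fix a $\gp$-set $S$ of $D$, so $|S| = \gp(D)$. For each $i \in [k]$ I set $S_i = S \cap V(D_i)$. Because $\bigcup_{i=1}^k V(D_i) = V(D)$, the sets $S_i$ cover $S$, and therefore
\[ \gp(D) = |S| \leq \sum_{i=1}^{k} |S_i|. \]
The theorem then follows once I show $|S_i| \leq \gp(D_i)$ for each $i$. This in turn follows if each $S_i$ is in general position in $D_i$.

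The key step is to check this. Suppose, for a contradiction, that $u,v \in S_i$ and some shortest $u,v$-path $P$ in $D_i$ passes through a vertex $w \in S_i - \{u,v\}$. Since $D_i$ is isometric, $d_{D_i}(u,v) = d_D(u,v)$. Also, $P$ is a directed path in $D$, because $A(D_i) \subseteq A(D)$, and its length is $d_{D_i}(u,v) = d_D(u,v)$. So $P$ is a shortest $u,v$-path in $D$. It contains the vertex $w \in S - \{u,v\}$, which contradicts $S$ being in general position in $D$. Hence $S_i$ is in general position in $D_i$, and $|S_i| \le \gp(D_i)$. Summing over $i$ gives the bound.

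The only point needing care is the directed setting. I must use isometry for the ordered pair $(u,v)$ specifically, since $d(u,v)$ and $d(v,u)$ may differ. I must also handle the case $d_D(u,v) = \infty$. In that case $D_i$ has no $u,v$-path either, so there is nothing to check. No other obstacle is expected. Note that the argument needs isometry, not convexity: a geodesic of $D_i$ is a geodesic of $D$, and that is the only direction used.
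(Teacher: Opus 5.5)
Your proof is correct and follows the same argument as the paper. You restrict a $\gp$-set $S$ of $D$ to each $V(D_i)$, use isometry to see that every geodesic of $D_i$ is a geodesic of $D$ (so $S_i$ is in general position in $D_i$), and then sum $|S_i| \le \gp(D_i)$ over the cover; you just spell out the ordered-pair and infinite-distance details of the isometry step that the paper leaves implicit.
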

	\begin{proof}
		Let $S$ be a $\gp$-set of $D$ and set $S_i= S \cap V(D_i)$ for 
		$i\in [k]$. If $S_i$ is not in general position in the isometric subdigraph $D_i$, then it would not be in general position in $D$. As any subset of $S$ is in general position in $D$, it follows that $S_i$ is in general position in $D_i$. Hence $\gp (D_i) \geq |S_i|$ for $i \in [k]$, and \[ |S| \leq \sum _{i = 1}^k |S_i| \leq \sum _{i = 1}^k\gp (D_i).\]     
	\end{proof}
	
	We now describe the effect of adding a source or sink vertex to the digraph; this will be useful when we discuss tournaments.
	
	\begin{lemma}\label{lem:addingsinks}
		The general position of the join of two digraphs $D_1$ and $D_2$ is given by \[ \gp (D_1 \vee D_2) = \gp (D_1)+\gp (D_2).\] 
	\end{lemma}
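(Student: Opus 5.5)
The plan is to understand shortest paths in $D = D_1 \vee D_2$ completely, and then prove the two inequalities separately. In the join every arc from $V(D_1)$ to $V(D_2)$ is present and no new arc goes from $D_2$ back to $D_1$. I will therefore classify geodesics by where their endpoints lie.

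\emph{Step 1: classify geodesics.} If $u \in V(D_1)$ and $v \in V(D_2)$, then $d(u,v)=1$, so the only geodesic is the arc $u \rightarrow v$, which has no internal vertices. If $u \in V(D_2)$ and $v \in V(D_1)$, then $v$ is unreachable, since no arc leaves $D_2$ towards $D_1$. If $u,v \in V(D_1)$, any $u,v$-path that enters $D_2$ can never return to $D_1$, so every $u,v$-path lies in $D_1$. Hence the geodesics of $D$ between vertices of $D_1$ are exactly those of $D_1$. The same holds for $D_2$: a path from a vertex of $D_2$ cannot leave $D_2$. In particular $D_1$ and $D_2$ are convex subdigraphs of $D$.

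\emph{Step 2: lower bound.} Take $\gp$-sets $S_1$ of $D_1$ and $S_2$ of $D_2$, and set $S=S_1\cup S_2$. Consider a geodesic between two vertices of $S$. If both endpoints lie in the same $D_i$, then by Step 1 it is a geodesic of $D_i$. It therefore has no internal vertex in $S_i$, and it contains no vertex of the other part at all. If the endpoints lie in different parts, the geodesic is a single arc or does not exist. So $S$ is in general position, and $\gp(D)\ge \gp(D_1)+\gp(D_2)$.

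\emph{Step 3: upper bound.} Since $\{D_1,D_2\}$ consists of convex, hence isometric, subdigraphs covering $V(D)$, Theorem~\ref{thm:isometric cover} gives $\gp(D)\le \gp(D_1)+\gp(D_2)$ directly. Equivalently, one can intersect a $\gp$-set of $D$ with each $V(D_i)$ and apply Lemma~\ref{lem:convex}.

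The only real care point is Step 1, and specifically the claim that no shortest path between two vertices of $D_1$ detours through $D_2$. This follows because arcs only go from $D_1$ to $D_2$, so such a path could never come back to $D_1$.
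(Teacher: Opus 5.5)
Your proof is correct and follows the same route as the paper: $D_1$ and $D_2$ are convex (hence isometric) in $D_1 \vee D_2$, so Theorem~\ref{thm:isometric cover} gives the upper bound, and the union of $\gp$-sets of $D_1$ and $D_2$ is in general position because arcs run only from $D_1$ to $D_2$. One small slip: your alternative for Step 3 needs the remark preceding Lemma~\ref{lem:convex}, namely that a subset of a convex subdigraph is in general position there if and only if it is in general position in $D$; the lemma's own inequality $\gp(D')\le\gp(D)$ points the wrong way for this bound.
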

	\begin{proof}
		Both $D_1$ and $D_2$ are convex subdigraphs of $D_1 \vee D_2$, so $\gp (D_1 \vee D_2) \leq \gp (D_1)+\gp (D_2)$ by Theorem~\ref{thm:isometric cover}. Let $S_i$ be a largest general position set of $D_i$ for $i = 1,2$. Then $S_1 \cup S_2$ is a general position set of $D_1 \vee D_2$, since each vertex of $D_2$ is at distance 1 from $D_1$ and no vertex of $D_1$ is reachable from any vertex of $D_2$.
	\end{proof}
	In particular, the digraph formed from $D$ by adding a source vertex adjacent to every vertex of $D$ or a sink adjacent from every vertex of $D$ has general position number $\gp (D)+1$. More generally, it follows from Theorem~\ref{thm:isometric cover} that adding a source or a sink $x$ to a digraph $D$ gives a digraph $D'$ with general position number $\gp (D)$ or $\gp (D)+1$. We will have $\gp (D')=\gp(D)+1$ if $x$ is a source adjacent only to sinks of $D$ or is a sink adjacent only from sources of $D$.
	
	The structure of general position sets in undirected graphs was characterised in~\cite{AnaChaChaKlaTho}. We now generalise this characterisation to directed graphs.
	
	\begin{theorem}\label{thm:characterisation}
		Let $S$ be a vertex subset of the digraph $D$ and denote the strong components of $\langle S \rangle $ by $S_1,\ldots,S_r$. Then $S$ is a general position set of $D$ if and only if 
		\begin{itemize}
			\item each strong component $S_i$ is a complete digraph, 
			\item $\{ S_1,\dots ,S_r\} $ is \emph{distance-constant}, i.e. for any $1 \leq i < j \leq r$, we have $d(u,v) = d(u',v')$ for any $u,u' \in S_i$ and $v,v' \in S_j$ (where we allow this distance to be infinite),
			\item for no $1 \leq i,j,k \leq r$ is it the case that \[ d(S_i,S_k) = d(S_i,S_j)+d(S_j,S_k) < \infty .\]
		\end{itemize} 
	\end{theorem}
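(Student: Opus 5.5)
I will prove the two directions separately, leaning on Proposition~\ref{prop:gpsetsaretransitive} and on the elementary fact that any subpath of a geodesic is a geodesic.

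\emph{Necessity.} Let $S$ be in general position.

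\begin{itemize}
\item \emph{Components are complete.} By Proposition~\ref{prop:gpsetsaretransitive}, $\langle S\rangle$ is transitive. The strong components of a transitive digraph are complete digraphs, so each $S_i$ is complete.

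\item \emph{Distance-constancy.} It suffices to compare two vertices in the same component with a common vertex elsewhere, and then chain the two comparisons. So take $u,u'\in S_i$ and $v\in S_j$ with $i\neq j$. Since $u\rightarrow u'$ and $u'\rightarrow u$, we have $d(u,v)\le 1+d(u',v)$. Suppose $d(u',v)<d(u,v)<\infty$. Then $d(u,v)=1+d(u',v)$, so the arc $u\rightarrow u'$ followed by a shortest $u',v$-path is a shortest $u,v$-path through $u'\in S$, a contradiction. The same argument works with the roles of $u$ and $u'$ swapped. If exactly one of the two distances is infinite, the arc $u\rightarrow u'$ or $u'\rightarrow u$ shows the other is infinite too. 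The argument for varying $v$ inside $S_j$ is symmetric, using arcs into $v$.

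\item \emph{Third condition.} Suppose $d(S_i,S_k)=d(S_i,S_j)+d(S_j,S_k)<\infty$. If the indices are distinct, pick $u\in S_i$, $w\in S_j$, $v\in S_k$. By distance-constancy, $d(u,v)=d(u,w)+d(w,v)$, so concatenating a shortest $u,w$-path with a shortest $w,v$-path gives a walk of length $d(u,v)$. Such a walk is necessarily a path, and it is therefore a shortest $u,v$-path through $w$, a contradiction. The degenerate cases (e.g.\ $j=i$, where $d(S_i,S_i)=0$) are trivially equalities. I would read the condition as ranging over distinct $i,j,k$, or else interpret $d(S_i,S_i)$ suitably; this is a point to check against the intended statement.
\end{itemize}

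\emph{Sufficiency.} Assume the three conditions. Suppose a shortest $u,v$-path $P$ with $u,v\in S$ contains an internal vertex $w\in S$.

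\begin{itemize}
\item If $u$ and $v$ lie in the same component, then $d(u,v)=1$ because that component is complete. So $P$ has no internal vertex, a contradiction.

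\item Otherwise let $u\in S_i$, $v\in S_k$ with $i\ne k$, and $w\in S_j$. Since subpaths of geodesics are geodesics, $d(u,v)=d(u,w)+d(w,v)<\infty$.
  \begin{itemize}
  \item If $j\notin\{i,k\}$, distance-constancy turns this into $d(S_i,S_k)=d(S_i,S_j)+d(S_j,S_k)$, contradicting the third condition.
  \item If $j=i$, then $w\ne u$ and $u\rightarrow w$, so $d(u,w)=1$. This gives $d(u,v)=1+d(w,v)$, whereas distance-constancy gives $d(w,v)=d(u,v)$, a contradiction.
  \item The case $j=k$ is symmetric.
  \end{itemize}
\end{itemize}

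The main obstacle I expect is bookkeeping rather than depth. Specifically:
\begin{itemize}
\item $d(S_i,S_j)$ is a set distance, and distance-constancy is exactly what makes it equal to every pointwise distance. It must be invoked explicitly each time we pass between set and point distances.
\item Infinite distances must be handled in the constancy argument.
\item The concatenated walk must be shown to be a genuine path; this holds because its length equals $d(u,v)$.
\end{itemize}
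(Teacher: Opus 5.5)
Your proof is correct and matches the paper's argument essentially step for step. Completeness comes from transitivity, distance-constancy from the single-extra-arc argument, and the third condition from concatenating geodesics; sufficiency uses the same split into same-component, two-in-one-component and three-distinct-components cases. Your remark that the third condition must be read with $i,j,k$ distinct is apt, since $d(S_i,S_i)=0$ would otherwise make it fail trivially, and the paper's proof in fact uses it only in that form.
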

	\begin{proof}
		First we show the necessity of the three conditions. Let $D$ be a digraph and $S$ be a general position set of $D$. By Proposition~\ref{prop:gpsetsaretransitive} each strong component of $\langle S\rangle$ is transitive, and any strongly connected transitive digraph is complete. Now let $u,u' \in S_i$ and $v,v' \in S_j$, where $i \neq j$. As $\langle S_j\rangle$ is a complete digraph, the distance $d(u,v)$ is finite if and only if $d(u,v')$ is finite. Suppose that $d(u,v) < d(u,v')$. As there is an arc $v \rightarrow v'$ in $\langle S_j\rangle$, we must have $d(u,v') = d(u,v)+1$ and there is a shortest $u,v'$-path passing through $v$, a contradiction. Therefore $u$ has the same distance to all vertices of $S_j$, and a similar argument shows that each vertex of $S_j$ is at the same distance from each vertex of $S_i$.  Finally, suppose that $d(S_i,S_k) = d(S_i,S_j)+d(S_j,S_k) < \infty $ for some $i,j,k$. Then as $\{ S_1,\dots ,S_r\} $ is distance-constant there are $u \in S_i$, $v \in S_j$ and $w \in S_k$ such that $d(u,v) = d(S_i,S_j)$, $d(v,w) = d(S_j,S_k)$ and $d(u,w) = d(S_i,S_k)$ and it follows that $D$ would contain a shortest $u,w$-path that passes through $v$, a contradiction.
		
		Now we prove sufficiency of the three conditions. Let $S \subseteq V(D)$ satisfy the three conditions and suppose for a contradiction that there are $u,v,w \in S$ such that $D$ contains a shortest $u,w$-path passing through $v$. As each strong component of $\langle S\rangle$ is complete, $u$ and $w$ do not lie in the same strong component, for otherwise they would be at distance 1 from each other. If we have $u,v \in S_i$ and $w \in S_j$ or $u \in S_i$ and $v,w \in S_j$ for some $i \neq j$, then this would contradict the distance-constant property. Finally, if $u,v$ and $w$ all lie in different strong components, then this would contradict the third property.
	\end{proof}
	
	When applied to an undirected graph, Theorem~\ref{thm:characterisation} reduces to the characterisation result in~\cite{AnaChaChaKlaTho}. In an oriented graph each strong component of $\langle S\rangle$ is a single vertex, and so $S$ induces an acyclic digraph. The \emph{acyclic number} $\overrightarrow{a}(D)$ of a digraph is the largest number of vertices in a subset of $V(D)$ that induces an acyclic subdigraph. 
	
	\begin{corollary}
		For any oriented graph $D$, $\gp (D) \leq \overrightarrow{a}(D)$.
	\end{corollary}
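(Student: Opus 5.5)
It suffices to show that every general position set $S$ of an oriented graph $D$ induces an acyclic subdigraph. Then $|S| \leq \overrightarrow{a}(D)$, and taking $S$ to be a $\gp$-set gives the inequality.

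I would take $S$ to be a $\gp$-set of $D$ and apply Theorem~\ref{thm:characterisation}. Let $S_1,\ldots ,S_r$ be the strong components of $\langle S\rangle$. By the first condition of that theorem, each $S_i$ induces a complete digraph. A complete digraph on at least two vertices contains a 2-cycle, which an oriented graph does not have. Hence every $S_i$ is a single vertex.

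Next I would observe that a digraph whose strong components are all singletons is acyclic. A directed cycle lies inside a single strong component, since its vertices reach one another along the cycle. That component would then have at least two vertices, because loops are excluded. So $\langle S\rangle$ is acyclic and $|S|\le \overrightarrow{a}(D)$.

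There is no real obstacle here. The only points to state explicitly are that orientedness excludes 2-cycles, which forces the complete strong components to be trivial, and that a cycle is contained in one strong component. Alternatively, Proposition~\ref{prop:gpsetsaretransitive} gives the same conclusion directly: $\langle S\rangle$ is transitive, and a directed cycle $u_0,\ldots ,u_{\ell-1}$ in a transitive digraph yields by induction an arc $u_0\rightarrow u_{\ell-1}$. Together with the cycle arc $u_{\ell-1}\rightarrow u_0$ this is a 2-cycle, contradicting that $D$ is oriented.
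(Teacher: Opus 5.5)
Your proposal is correct and follows the paper's argument: Theorem~\ref{thm:characterisation} makes each strong component of $\langle S\rangle$ complete, hence a single vertex in an oriented graph, so $\langle S\rangle$ is acyclic. Your alternative via Proposition~\ref{prop:gpsetsaretransitive} (transitivity along a directed cycle forces a 2-cycle) is also valid and is essentially the same argument.
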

	The paper~\cite{ChaDiSreeThoTui2024+} defines the $\gp $-chromatic number $\chi _{\gp }(G)$ of a graph $G$ to be the smallest number of colours needed to colour the vertices of $G$ such that each colour class is in general position. Extending the definition in the obvious way to digraphs, it follows that $\chi _{\gp }(D) \leq \overrightarrow{\chi } (D)$ for oriented graphs, where $\overrightarrow{\chi } (D)$ is the well-known \emph{dichromatic number} of $D$, the smallest number of colours needed to colour $V(D)$ such that each colour class is acyclic. 
	
	Finally, we consider the complexity of the problem. Since the general position problem for digraphs includes the general position problem for undirected graphs as a special case, and the latter is already known to be NP-complete~\cite{ref9}, it follows trivially that the general position problem for digraphs is NP-complete. A more interesting result is that the general position problem is NP-complete for the class of oriented graphs. We prove that its decision version is NP-complete by a reduction from the Independent Set problem, that, given a graph $G$ and a positive integer $k\leq |V(G)|$, asks for an independent set of vertices having size at least $k$. We denote this problem by \IS, whereas we call \GP the decision version of the general position problem for oriented graphs, that, given an oriented graph $D$ and a positive integer $k'\leq |V(D)|$ asks for a general position set in $D$ of cardinality at least $k'$. We observe that an independent set of a graph $G$ remains an independent set in any orientation of the edges of $G$. 
	
	\begin{figure}
		\centering
		\begin{tikzpicture}[very thick,vertex/.style={circle,draw,minimum size=10,inner sep=1pt,fill=white,font=\scriptsize}]
			\node at (1,0.6) [vertex,fill=red] (s) {{\normalsize $s$}};
			\node at (1,2) [vertex,fill=white] (sb) {$s^2$};
			\node at (0,3) [vertex,fill=red] (sa) {$s^1$};
			\node at (2,3) [vertex,fill=red] (sc) {$s^3$};
			
			\node at (4,0.6) [vertex,fill=white] (t) {{\small $t$}};
			\node at (4,2) [vertex,fill=white] (tb) {$t^2$};
			\node at (3,3) [vertex,fill=red] (ta) {$t^1$};
			\node at (5,3) [vertex,fill=red] (tc) {$t^3$};
			
			\node at (7,0.6) [vertex,fill=white] (u) {{\small $u$}};
			\node at (7,2) [vertex,fill=white] (ub) {$u^2$};
			\node at (6,3) [vertex,fill=red] (ua) {$u^1$};
			\node at (8,3) [vertex,fill=red] (uc) {$u^3$};
			
			\node at (10,0.6) [vertex,fill=white] (v) {{\small $v$}};
			\node at (10,2) [vertex,fill=white] (vb) {$v^2$};
			\node at (9,3) [vertex,fill=red] (va) {$v^1$};
			\node at (11,3) [vertex,fill=red] (vc) {$v^3$};
			
			\node at (13,0.6) [vertex,fill=white] (w) {{\small $w$}};
			\node at (13,2) [vertex,fill=white] (wb) {$w^2$};
			\node at (12,3) [vertex,fill=red] (wa) {$w^1$};
			\node at (14,3) [vertex,fill=red] (wc) {$w^3$};
			
			\node at (7,0.6) [vertex,fill=red] (u) {{\small $u$}};
			
			\node at (13,0.6) [vertex,fill=red] (w) {{\small $w$}};
			
			\path
			(sa) edge[->] (sb)
			(sb) edge[->] (sc)
			(sb) edge[->] (s)
			(sb) edge[->] (u)
			
			(ta) edge[->] (tb)
			(tb) edge[->] (tc)
			(tb) edge[->] (t)
			
			(va) edge[->] (vb)
			(vb) edge[->] (vc)
			(vb) edge[->] (v)
			
			(ua) edge[->] (ub)
			(ub) edge[->] (uc)
			(ub) edge[->] (u)
			
			(wa) edge[->] (wb)
			(wb) edge[->] (wc)
			(wb) edge[->] (w)
			
			(wb) edge[->] (u)
			(s) edge[->] (t) 
			(t) edge[->] (u) 
			(v) edge[->] (u)
			(w) edge[->] (v)

			;
		\end{tikzpicture}
		~\\~\\
		\begin{tikzpicture}[very thick,vertex/.style={circle,draw,minimum size=10,inner sep=1pt,fill=white,font=\scriptsize}]
			\node at (1,0) [vertex,fill=white] (s) {{\normalsize $s$}};
			\node at (1,2) [vertex,fill=white] (sb) {$s^2$};
			\node at (0,3) [vertex,fill=red] (sa) {$s^1$};
			\node at (2,3) [vertex,fill=red] (sc) {$s^3$};
			
			\node at (4,0) [vertex,fill=red] (t) {{\small $t$}};
			\node at (4,2) [vertex,fill=white] (tb) {$t^2$};
			\node at (3,3) [vertex,fill=red] (ta) {$t^1$};
			\node at (5,3) [vertex,fill=red] (tc) {$t^3$};
			
			\node at (10,0) [vertex,fill=red] (v) {{\small $v$}};
			\node at (10,2) [vertex,fill=white] (vb) {$v^2$};
			\node at (9,3) [vertex,fill=red] (va) {$v^1$};
			\node at (11,3) [vertex,fill=red] (vc) {$v^3$};
			
			\node at (7,0) [vertex,fill=white] (u) {{\small $u$}};
			\node at (7,2) [vertex,fill=white] (ub) {$u^2$};
			\node at (6,3) [vertex,fill=red] (ua) {$u^1$};
			\node at (8,3) [vertex,fill=red] (uc) {$u^3$};
			
			\path
			(sa) edge[->] (sb)
			(sb) edge[->] (sc)
			(sb) edge[->] (s)
			(sb) edge[->] (u)
			(sb) edge[->] (v)
			
			(ta) edge[->] (tb)
			(tb) edge[->] (tc)
			(tb) edge[->] (t)
			(tb) edge[->] (v)
			(tb) edge[->] (s)
			
			(va) edge[->] (vb)
			(vb) edge[->] (vc)
			(vb) edge[->] (v)
			(vb) edge[->] (u)
			(vb) edge[->] (t)
			
			(ua) edge[->] (ub)
			(ub) edge[->] (uc)
			(ub) edge[->] (u)
			(ub) edge[->] (s)
			(ub) edge[->] (t)
			
			(s) edge[->] (t) 
			(t) edge[->] (u) 
			(u) edge[->] (v)
			(v) edge[->][bend left=10](s)

			;
		\end{tikzpicture}
		
		\caption{Two examples of $\Phi (D)$ from the proof of Theorem~\ref{thm:NP} with optimal general position sets in red, applied to an oriented path (above) and an oriented cycle (below).}
		\label{fig:NP}
	\end{figure}
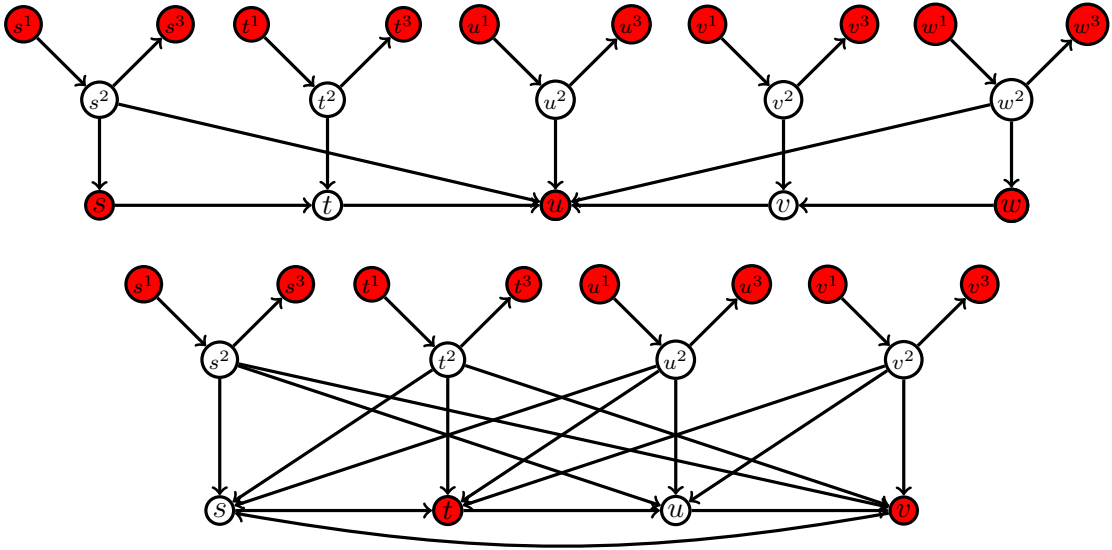

	\begin{theorem}\label{thm:NP}
		\GP is NP-complete even for oriented graphs $D$ with \newline $\diamm(D)=3$.
	\end{theorem}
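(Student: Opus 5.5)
We show that \GP is in NP and give a polynomial reduction from \IS. Membership in NP is routine. Given $S$, compute all distances by BFS. Then, for every triple $u,v,w\in S$, test whether $d(u,w)=d(u,v)+d(v,w)<\infty$; this happens exactly when some shortest $u,w$-path passes through $v$. For the reduction, let $(G,k)$ be an instance of \IS and fix an arbitrary orientation $D$ of $G$; by the remark above, independent sets of $G$ and of $D$ coincide. We then build the oriented graph $\Phi(D)$ of Figure~\ref{fig:NP}. For each vertex $x$ of $D$ we add a gadget $x^1\rightarrow x^2\rightarrow x^3$ together with the arc $x^2\rightarrow x$. We also add arcs $x^2\rightarrow y$ for every $y$ with $2\le d_D(x,y)<\infty$. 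These shortcut arcs force every geodesic of $\Phi(D)$ to be short, and they do not create 2-cycles, since no $y\in V(D)$ points back to $x^2$. We set $k'=2n+k$, where $n=|V(G)|$, and claim that $\gp(\Phi(D))=2n+\alpha(G)$.

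The first step is to verify $\diamm(\Phi(D))=3$. Each $x^1$ is a source and each $x^3$ is a sink. Any vertex $y$ reachable from $x^2$ in $D\cup\{x^2\}$ is at distance at most $1$ from $x^2$, so $x^1$ reaches everything it reaches within distance $3$. Inside $D$ we compare with a route through the gadgets: a $u,v$-path in $D$ of length $\ge 4$ could be rerouted if some $x^2$ points into it, but no arc enters $x^2$ except from $x^1$. So this step needs care. The intended point is that the only in-arcs of gadget vertices come from $x^1$, while shortcut arcs leave $x^2$. I therefore expect the longest geodesics to be of the form $x^1,x^2,x,y$ with $x\rightarrow y$ in $D$; these have length $3$ because $x^2\not\rightarrow y$ when $d_D(x,y)=1$. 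Should geodesics inside $D$ itself be too long, I would first try to modify the construction by subdividing or adding a common sink/source, keeping the gadget analysis intact.

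The second step is the forward direction. Let $I$ be an independent set of $G$ and put $S=I\cup\{x^1,x^3: x\in V(D)\}$. The vertices $x^1$ are sources and the $x^3$ are sinks, so neither can be an internal vertex of a geodesic. A vertex $y\in I$ can be internal only on a geodesic between two vertices of $S$. Such a geodesic would have to enter $y$ from some $x^2$ or from a $D$-neighbour of $y$, and leave $y$ towards a $D$-neighbour. Independence of $I$ excludes a $D$-neighbour at either end. Paths of the form $x^1\to x^2\to y\to z$ end at $z\notin S$. A path $z^1,z^2,\dots$ into $y$ and then on to an $S$-vertex needs an $S$-vertex adjacent from $y$, which again contradicts independence. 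Hence $S$ is in general position and has size $2n+k$.

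The third step, the converse, is the main obstacle. Let $S$ be a general position set of size $2n+k$. The key observation is the following: if $x^1\in S$ and $x\rightarrow y$ in $D$ with $x,y\in S$, then $x^1,x^2,x,y$ is a shortest path containing three vertices of $S$. For each gadget we then apply an exchange argument, replacing $S\cap\{x^1,x^2,x^3\}$ by $\{x^1,x^3\}$. By Proposition~\ref{prop:gpsetsaretransitive}, a set containing $x^2$ can contain at most two of the three gadget vertices. Where an exchange would create a conflict with some $y\in S\cap V(D)$, we delete one endpoint of the offending arc instead. We must check that every exchange does not decrease $|S|$ and preserves general position; the sources and sinks added are harmless by Lemma~\ref{lem:ext}. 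After normalisation, $S$ contains all $x^1$, $x^3$ and no $x^2$. By the observation above, $S\cap V(D)$ is then independent in $D$, and hence in $G$, with size at least $k$. Bookkeeping the exchange cases, especially when several $x^2$ lie in $S$, is where I expect most of the work.
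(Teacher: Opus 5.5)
Your reduction has a genuine gap, and it sits in the first step rather than the third. In $\Phi(D)$ every arc leaving a vertex of $V(D)$ is an arc of $D$. So $V(D)$ is convex, $d_{\Phi(D)}(u,v)=d_D(u,v)$ for $u,v\in V(D)$, and $\diam^*(\Phi(D))=\max\{3,\diam^*(D)\}$, which is unbounded for an arbitrary orientation of $G$.

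This breaks your forward direction as well as the diameter claim. Your argument that $I\cup V^{(1)}\cup V^{(3)}$ is in general position only excludes geodesics whose internal vertices are adjacent to an endpoint. That is the argument of Theorem~\ref{thm:diam}, and it needs $\diam^*\le 3$. For example, orient $G=P_5$ as $a\rightarrow b\rightarrow c\rightarrow d\rightarrow e$, so $\alpha(G)=3$. Any general position set of $\Phi(D)$ meets the convex directed path $D$ in at most $2$ vertices, and meets each geodesic $z^1,z^2,z^3$ in at most $2$. Hence $\gp(\Phi(D))\le 12<13=2n+\alpha(G)$, so $(\Phi(D),2n+3)$ is a no-instance although $(G,3)$ is a yes-instance. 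The repairs you suggest do not help: a new source or sink is never an internal vertex of a path, so it shortens no distance, and subdividing only lengthens paths.

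The missing idea is the paper's. Pass to $G'=G\vee K_2$ with $V(K_2)=\{x,y\}$, orient $x\rightarrow y$ and $u\rightarrow x$, $y\rightarrow u$ for all $u\in V(G)$, and orient $E(G)$ arbitrarily to obtain $D'$. The route $u,x,y,v$ gives $\diam(D')\le 3$, hence $\diam^*(\Phi(D'))=3$. Meanwhile $\alpha(G')=\alpha(G)$ because $x$ and $y$ are universal, so you take $k'=2(n+2)+k$. The forward direction is then just Theorem~\ref{thm:diam} applied to the independent set $V^{(1)}\cup V^{(3)}\cup A$.

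For the converse, which follows the paper's exchange strategy, you can replace the exchange bookkeeping by a direct count. You should be wary of the exchanges anyway: replacing a single $u$ by $u^1$ can create a geodesic $u^1,u^2,w,v$ with $u\rightarrow v$, $w\rightarrow v$, $d(u,w)\ge 2$ and $w,v\in S$. To count, let $T=S\cap V(D')$ and let $T_0$ be the vertices of $T$ having an out-neighbour $v$ in $T$.
\begin{itemize}
\item For $u\in T_0$, the geodesic $u^1,u^2,u,v$ forces $u^1,u^2\notin S$, so that gadget contributes at most $1$.
\item Every other gadget contributes at most $2$.
\item $T\setminus T_0$ is independent.
\end{itemize}
Therefore $|S|\le 2(n+2)-|T_0|+|T|\le 2(n+2)+\alpha(G)$.
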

	\begin{proof}
		We note that \GP is in NP since, given a subset $S$ of $k'$ vertices of the input graph, it is possible to check in polynomial time if $S$ is in general position. Let $I = (G,k)$ be an instance of \IS. Note that \IS is the same problem as the {\sc Clique} problem in the complement graph, which is NP-complete~\cite{Karp72}. We construct an instance $I' = (D,k')$ of \GP such that $I$ has a yes answer if and only if $I'$ has a yes answer.
		
		For a digraph $D$ with order $n$ and no isolated vertices we define the digraph $\Phi (D)$ as follows. We add to $D$ a set $V^{(1)} \cup V^{(2)} \cup V^{(3)}$ of vertices, where $V^{(i)} = \{ v^i : v \in V(D)\} $ for $i = 1,2,3$, as well as the arcs $v^1 \rightarrow v^2$, $v^2 \rightarrow v^3$ and $v^2 \rightarrow v$ for all $v \in V(D)$. Finally for each pair $u,v \in V(D)$ we add the arc $u^2 \rightarrow v$ if and only if $u$ reaches $v$ in $D$ and $d_D(u,v) \geq 2$. As the vertices $u^1,u^3$ are the only leaves in $\Phi (D)$, it is simple to recover $D$ from $\Phi (D)$. See Figure~\ref{fig:NP} for two examples of the operation $\Phi $ together with largest general position sets of the resulting digraphs. Observe that $\diamm (\Phi (D)) = \max \{ 3,\diamm (D)\} $.
		
		Let $G$ be any connected graph with order $n$ and set $k' = 2n+4+k$. Let $G' = G \vee K_2$, where $V(K_2) = \{ x,y\} $. Note that $\alpha (G') = \alpha (G)$. Now form a digraph $D'$ from $G'$ with arcs $x \rightarrow y$ and  $u \rightarrow x$, $y \rightarrow u$ for all $u \in V(G')$, and orient the edges in $E(G)$ at random. We have $\diam (D') \leq 3$, so $\diamm (\Phi (D')) = 3$. 
		
		We now show that $\gp (\Phi (D')) = 2(n+2)+\alpha (D') = 2n+4+\alpha (G)$. Note that if $A$ is a largest independent set of $G$ (and hence $A$ is also a largest independent set of $D'$), then $V^{(1)} \cup V^{(3)} \cup A$ is a largest independent set of $\Phi (D')$, and so by Theorem~\ref{thm:diam} we have $\gp (\Phi (D')) \geq 2(n+2)+\alpha (D')$. For the converse, let $S$ be a largest general position set of $\Phi (D')$. Suppose that $v^2 \in S$ for some vertex $v \in V(D')$. Then as $v^1,v^2,v^3$ is a shortest path in $\Phi (D')$, we cannot have both $v^1,v^3 \in S$. Thus we can replace $v^2$ by a vertex in $\{ v^1,v^3\} - S$ to get a general position set of the same order. By repeating this operation, we see that we can assume that $S \cap V^{(2)} = \emptyset $. Now suppose that $u,v \in S$ for some $u,v \in V(D')$, where $u \rightarrow v$. As $u^1,u^2,u,v$ is a shortest path in $\Phi (D')$, we see that $u^1 \not \in S$. Thus we can replace $u$ in $S$ by $u^1$ and still have a general position set. By repeating this, we can now assume that $S \cap V(D')$ to be an independent set of $D'$. Hence $|S| \leq 2(n+2)+\alpha (D')$ and the equality is established.
		
		Hence $I$, an instance of \IS for $G$, has a positive answer if and only if $I'$ has a positive answer for $\Phi (D')$.
	\end{proof}

	\section{Families of digraphs}\label{sec:families}
	
	In this section we explore the general position number for certain families of digraphs. 
	
	\subsection{Circulant digraphs}
	Recall that the \emph{circulant digraph} $\Circ(n,C)$ has vertex set $\Z_n=\{0,1,\ldots,n-1\}$ and a directed edge from $x$ to $x+c\pmod{n}$ for every $x\in\Z_n$ and every $c\in C$. We deal here with the special case where $C=\{1,2,\ldots,d\}$ for some fixed $d\geq 1$ and $d < n$, so that $\Circ(n,\{1,2,\ldots,d\})$ is a $d$-regular digraph of order $n$. In the rest of this subsection we will assume that $d \geq 1$ and write $n = qd+r$, where $q \geq 0$ and $2 \leq r \leq d+1$. 
	
	\begin{lemma}\label{lem:gpcirc}
		The subset $S = \{ 0,\ldots ,r-1\} $ is in general position in $\Circ(n,\{1,2,\ldots,d\})$.
	\end{lemma}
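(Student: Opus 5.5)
We compute distances in $\Circ(n,\{1,\ldots,d\})$ directly: from $x$, the vertices reachable in $k$ steps are exactly $x+1,\ldots,x+kd$, so $d(x,y)=\lceil m/d\rceil$ where $m\in\{1,\ldots,n-1\}$ is the residue of $y-x$ modulo $n$. A shortest $x,y$-path is then a sequence of $\lceil m/d\rceil$ forward steps, each in $\{1,\ldots,d\}$, summing to exactly $m$. In particular it moves only forward, so its internal vertices lie strictly inside the forward arc from $x$ to $y$. Equivalently, they are of the form $x+s$ with $0<s<m$.

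Take $u,v,w\in S=\{0,\ldots,r-1\}$ distinct, and suppose $w$ lies on a shortest $u,v$-path. We split into two cases according to whether $u<v$ or $u>v$ as integers.

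\textbf{Case $u<v$.} Here $m=v-u\le r-1\le d$, so $d(u,v)=1$ and there are no internal vertices.

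\textbf{Case $u>v$.} Here $m=n-(u-v)$, and the forward arc from $u$ to $v$ wraps around through $u+1,\ldots,n-1,0,\ldots,v-1$. A vertex $w\in S$ on it must satisfy either $w>u$ or $w<v$. We use the additivity of shortest paths: $w$ lies on a shortest $u,v$-path if and only if $d(u,w)+d(w,v)=d(u,v)$, with the forward offsets $a=w-u \bmod n$ and $b=v-w \bmod n$ satisfying $a+b=m$.

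If $w>u$, then $a=w-u\le d$, so $d(u,w)=1$, while $b=m-a$. We need $1+\lceil (m-a)/d\rceil=\lceil m/d\rceil$. Now
\[
m=n-(u-v)=qd+r-(u-v),
\]
and $1\le u-v\le r-1$, so $m\in[qd+1,\,qd+r-1]$ with $r-1\le d$. Hence $\lceil m/d\rceil=q+1$. Also $m-a\ge m-(r-1-u)$, and $u-v+a=w-v\le r-1$, so
\[
m-a=n-(w-v)\ge qd+1,
\]
giving $\lceil (m-a)/d\rceil\ge q+1$. Therefore $1+\lceil (m-a)/d\rceil\ge q+2>q+1=\lceil m/d\rceil$, which is a contradiction.

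If $w<v$, the argument is symmetric: $b=v-w\le d$ gives $d(w,v)=1$, and $a=n-(u-w)\ge qd+1$ again yields the same contradiction.

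The main point to check is the bound $n-(\text{difference of two elements of } S)\ge qd+1$. It holds precisely because $r\ge 2$ and every difference is at most $r-1$, and this is where the choice of the range $2\le r\le d+1$ is used.
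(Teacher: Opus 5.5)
Your proof is correct and follows essentially the paper's route: both use that within $S$ forward distances are $1$ and backward distances are $q+1$, so a backward geodesic through a third vertex of $S$ would have length at least $q+2$. The only difference is the ordering $v<w<u$, which you rule out because geodesics only move forward, whereas the paper bounds that detour by $2q+2$. One small correction to your closing remark: the bound $n-(w-v)\ge qd+1$ needs only $w-v\le r-1$, while it is the hypothesis $r\le d+1$ that gives $d(u,v)=1$ for $u<v$ and $\lceil m/d\rceil=q+1$.
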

	\begin{proof}
		Suppose that there is a shortest path $P$ in $\Circ(n,\{1,2,\ldots,d\})$ containing three distinct vertices $i,j,k \in S = \{ 0,\ldots ,r-1\} $, where $i < j < k$. Observe that $d(i,j) = d(i,k) = d(j,k) = 1$, the diameter is $\diam ( \Circ(n,\{1,2,\ldots,d\}) ) = q+1$ and $d(j,i) = d(k,i) = d(k,j) = q+1$. Hence $P$ must be a shortest $k,i$-path through $j$, a shortest $k,j$-path through $i$ or a shortest $j,i$-path through $k$, but by the foregoing observation each of these paths would have length $q+2$, a contradiction. Thus $S$ is in general position.
	\end{proof}
	
	\begin{theorem}\label{thm:dircirc}
		The general position number of $\Circ(n,\{1,2,\ldots,d\})$ satisfies
		\[r\leq \gp(\Circ(n,\{1,2,\ldots,d\}))\leq d+1.\] When $n\equiv 1\pmod{d}$, $\gp(\Circ(n,\{1,2,\ldots,d\})) = d+1$.
	\end{theorem}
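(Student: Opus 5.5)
The lower bound $r \leq \gp$ is immediate from Lemma~\ref{lem:gpcirc}. For $n \equiv 1 \pmod{d}$ we can take $r = d+1$ in the representation $n = qd+r$, since $2 \leq r \leq d+1$ is allowed. The lower bound then equals $d+1$, and equality follows once the upper bound is proved. So the real work is the upper bound $\gp(\Circ(n,\{1,\ldots,d\})) \leq d+1$.

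For the upper bound, let $S$ be a general position set and suppose $|S| \geq d+2$. Write $S = \{s_0 < s_1 < \cdots < s_{m-1}\}$ in cyclic order around $\Z_n$, with gaps $g_i = s_{i+1}-s_i$ (indices mod $m$, computed mod $n$). Then $g_i \geq 1$ and $\sum g_i = n$.

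The key fact about distances is that the distance from $x$ to $y$ is $\lceil (y-x \bmod n)/d \rceil$. A path $x \to z \to y$ through $z$, with $z$ lying cyclically between $x$ and $y$ (going forward), is geodesic exactly when
\[
\lceil a/d\rceil + \lceil b/d\rceil = \lceil (a+b)/d\rceil,
\]
where $a = z-x$ and $b = y-z$ are forward offsets. Moreover, any forward triple $x, z, y$ whose total offset $a+b$ is less than $n$ is a candidate.

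Case 1: some gap $g_i \leq d$, say the gap between $s_i$ and $s_{i+1}$. Consider the triple $s_{i-1}, s_i, s_{i+1}$. We want to choose a triple where the middle vertex does not increase the distance. The cleanest choice is to take consecutive vertices and use the inequality $\lceil a/d\rceil + \lceil b/d\rceil \leq \lceil (a+b)/d\rceil + 1$. Equality fails to be forced in general, so instead I would argue by pigeonhole on residues.

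Consider the positions of the vertices of $S$ modulo $d$ within blocks. Since $m \geq d+2$, there are consecutive elements $x, z, y$ with $z - x \equiv$ something that allows $a$ to be a multiple of $d$. Concretely: take any $x \in S$ and look at the other $m-1 \geq d+1$ elements by forward offset from $x$. By pigeonhole, two of them, $z$ and $y$ with offsets $a < c$, satisfy $\lceil a/d\rceil = \lceil c/d\rceil$ (same ``ring''), or else the rings must be spread out.

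Better: for $x \in S$, the elements at forward distance $k$ form the ring $R_k = \{x+(k-1)d+1, \ldots, x+kd\}$. If $z \in R_k$ and $y \in R_{k'}$ with $k' > k$ and $y-z \leq (k'-k)d$ and also $y-z$ is in the right range, we get a geodesic. Choosing $z = x + kd$, the last vertex of its ring, makes every $y$ further along give equality, since $\lceil (kd+b)/d\rceil = k + \lceil b/d\rceil$.

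So the strategy is as follows. Among $m \geq d+2$ elements, choose the pair $(x,z)$ with $z - x \equiv 0 \pmod d$, where $z \neq x$ and $z-x < n$. This pair exists because $d+2 > d$ elements force two with equal residue mod $d$. Then any third element $y$ lying forward after $z$ and before $x$ wraps around gives a geodesic $x \to z \to y$. This requires some $y \in S$ in the forward arc from $z$ to $x$, and we can choose the same-residue pair so this holds. Among $d+2$ elements there are at least two coincident-residue pairs, or one residue class with three elements. Either way, some choice leaves a third element of $S$ strictly beyond $z$ before returning to $x$, and the wrap-around total offset stays below $n$.

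The main obstacle is this final step: guaranteeing that the same-residue pair can be oriented so that a third $S$-vertex lies in the arc from $z$ back to $x$. I would handle it by taking, within the residue class with at least two elements, $x$ and $z$ as cyclically consecutive members of that class. When the class has exactly two members and no other elements lie after $z$, swap the roles of $x$ and $z$. The gap $x - z \pmod n$ is then also $\equiv 0 \pmod d$ unless $n \not\equiv 0$, in which case I would use a second colliding pair, which exists since $d+2$ elements give at least two collisions.
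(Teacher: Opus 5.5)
Your lower bound, the identification $r=d+1$ when $n\equiv 1\pmod d$, and your geodesic criterion are all correct. Concretely: if $x,z\in S$ have labels in $\{0,\ldots,n-1\}$ with $x<z$ and $z\equiv x\pmod d$, then every $y\in S$ in the forward arc strictly after $z$ and before $x$ lies on the geodesic $x,x+d,\ldots,z,\ldots,y$.

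The gap is the step you yourself flag as the main obstacle. The claim that ``some choice leaves a third element of $S$ strictly beyond $z$'' is asserted, not proved, and your proposed handling does not prove it:
\begin{itemize}
\item The swap only works when $d\mid n$.
\item Taking cyclically consecutive members of a class can give the wrap-around pair, whose forward offset is $\equiv n$ rather than $0\pmod d$.
\item The fallback ``use a second colliding pair'' gives no reason why the second pair cannot fail in exactly the same way as the first.
\end{itemize}
This step is the entire content of the upper bound, so as written the proposal does not establish $\gp\le d+1$.

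The missing observation is short. With integer labels and $x<z$, the forward arc from $z$ back to $x$ is the complement of the interval $[x,z]$. So a same-residue pair $(x,z)$ fails only if $S\subseteq[x,z]$, that is, only if $x=\min S$ and $z=\max S$. Hence at most one same-residue pair can fail. If $|S|\ge d+2$, there are at least $\sum_c\binom{n_c}{2}\ge\sum_c(n_c-1)\ge |S|-d\ge 2$ same-residue pairs. So some pair succeeds and gives three vertices of $S$ on a geodesic.

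With this added, your argument is complete and is a mild variant of the paper's. The paper instead takes the same-residue pair at minimal forward distance and shows, by the same geodesic argument, that the whole $\gp$-set lies in the arc between them. Minimality then forces the interior vertices to have pairwise distinct residues, none congruent to the endpoints. That bounds the interior by $d-1$ and gives $|S|\le d+1$ directly rather than by contradiction. Your fixed version avoids the minimality choice and the residue count inside the arc.
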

	\begin{proof}
		The lower bound comes from Lemma~\ref{lem:gpcirc}. For the upper bound, let $S'$ be a largest $\gp $-set. If no two elements of $S'$ are congruent $\pmod{d}$, then $|S'| \leq d$. Thus we can assume that $S'$ contains distinct vertices $i,j$, where $0 \leq i < j \leq n-1$, such that $i \equiv j \pmod{d}$. Denote by $Z$ the cycle subdigraph of $\gp(\Circ(n,\{1,2,\ldots,d\}))$ obtained by including only arcs associated with the generator $1$. Choose a congruent pair $i,j$ so that the distance $d_Z(i,j)$ in $Z$ is as small as possible. Then by symmetry we can assume that $i = 0$ and $j = j'd$ for some $j'$. Suppose that $k \in S' \cap [j+1,n-1]$, where $k = q'd+t, 0 \leq t \leq d-1$. Then a shortest $0,k$-path is given by $0,d,\ldots ,j'd,\ldots ,q'd$, followed by the arc $q'd \rightarrow q'd+t$ if $t \neq 0$, which would contain three vertices of $S'$. Therefore $S' \subseteq [0,j]$. By the minimality of $d_Z(i,j)$, no vertices of $S'-[0,j]$ are congruent $\pmod{d}$ and none are congruent to $0 \pmod{d}$. Therefore $|S'| \leq d+1$, establishing the upper bound. The upper and lower bounds coincide when $n\equiv 1\pmod{d}$, implying the final equality.
	\end{proof}
	For many values of $n$ and $d$, we can give a much stronger lower bound.
	
	\begin{theorem}\label{thm:ncong2}
		If $n \equiv a \pmod{d}$ where $a\geq 2$, $\gp(\Circ(n,\{1,2,\ldots,d\})) \geq \left \lceil \frac{d+1}{a} \right \rceil $.
	\end{theorem}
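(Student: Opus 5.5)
The plan is to build an explicit general position set of $\Circ(n,\{1,2,\ldots,d\})$ of the required size, by choosing the \emph{gaps} between consecutive chosen vertices (in cyclic order) carefully. Write $n = qd + a$ with $2 \leq a \leq d-1$. This is the same as $r=a$ in the notation of the subsection. Since $n > d$, we have $q \geq 1$.

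\textbf{Step 1: the distance formula.} For $x \neq y$ we have $d(x,y) = \lceil ((y-x) \bmod n)/d \rceil$, since each arc advances by at most $d$ around the cycle. Now let $S = \{x_1 < x_2 < \cdots < x_m\}$ (cyclically) and take three distinct vertices $x,y,z \in S$.
\begin{itemize}
\item If $y$ does not lie cyclically between $x$ and $z$, then going from $x$ to $y$ to $z$ travels $n + ((z-x) \bmod n)$ steps forward. Since $n > d$, the sum $d(x,y)+d(y,z)$ strictly exceeds $d(x,z)$, so no shortest $x,z$-path passes through $y$.
\item If $y$ does lie between them, set $A = y-x$ and $B = z-y$ (taken mod $n$), with $A+B < n$. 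Write $A = \alpha d - s$ and $B = \beta d - t$, where $0 \leq s,t \leq d-1$. Then
\[ \left\lceil \tfrac{A+B}{d} \right\rceil = \alpha + \beta - \left\lfloor \tfrac{s+t}{d} \right\rfloor. \]
So $y$ lies on no shortest $x,z$-path if and only if $s + t \leq d-1$. Equivalently, the residues of $A$ and $B$, taken in the range $\{1,\ldots,d\}$, sum to at most $d$.
\end{itemize}

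\textbf{Step 2: reduce to a residue condition on gaps.} Let $g_1,\ldots,g_m$ be the cyclic gaps, so $\sum g_i = n$. Let $c_i \in \{1,\ldots,d\}$ be the residue of $g_i$ mod $d$. Any $A$ and $B$ as above are sums of two adjacent blocks of consecutive gaps, which together use at most $m-1$ gaps. If every block of at most $m-1$ consecutive gaps has raw residue sum $\sum c_i \leq d$, then the residues of $A$ and $B$ are exactly their raw residue sums. Their total is at most $d$, so the condition of Step 1 holds. It therefore suffices that $C - c_i \leq d$ for every $i$, where $C = \sum_i c_i$.

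\textbf{Step 3: choose the gaps.} Take $C = d + a$, which is consistent with $n \equiv a \pmod d$. Require every $c_i \geq a$; then $C - c_i \leq d$ holds for all $i$. Set $m = \lfloor d/a \rfloor + 1$, and note that $m \geq \lceil (d+1)/a \rceil$. Put $c_1 = \cdots = c_{m-1} = a$ and $c_m = d + a - (m-1)a$, which lies in $[a, 2a-1]$.
\begin{itemize}
\item If $c_m \leq d$, we are done with this step.
\item If $c_m > d$, redistribute the excess among the other gaps, keeping every residue in $[a,d]$. This is possible because $ma \leq d + a \leq md$.
\end{itemize}
Finally set $g_i = c_i$ for $i < m$ and $g_m = c_m + (q-1)d$. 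This gives $\sum g_i = d + a + (q-1)d = n$, and each $g_i \geq 1$ because $q \geq 1$.

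The main obstacle is Step 1: getting the exact criterion for when $y$ lies on a shortest $x,z$-path, including the wrap-around case and the bookkeeping of residues in the range $1,\ldots,d$. Once that criterion is in hand, the construction in Steps 2 and 3 is mechanical, and yields a general position set of size $m \geq \lceil (d+1)/a\rceil$.
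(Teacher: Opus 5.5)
Your argument is correct once one inverted sentence is fixed, and it produces exactly the paper's set. With $c_1=\cdots=c_{m-1}=a$, the chosen vertices are $0,a,2a,\ldots,\lfloor d/a\rfloor a=d-b$, where $b=d \bmod a$. That is $\{0,a,\ldots,d\}$ or $\{0,a,\ldots,d-b\}$, as in the paper.

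The branch $c_m>d$ never arises. Since $a\le d-1$ forces $\lfloor d/a\rfloor\ge 1$, you get $c_m=a+b\le d$, so the redistribution step can be dropped.

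Where you genuinely differ is the verification.
\begin{itemize}
\item The paper just observes that for $i<j$ in $S$ one has $d(i,j)=1$ and $d(j,i)=q$. Any path through a third vertex of $S$ therefore has length $2$, $q+1$ or $2q$, each exceeding the relevant distance, exactly as in Lemma~\ref{lem:gpcirc}.
\item You instead derive a criterion valid for an arbitrary cyclic gap sequence: a vertex $y$ cyclically between $x$ and $z$ avoids every shortest $x,z$-path if and only if the residues of $A$ and $B$ in $\{1,\ldots,d\}$ sum to at most $d$. You then reduce this to the sufficient condition $C-c_i\le d$.
\end{itemize}
Your route is heavier than needed for this particular set. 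However, it is reusable: it would let you test other gap patterns or search for larger general position sets in these circulants. The paper's route is shorter but tied to the specific set.

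The slip is in Step 1, where you write that $y$ lies on no shortest $x,z$-path if and only if $s+t\le d-1$. This is inverted. If $s+t\le d-1$, then $\lceil (A+B)/d\rceil=\alpha+\beta=d(x,y)+d(y,z)$, so $y$ \emph{does} lie on a shortest path. The residue condition you call equivalent reads $(d-s)+(d-t)\le d$, i.e.\ $s+t\ge d$, and that is the correct criterion. Steps 2--3 only use the residue form, so the proof goes through after that sentence is corrected.
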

	\begin{proof}
		Let $n = qd + a$. Suppose first that $d$ is a multiple of $a$ and set $S = \{ 0,a,2a,\ldots,d-a,d\} $. If $i,j \in S$, where $j > i$, then $d(i,j) = 1$ and $d(j,i) = q$. It is now easy to check that $S$ is in general position as in the proof of Lemma~\ref{lem:gpcirc}. The cases of other congruence classes of $d$ mod $a$ are similar; if $d\equiv b\pmod{a}$ then the relevant set is $S = \{ 0,a,2a,\ldots,d-b\}$.
	\end{proof}
	Computation shows that the larger of the sets exhibited in Lemma~\ref{lem:gpcirc} and Theorem~\ref{thm:ncong2} (when $n \equiv a \pmod{d}, a\geq 2$) is optimal for $n \leq 50$ and $d \leq 15$, and we conjecture that this holds for all $n$ and $d$.

	\subsection{Kautz digraphs}
	Let $m\geq 3$ and $k\geq 2$. The \emph{Kautz digraph} $\Ka(m,k)$ has vertex set consisting of all words of length $k$ over an alphabet of $m$ characters, with the property that no two consecutive characters in the word are equal. Thus the order of the graph is $m(m-1)^{k-1}$. There is a directed edge from a word $a_1 a_2\ldots a_k$ to every word of the form $a_2\ldots a_k b$ where $b\neq a_k$. Thus the graph has degree $m-1$. The Kautz digraphs are of substantial interest in the degree-diameter problem, having the smallest possible diameter amongst all digraphs of their order and degree. In the case $k=2$, the Kautz digraph $\Ka(m,2)$ can be viewed as the line digraph of the complete (undirected) graph $K_m$.
	
	We use the alphabet $1,2,\ldots 9,a,b,\ldots$ in our examples. Figure~\ref{fig:Ka52} shows the Kautz digraph $\Ka(5,2)$ with a maximal gp-set of cardinality 8 highlighted. In Table~\ref{tab:kautz} we show the gp numbers of $\Ka(m,k)$ for $k = 3,4$ and some small values of $m$, determined by computer. 
	
	\begin{theorem}\label{thm:Kautz}
		For $m \geq 3$, $\gp (\Ka(m,2)) = \left \lfloor \frac{m^2}{3} \right \rfloor $.
	\end{theorem}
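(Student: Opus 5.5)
Since $\Ka(m,2)$ is the line digraph of the complete digraph on $[m]$, I will translate the problem entirely into a statement about arc sets of the complete digraph $K_m$. A vertex $ab$ of $\Ka(m,2)$ is an arc $a \rightarrow b$ of $K_m$, and $ab \rightarrow cd$ in $\Ka(m,2)$ iff $c = b$. First I compute the distances. $d(ab,cd) = 1$ if $c=b$. Otherwise the walk $ab, bc, cd$ is legal (as $b \neq c$ and $c \neq d$), so $d(ab,cd)=2$. Hence $\diam(\Ka(m,2)) = 2$, and every geodesic has length at most $2$.

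Geodesics of length $1$ contain only two vertices, so a set $S$ (viewed as an arc set $F \subseteq A(K_m)$) is in general position iff no length-$2$ geodesic has all three vertices in $S$. By the distance computation, the path $ab, bc, cd$ is a geodesic exactly when $cd \neq ab$, because $b \neq c$ holds automatically. So $F$ is in general position iff $F$ contains no three arcs $a\rightarrow b$, $b \rightarrow c$, $c \rightarrow d$ with $(c,d) \neq (a,b)$; here $a=d$ or $a=c$ is allowed.

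Unpacking this condition for a fixed arc $b\rightarrow c \in F$: either $b$ has in-degree $0$ in $F$, or $c$ has out-degree $0$ in $F$, or the only arc of $F$ into $b$ is $c \rightarrow b$ and the only arc of $F$ out of $c$ is $c\rightarrow b$. In the last case, applying the same reasoning to $c \rightarrow b$ shows that $\{b,c\}$ spans an isolated $2$-cycle of $F$.

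\textbf{Upper bound.} I argue by induction on $m$. If $F$ has an isolated $2$-cycle, delete its two vertices. This gives $|F| \le 2 + \lfloor (m-2)^2/3 \rfloor \le \lfloor m^2/3\rfloor$, where the final inequality is a routine check (it amounts to $4m \ge 10$ up to floors), with small bases $m \le 3$ checked directly. Otherwise every arc $u\rightarrow v\in F$ has $d^-_F(u)=0$ or $d^+_F(v)=0$. Partition $[m]$ into $A$ (in-degree $0$ in $F$ but not isolated), $B$ (out-degree $0$, including isolated vertices), and $C$ (the rest). Then:
\begin{itemize}
\item an arc leaving $A$ cannot enter $A$;
\item an arc entering $B$ cannot leave $B$;
\item no arc joins two vertices of $C$, and no arc goes from $C$ into $A$ or from $B$ anywhere.
\end{itemize}
Hence $F \subseteq (A\times B) \cup (A \times C) \cup (C \times B)$, and so $|F| \le |A||B|+|A||C|+|B||C|$. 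For $x+y+z=m$ in integers this is maximised by parts as equal as possible, giving $\lfloor m^2/3\rfloor$.

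\textbf{Lower bound.} Split $[m]$ into parts $A,C,B$ of sizes as equal as possible and take all arcs $A\rightarrow C$, $A\rightarrow B$ and $C \rightarrow B$. A forbidden triple needs a middle arc $b\rightarrow c$ with $b$ having an in-arc and $c$ having an out-arc in $F$. That would force $b,c \in C$, but no such arcs exist. So $F$ is in general position, and its size is $\lfloor m^2/3\rfloor$.

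I expect the main obstacle to be the careful bookkeeping of the degenerate walks. The cases $a=c$ and $a=d$ must be shown to be genuinely forbidden, while the cyclic walk $a\rightarrow b\rightarrow a\rightarrow b$ is not, and this leads to the isolated $2$-cycle exception handled above.
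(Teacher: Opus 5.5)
Your proof is correct. The construction and the 2-cycle reduction match the paper, but the core of the upper bound takes a different route.

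\textbf{What is shared.} The lower-bound construction is the same orientation of the tripartite Tur\'an graph. Both arguments dispose of a pair $ab,ba$ in the set by induction on $m-2$. The paper shows directly that no other word of $S'$ may use the letters $a$ or $b$, which is exactly your ``isolated 2-cycle'' conclusion.

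\textbf{Where you differ.} After the 2-cycle reduction, the paper forms the auxiliary graph on the alphabet. It invokes Tur\'an's theorem to obtain a $K_4$ whenever $|S'|>\lfloor m^2/3\rfloor$. It then checks by cases that every orientation of $K_4$ contains a directed triangle or a directed path of length $3$ (in effect R\'edei's theorem that tournaments have Hamilton paths); either gives three words on a geodesic. You instead prove a local lemma: every arc $u\rightarrow v$ of $F$ has $d^-_F(u)=0$ or $d^+_F(v)=0$. This places $F$ inside $A\times B\cup A\times C\cup C\times B$, so the bound reduces to maximising $xy+yz+zx$.

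\textbf{What each route buys.} Your argument is self-contained: it needs no Tur\'an theorem, no orientation case analysis, and no computer check for $m=3,4$. It also gives more. It describes every general position set of $\Ka(m,2)$ as isolated 2-cycles together with a subset of a three-layer configuration, from which the extremal sets can be read off. The paper's route is shorter to write once Tur\'an's theorem is taken as a black box.

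\textbf{A small bookkeeping slip.} With bases ``$m\le 3$'', your induction step at $m=4$ appeals to the case $m-2=2$. There the bound $\lfloor 4/3\rfloor=1$ is false, since $\{12,21\}$ is in general position in $\Ka(2,2)$. Treat $m=4$ directly: the two remaining letters carry at most two arcs, so $|F|\le 4<5$.
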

	\begin{proof}
		First we give a construction of the claimed size. Divide the alphabet of $m$ characters into three parts $A_1,A_2,A_3$ differing in size by at most 1 and form a digraph $K(m)$ by adding every arc $x \rightarrow y$, where $x \in A_i$, $y \in A_j$, $i < j$ (observe that this is an orientation of the tripartite Tur\'{a}n graph). Let $S \subset V(\Ka (m,2))$ consist of each word $xy$ where $x \rightarrow y$ in $K(m)$. The graph $\Ka (m,2)$ has diameter 2. Suppose that $P$ is a path of length 2 with all vertices in $S$. If the initial vertex of $P$ is of the form $xy$, $x \in A_1, y \in A_2$, then the next vertex of $P$ begins with the letter $y$, and to lie in $S$ this vertex must have the form $yz$, where $z \in A_3$. However, in this case the third vertex of $P$ begins with the letter $z$, and there are no words in $S$ of this form. Similarly, $P$ cannot begin with a word $yz$ or $xz$, where $x \in A_1, y \in A_2, z \in A_3$, since the next vertex of the path would start with the letter $z$. 
		
		Now we show that this construction is optimal. A computer check showed that the claimed result is true for $m =3,4$. Suppose that the result is true for $\Ka (m',2)$ for $m' < m$ and let $S'$ be a largest general position set of $\Ka (m,2)$. Suppose that $S'$ contains two words with the same letters, for example $12$ and $21$. Now $S'$ cannot have any other vertices containing the letters $1$ or $2$; for instance, if $31 \in S$, then $31 \rightarrow 12 \rightarrow 21$ would be a shortest path through three vertices of $S'$. Therefore the remaining $\gp (\Ka (m,2))-2$ vertices of $S'$ lie in the isometric subdigraph of $\Ka (m,2)$ induced by the words not containing the letters $1$ and $2$. This subdigraph is isomorphic to $\Ka (m-2,2)$, and we conclude by induction that we would have $|S'| \leq 2+\left \lfloor \frac{(m-2)^2}{3} \right \rfloor < \left \lfloor \frac{m^2}{3} \right \rfloor$. 
		
		Now we associate with $S'$ the \emph{auxiliary digraph} $\Omega ^{\rightarrow }(S')$ with vertex set $[1,m]$ and an arc $i \rightarrow j$ when $ij \in S'$. The \emph{auxiliary graph} $\Omega (S')$ is the underlying undirected graph of $\Omega ^{\rightarrow }(S')$. By the preceding argument, $\Omega ^{\rightarrow }(S')$ and $\Omega (S')$ have the same size. Suppose that $|S'| > \left \lfloor \frac{m^2}{3} \right \rfloor $. Then by Tur\'{a}n's Theorem the graph $\Omega (S')$ must contain a copy of $K_4$. Denote the vertices of this $K_4$ by $\{ w,x,y,z\} $. In $\Omega (S')$ the copy of $K_4$ cannot contain a directed triangle $x \rightarrow y \rightarrow z \rightarrow x$, for otherwise $xy,yz,zx$ would be a shortest path containing three vertices of $S'$, and cannot contain a directed path of length 3, say $x \rightarrow y \rightarrow w \rightarrow z$, for this corresponds to the shortest path $xy,yw,wz$ in $\Ka (m,2)$. Suppose that there is a source (say it is $x$) in $\Omega ^{\rightarrow }(S')$. Then we can assume that $y \rightarrow w$ in $\Omega (S')$. Now it is easy to see that the remaining edges cannot be oriented without creating a directed triangle or path of length 3. For the same reason there can be no sink. Hence we can assume that $x \rightarrow y$, $x \rightarrow z$ and $w \rightarrow x$. Now the only way to avoid directed triangles and paths of length 3 is to make $z$ a sink. Hence $|S'| \leq \left \lfloor \frac{m^2}{3} \right \rfloor $ and the proof is complete. 
	\end{proof}
	
	For $k=3$, over the range calculated we have that $\gp(\Ka(m,2))$ is the sum of squares up to $k-1$ (see \url{https://oeis.org/A000330}). 
	
	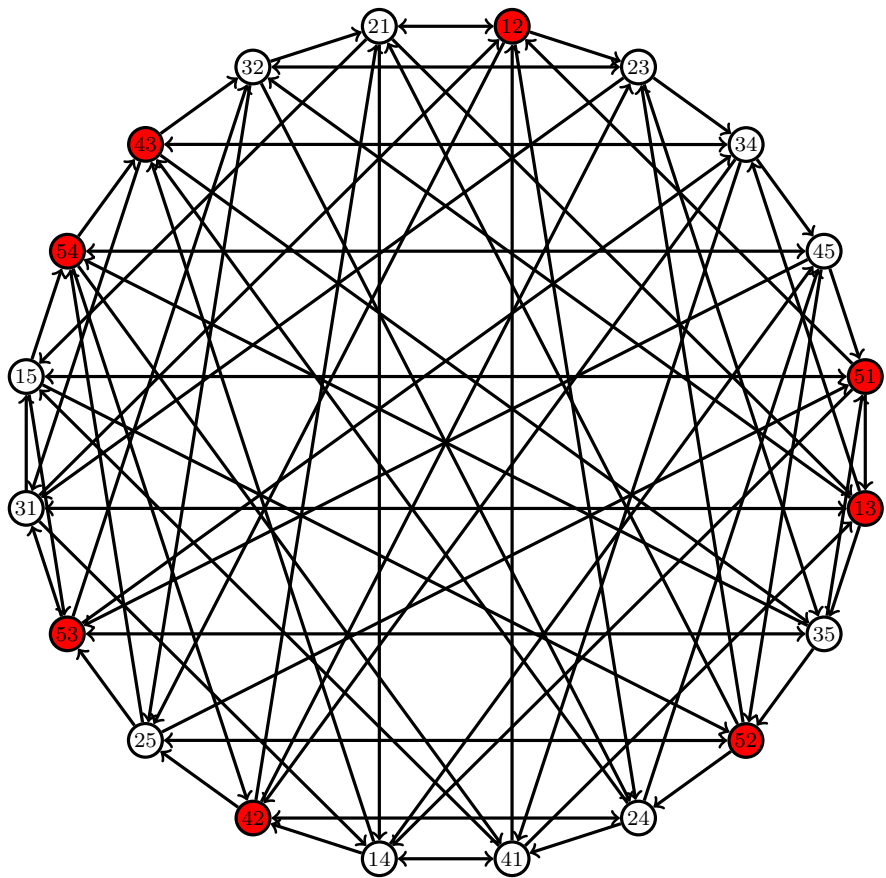
\begin{figure}
		\centering
		\input{Ka52}
		\caption{The Kautz digraph $\Ka(5,2)$ and a maximal gp-set}
		\label{fig:Ka52}
	\end{figure}
	
	\begin{table}
		\centering
		\begin{tabular}{|cccp{12cm}|}
			\hline
			$k$ & $m$ & gp & Example gp-set \\
			\hline
			$3$ & $3$ & $5$ &$121, 123, 313, 321, 323$ \\
			& $4$ & $14$ & $121, 124, 131, 132, 134, 231, 232, 234, 414, 421, 424, 431$\par
			$432, 434$ \\
			& $5$ & $30$ & $121, 124, 131, 132, 134, 135, 151, 152, 154, 231, 232, 234$\par
			$235, 251, 252, 254, 414, 421, 424, 431, 432, 434, 435, 451$\par
			$452, 454, 531, 532, 534, 535$ \\
			& $6$ & $55$ & $212, 213, 214, 215, 216, 232, 242, 243, 246, 252, 253, 254$\par
			$256, 262, 263, 312, 313, 314, 315, 316, 342, 343, 346, 352$\par
			$353, 354, 356, 362, 363, 412, 413, 414, 415, 416, 452, 453$\par
			$454, 456, 512, 513, 514, 515, 516, 612, 613, 614, 615, 616$\par
			$642, 643, 646, 652, 653, 654, 656$ \\
			& $7$ & $91$ & $212, 213, 214, 215, 216, 217, 252, 253, 254, 262, 263, 264$\par
			$265, 272, 273, 274, 275, 276, 312, 313, 314, 315, 316, 317$\par
			$323, 324, 343, 352, 353, 354, 362, 363, 364, 365, 372, 373$\par
			$374, 375, 376, 412, 413, 414, 415, 416, 417, 423, 424, 452$\par
			$453, 454, 462, 463, 464, 465, 472, 473, 474, 475, 476, 512$\par
			$513, 514, 515, 516, 517, 562, 563, 564, 565, 572, 573, 574$\par
			$575, 576, 612, 613, 614, 615, 616, 617, 672, 673, 674, 675$\par
			$676, 712, 713, 714, 715, 716, 717$ \\
			\hline
			$4$ & $3$ & $7$ & $1312, 1313, 2121, 2132, 2312, 2313, 2321$ \\
			& $4$ & $27$ & $1312, 2124, 2132, 2134, 2141, 2142, 2143, 2312, 2324, 2341, 2342, 2343$\par
			$2412, 2424, 2432, 3132, 3134, 3141, 3142, 3143, 4132, 4134, 4141, 4142$\par
			$4312, 4342, 4343$ \\
			\hline
		\end{tabular}
		\caption{gp numbers of Kautz digraphs $\Ka(m,k)$}
		\label{tab:kautz}
	\end{table}
	
	\subsection{Permutation digraphs}
	
	Let $d\geq 2$ and $k\geq 2$. The \emph{permutation digraph} $\Pe(d,k)$ has a vertex set consisting of all words of length $k$ chosen from the alphabet of $\{ 0,1,\ldots ,d+k-1\} $, where no letter is used more than once in a word. A word $(x_1,x_2,\ldots,x_k)$ has an arc to any word of the form $(x_2,\ldots,x_k,y)$ where $y\notin\{x_1,\ldots,x_k\}$. Thus the graph is diregular of degree $d$, and has $(d+k)_k=(d+k)(d+k-1)\cdots(d+1)$ vertices. These digraphs are of interest because they have a unique path property: no two vertices have more than one directed path of length at most $k$ between them (we say the graph is $k$-\emph{geodetic}).
	
	\begin{theorem}
		For $d \geq 2$, $\gp (\Pe(d,2)) = \left \lceil \frac{(d+2)^2}{4} \right \rceil $.
	\end{theorem}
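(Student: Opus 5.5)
The plan mirrors the proof of Theorem~\ref{thm:Kautz}. Write $N=d+2$ for the size of the alphabet. I view a set $S\subseteq V(\Pe(d,2))$ through its auxiliary digraph $\Omega^{\rightarrow}(S)$ on the letters, with an arc $x\rightarrow y$ whenever the word $xy\in S$. The proof has three parts: first record the distances in $\Pe(d,2)$, then give the construction, then prove the upper bound by an extremal argument in the spirit of Mantel's theorem.

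\textbf{Distances.} A word $ab$ reaches $cd$ in one step iff $c=b$ and $d\neq a$. It reaches $cd$ in exactly two steps, by the unique path $ab\rightarrow bc\rightarrow cd$, iff $c\notin\{a,b\}$ and $d\neq b$. Otherwise the distance is $3$ or more; for instance $d(xy,yx)$ is larger, because $xy\not\rightarrow yx$. Since $\Pe(d,2)$ is $2$-geodetic, geodesics of length at most $2$ are unique. So a triple of $S$ lies on a short geodesic exactly when $\Omega^{\rightarrow}(S)$ contains a directed path $a\rightarrow b\rightarrow c$ (with $c\neq a$), or $a\rightarrow b\rightarrow c\rightarrow d$ such that $ab,bc,cd$ is a geodesic. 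Longer geodesics are then handled case by case.

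\textbf{Lower bound.} Split the alphabet into $A\cup B$ with $|A|=\lfloor N/2\rfloor$ and $|B|=\lceil N/2\rceil$, and take all words $ab$ with $a\in A$, $b\in B$. Every out-neighbour of such a word begins with a letter of $B$, so it is not in $S$. Likewise, any geodesic between two words of $S$ has all its internal vertices beginning with a letter of $B$ (or ending in $A$). Hence $S$ is in general position and has $\lfloor N^2/4\rfloor$ words. For odd $N$ the statement requires one extra word. I would look for it by allowing a single $2$-cycle $x\rightarrow y$, $y\rightarrow x$ in $\Omega^{\rightarrow}(S)$, or a single arc inside $B$. Such an arc is harmless because $d(xy,yx)$ is large and $xy\not\rightarrow yx$, and I would exploit this asymmetry, which is absent in the Kautz digraph. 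To pin down the exact extra word I would first check $N=5$ by hand, where the target is $7$.

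\textbf{Upper bound.} Let $S$ be a $\gp$-set and count $|S|$ as the number of arcs of $\Omega^{\rightarrow}(S)$. The forbidden configuration $a\rightarrow b\rightarrow c$ with $c\neq a$ says that every letter with positive in-degree and positive out-degree has all its in- and out-arcs confined to $2$-cycles with one partner. This should force the letters into sources, sinks, and a few $2$-cycles. A source/sink bipartition gives at most $\lfloor N^2/4\rfloor$ arcs. Each $2$-cycle $\{x,y\}$ isolates $x$ and $y$ from all other arcs, which costs much more than it gains, except possibly for a single $2$-cycle when $N$ is odd. I would formalise this by induction on $N$ as in Theorem~\ref{thm:Kautz}: if $S$ contains both $xy$ and $yx$, delete the letters $x,y$ and pass to the convex copy of $\Pe(d-2,2)$, bounding $|S|\le 2+\lceil (N-2)^2/4\rceil$. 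Otherwise $\Omega^{\rightarrow}(S)$ is an oriented graph with no directed $2$-path, which makes it bipartite between sources and sinks, and Mantel's theorem applies.

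The main obstacle is the odd case. There the inductive inequality $2+\lceil (N-2)^2/4\rceil\le\lceil N^2/4\rceil$ is tight, so the argument needs care. I also have to make sure the extra word in the construction does not create a length-$3$ geodesic through three vertices of $S$, since geodesics of length $3$ are not unique.
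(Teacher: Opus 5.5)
\textbf{The statement is false for odd $d$, so the odd case cannot be repaired.} You count the bipartite set correctly: $\{ab : a\in A,\ b\in B\}$ has $|A||B|=\lfloor N^2/4\rfloor$ words. The paper's own proof uses exactly this set and asserts it has $\lceil (d+2)^2/4\rceil$ elements, which is a miscount when $d$ is odd. The extra word you are looking for does not exist. For odd $d$ the true value is $\lfloor (d+2)^2/4\rfloor$; for example $\gp(\Pe(3,2))=6$, not $7$, so your planned hand check at $N=5$ will fail. Both candidate repairs break down concretely:
\begin{itemize}
\item A digon is the worst possible choice, not a harmless one. We have $d(xy,yx)=4$, and the geodesics $xy,yu,uv,vy,yx$ and $yx,xu,uv,vx,xy$ (with $u\neq v$ outside $\{x,y\}$) cover every other vertex. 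So a digon forces $|S|=2$; the paper uses this fact, and no induction on $\Pe(d-2,2)$ is involved.
\item An arc inside $B$ creates a letter $b$ with two in-arcs $a\to b$, $u\to b$ and an out-arc $b\to x$. The distance-$3$ geodesic $ab,bx,xu,ub$ then contains three words of $S$.
\end{itemize}

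\textbf{Your upper-bound sketch rests on a wrong forbidden pattern.} A directed $2$-path $a\to b\to c$ in $\Omega^{\rightarrow}(S)$ is just a pair of adjacent words $ab\rightarrow bc$, and that is harmless. Indeed $\{01,12,02\}$ is in general position, so letters with both in- and out-arcs do occur without digons. Length-$2$ geodesics only exclude directed $3$-walks $a\to b\to c\to e$ with $c\neq a$ and $e\neq b$.

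What actually drives the bound are the distance-$3$ geodesics $ab,bx,xa,av$ and $ab,bx,xu,ub$. In the absence of digons they show that a letter of out-degree at least $2$ has in-degree $0$, and a letter of in-degree at least $2$ has out-degree $0$. Combined with the $3$-walk condition, every letter with both in- and out-arcs lies in an isolated component on three letters carrying at most three arcs. This gives $|S|\le\max_k\{3k+\lfloor (N-3k)^2/4\rfloor\}=\lfloor N^2/4\rfloor$ for $N\ge 4$.

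For $N=5$ the argument is short. Seven arcs on five letters contain a triangle by Mantel's theorem. The triangle must be transitive, its middle letter isolates it, and at most $3+1$ arcs remain.

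With these corrected local conditions, your source/sink approach is a more elementary alternative to the paper's route through the Dirac--Erd\H{o}s diamond theorem. However, it proves $\gp(\Pe(d,2))=\lfloor (d+2)^2/4\rfloor$; the ceiling in the statement is correct only for even $d$.
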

	\begin{proof}
		Partition the alphabet $[0,d+1]$ into two parts $X_1,X_2$ with order differing by at most 1. Form a set $S$ by including the word $ij$ if and only if $i \in X_1$ and $j \in X_2$. This contains $\left \lceil \frac{(d+2)^2}{4} \right \rceil $ words. Observe that $S$ is an independent set, and so we only need to check the no-three-in-line property for two vertices at distance $\geq 4$ apart, but the vertices of $S$ are all at distance $\leq 3$. Hence $\gp (\Pe(d,2)) \geq \left \lceil \frac{(d+2)^2}{4} \right \rceil $. 
		
		Now suppose for a contradiction that $\Pe(d,2)$ has a general position set $S'$ with order $> \left \lceil \frac{(d+2)^2}{4} \right \rceil $. We form the auxiliary digraph and graph $\Omega ^{\rightarrow }(S')$ and $\Omega (S')$ in the same manner as the proof of Theorem~\ref{thm:Kautz}, but with vertex set $[0,d+1]$. Suppose that $\Omega (S')$ contains a digon, or, equivalently, $S'$ contains two words on the same two letters, for example $01$ and $10$. The distance between these vertices in $\Pe (d,2)$ is 4, and it is easy to verify that every vertex of $P(d,2)$ lies on either a shortest path from $01$ to $10$ or a shortest path from $10$ to $01$~\cite{Brunat}, so that such a general position set would contain just two vertices. Therefore the size of the graph $\Omega (S')$ is the same as the size of the digraph $\Omega ^{\rightarrow }(S')$. 
		
		As $\Omega (S')$ has size $> \left \lceil \frac{(d+2)^2}{4} \right \rceil $, it follows from~\cite{Dirac,Erdos} that for $d \geq 5$, $\Omega (S')$ contains a diamond, i.e.\ a copy of $K_4^-$. $\Omega ^{\rightarrow }(S')$ cannot contain a directed triangle $i,j,k,i$, for otherwise $S'$ would contain all three vertices of the shortest path $ij,jk,ki$. Up to symmetry there are six remaining orientations of $K_4^-$ to consider, and each of them leads to a similar violation of the no-three-in-line property. It follows that $|S'| \leq \left \lceil \frac{(d+2)^2}{4} \right \rceil$ for $d \geq 5$. The cases $d \leq 4$ are easily checked by computer.

	\end{proof}

	\begin{table}
		\centering
		\begin{tabular}{|cccp{11cm}|}
			\hline
			$k$ & $d$ & gp & Example gp-set \\
			\hline
			$3$ & $2$ & $12$ & $120, 210, 301, 302, 304, 310, 320, 401, 402, 403, 410, 420$ \\
			& $3$ & $24$ & $021, 024, 031, 034, 051, 054, 201, 204, 231, 234, 251, 254$\par
			$301, 304, 321, 324, 351, 354, 501, 504, 521, 524, 531, 534$\\
			& $4$ & $40$ & $501, 502, 503, 504, 510, 512, 513, 514, 520, 521, 523, 524$\par
			$530, 531, 532, 534, 540, 541, 542, 543, 601, 602, 603, 604$\par
			$610, 612, 613, 614, 620, 621, 623, 624, 630, 631, 632, 634$\par
			$640, 641, 642, 643$\\
			\hline
		\end{tabular}
		\caption{$\gp $-numbers of permutation digraphs}
		\label{tab:permdigraph}
	\end{table}
	
	In Table~\ref{tab:permdigraph}, we show the $\gp $-numbers of permutation digraphs for $k = 3$ and $2 \leq d \leq 4$, determined by computer. The alphabet of $d+k$ letters used for the example gp-sets is $0,1,\ldots,d+k-1$. The results suggest that $\gp (\Pe (d,3)) = 2d(d+1)$. We prove a lower bound for $k \geq 3$ and $d \geq 2k$ that includes this as a special case.
	
	\begin{theorem}\label{thm:permutationlowerbound}
		For $k \geq 3$ and $d \geq 2k$, $\gp (\Pe (d,k)) \geq 2(d+k-2)_{k-1}$.
	\end{theorem}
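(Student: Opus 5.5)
The plan is to give an explicit construction of the right size and then check the no-three-on-a-geodesic condition directly. The computed gp-set for $\Pe(4,3)$ in Table~\ref{tab:permdigraph} suggests the construction. Fix two letters $a,b$ of the alphabet $[0,d+k-1]$. Let $S$ be the set of words $(c,x_2,\ldots ,x_k)$ with $c \in \{a,b\}$ and $x_2,\ldots ,x_k$ distinct letters from the remaining $d+k-2$ letters. Then $|S| = 2(d+k-2)_{k-1}$, which matches the claimed bound (for $k=3$ it gives $2d(d+1)$). Every word of $S$ contains exactly one special letter, and that letter is in the first position.

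It is convenient to describe a walk of length $L$ in $\Pe(d,k)$ from $u=(x_1,\ldots ,x_k)$ by a letter sequence $x_1\ldots x_kz_1\ldots z_L$. The vertex reached at step $t$ is the window of length $k$ starting at position $t+1$, and every window must consist of distinct letters. A path ends at $v$ exactly when the last $k$ letters spell $v$.

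\emph{Step 1: distances between words of $S$ are at least $k$.} Let $u,v \in S$ be distinct. A path of length $s<k$ requires the suffix $x_{s+1}\ldots x_k$ of $u$ to equal the prefix of $v$ of length $k-s$. But $v$ begins with $a$ or $b$, and no letter of $u$ after the first is special. Hence $d(u,v) \ge k$, so the last $k$ letters of the sequence for any $u,v$-geodesic are exactly $v$.

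\emph{Step 2: identify the internal vertices that could lie in $S$.} An internal vertex at step $t \le k-1$ begins with $x_{t+1}$, which is not special, so it is not in $S$. An internal vertex at step $t \ge k$ is a window containing the letter $c_v \in \{a,b\}$ at its position $L-t+1$, which is not the first position whenever $t<L$. Such a vertex lies in $S$ only if $c_v$ is its first letter, which is impossible, or if it begins with a special letter and contains $c_v$ elsewhere. The latter gives two special letters in one word, so the word is not in $S$. Since $k \ge 3$, each window of length $k$ that is not $v$ itself and starts at step $t \ge k$ either contains the first letter of $v$ in a non-initial position, or lies strictly inside the free block $z_1\ldots z_{L-k}$.

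So the only remaining danger is a geodesic whose free block $z_1\ldots z_{L-k}$ has length at least $k$ and contains a window of the form $(\text{special},\ \text{non-special letters})$.

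\emph{Step 3 (main obstacle): bound $L=d(u,v)$.} I expect to show $d(u,v) \le 2k-1$ using the hypothesis $d \ge 2k$, so the free block has length less than $k$ and no window fits inside it. The idea is to construct a walk $u \to v$ explicitly. Choose a block $z_1\ldots z_{m}$ with $m \le k-1$ of fresh letters outside the at most $2k$ letters of $u$ and $v$; there are $d+k-2k \ge k$ such letters available. The block should separate the repeated letters of $u$ and $v$ enough that every window has distinct letters. A letter shared by $u$ and $v$ is separated by the length of the free block plus positional offsets, so taking $m=k-1$ fresh letters suffices: any window of length $k$ then meets at most one of the two words' occurrences of a common letter. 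This gives $L \le 2k-1$, completing Step 2. The delicate point is the counting of available fresh letters, which is exactly where $d \ge 2k$ is used. If this bound proves awkward, the fallback is to argue directly: a geodesic whose free block contains such a window could be shortcut, because $u$ can be connected to $v$ through fresh letters in fewer steps. This contradicts minimality. Finally, Steps 1–3 together show that no $u,v$-geodesic has an internal vertex in $S$, so $S$ is in general position and $\gp(\Pe(d,k)) \ge 2(d+k-2)_{k-1}$.
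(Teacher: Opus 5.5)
Your argument is correct, and its skeleton is the paper's. Your set is the image of the paper's set $\{i_1\cdots i_{k-1}\epsilon : \epsilon\in\{0,1\}\}$ under word reversal, which is an isomorphism from $\Pe(d,k)$ onto its converse. Both proofs rest on $k\le d(u,v)\le 2k-1$ for distinct $u,v\in S$.

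Where you differ is the upper bound. The paper cites Brunat for $\diam(\Pe(d,k))=2k$ when $d\ge 2k$, and then only asserts that no two vertices of $S$ are at distance $2k$. Your Step 3 proves $d(u,v)\le 2k-1$ directly with the walk $u\,z_1\cdots z_{k-1}\,v$. That makes your proof self-contained, and it only needs $d-k\ge k-1$ fresh letters, i.e.\ $d\ge 2k-1$.

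Two corrections, neither fatal. First, an arc requires $y\notin\{x_1,\ldots,x_k\}$, so a letter sequence describes a walk exactly when every $k+1$ consecutive letters are distinct, not every $k$. Step 3 as written therefore checks the wrong condition. Your walk still qualifies: the only $(k+1)$-window meeting both $u$ and $v$ is $x_k z_1\cdots z_{k-1}c_v$, and $x_k\neq c_v$ because one is special and the other is not. Second, Step 2 is unnecessary. Given Steps 1 and 3, a vertex $w\in S$ on a shortest $u,v$-path would force $d(u,v)=d(u,w)+d(w,v)\ge 2k$, which is how the paper concludes.
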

	\begin{proof}
		Take the vertices of $S$ to be $i_1i_2\ldots i_{k-1}\epsilon $, where $\epsilon = 0,1$ and $i_1i_2\ldots i_{k-1}$ is any permutation of length $k-1$ from the alphabet $\{ 0,\ldots ,d+k-1\} -\{ 0,1\} $. This has the stated order. Each pair of vertices in this set is at least distance $k$ apart. It is proven in~\cite{Brunat} that for $d \geq 2k$ the diameter of $\Pe (d,k)$ is $2k$. However, no pair of vertices from our set is at distance $2k$ apart, so we cannot have three vertices of $S$ on a common shortest path.
	\end{proof}
	
	We conjecture that the lower bound from Theorem~\ref{thm:permutationlowerbound} is exact. 
	
	\subsection{Trees}\label{subsec:trees}
	
	Finding the general position of an oriented tree appears to be a difficult problem. As each leaf of a tree is a source or a sink, Lemma~\ref{lem:ext} implies the following bound.  
	
	\begin{lemma}\label{lem:leaf bound}
		For any oriented tree $T^{\rightarrow }$ with leaf number $\ell (T^{\rightarrow })$, the general position number of $T^{\rightarrow }$ is bounded below by $\gp (T^{\rightarrow }) \geq \ell (T^{\rightarrow })$.
	\end{lemma}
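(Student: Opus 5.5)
The plan is to deduce the bound directly from Lemma~\ref{lem:ext}, which gives $\gp (T^{\rightarrow }) \geq |\ext (T^{\rightarrow })|$. It therefore suffices to show that every leaf of $T^{\rightarrow }$ is an extreme vertex, since then $\ell (T^{\rightarrow }) \leq |\ext (T^{\rightarrow })|$.

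Let $u$ be a leaf, so $u$ is incident with exactly one arc, which is either $u \rightarrow w$ or $w \rightarrow u$ for its unique neighbour $w$. In the first case $d^-(u) = 0$ and $u$ is a source; in the second case $d^+(u)=0$ and $u$ is a sink. Either way $u \in \ext (T^{\rightarrow })$.

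Alternatively, one can argue directly rather than through Lemma~\ref{lem:ext}: the set $L$ of leaves is in general position because no leaf can be an internal vertex of a directed path. An internal vertex needs both an in-arc and an out-arc, but a leaf has total degree $1$. Hence no geodesic between two vertices of $L$ passes through a third vertex of $L$.

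There is no real obstacle here. The only point to check is the degenerate case of the tree $K_2$ (two leaves, one arc). There the bound gives $\gp \geq 2$, which is consistent with Proposition~\ref{prop1}.
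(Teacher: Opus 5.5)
Your proposal is correct and is essentially the paper's argument: every leaf is incident with exactly one arc, so it is a source or a sink and hence extreme, and Lemma~\ref{lem:ext} then gives $\gp (T^{\rightarrow }) \geq |\ext (T^{\rightarrow })| \geq \ell (T^{\rightarrow })$. Your direct alternative (no leaf can be an internal vertex of any directed path) is just the reason Lemma~\ref{lem:ext} holds, restricted to leaves.
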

	
	We show that Lemma~\ref{lem:sourcesandsinks} gives the exact value of the general position number for arborescences. An \emph{out-arborescence} rooted at $x \in V(T)$ is an oriented tree $T^{\rightarrow }$ such that all arcs are oriented away from $x$. 
	
	\begin{theorem}\label{thm:arborescence}
		If $T^{\rightarrow }$ is an out-arborescence, then $\gp (T^{\rightarrow }) = |\sink (T^{\rightarrow })|+|\nsink (T^{\rightarrow })|$.
	\end{theorem}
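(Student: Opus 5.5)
The lower bound is immediate from Lemma~\ref{lem:sourcesandsinks}, since $\sink(T^{\rightarrow}) \cup \nsink(T^{\rightarrow})$ is in general position. So the work lies in the upper bound: any general position set $S$ of an out-arborescence $T^{\rightarrow}$ rooted at $x$ satisfies $|S| \leq |\sink| + |\nsink|$. In an out-arborescence every vertex other than $x$ has in-degree exactly 1, and the unique directed path from $u$ to $v$ exists exactly when $u$ is an ancestor of $v$. Hence geodesics are exactly the ancestor-descendant paths. Consequently $S$ is in general position if and only if no vertex of $S$ has both a proper ancestor in $S$ and a proper descendant in $S$. Equivalently, the poset (ancestor order) restricted to $S$ has no chain of length 3.

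The plan is to build an injection from $S$ into $\sink \cup \nsink$, or more simply to show $|S| \le$ the number of sinks plus near-sinks by a leaf-assignment argument. I would first note that the sinks are the leaves other than possibly $x$, and a near-sink is a non-leaf vertex all of whose children are leaves. For each $v \in S$, consider the subtree $T_v$ of descendants of $v$.

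The vertices of $S$ split into two types. The \emph{minimal} ones have no descendant in $S$. For two incomparable minimal vertices the subtrees are disjoint. The remaining vertices are \emph{non-minimal}: each has a descendant in $S$ but no ancestor in $S$, so they form an antichain, and their subtrees are disjoint too. For a non-minimal $v$, its subtree contains some minimal $w \in S$, and all of these $w$ are proper descendants of $v$.

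I would then assign to each $v \in S$ a distinct element of $\sink \cup \nsink$, going down the tree as follows.
\begin{itemize}
\item For a minimal $w$ with a proper ancestor in $S$, descend from $w$ to a sink in $T_w$ (choose any leaf below $w$, or $w$ itself if $w$ is a sink).
\item For a non-minimal $v$, charge the near-sink that is the parent of a deepest leaf in $T_v$.
\end{itemize}
The main obstacle is making these choices disjoint. A near-sink charged by a non-minimal $v$ might lie inside some $T_w$, and its leaf children might be the sinks already charged to the minimal $w$'s.

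What I would try first is a cleaner count: prove $|S \cap V(T_v)| \le s(T_v) + n(T_v)$ by induction on the subtree, where $s$ and $n$ count sinks and near-sinks. The induction carries a strengthened hypothesis that distinguishes whether the root of the subtree is in $S$ or has an ancestor in $S$, since those constraints affect what can be chosen below. Concretely, let $f(T)$ be the largest $S$ in $T$ with no 3-chain, and $g(T)$ the largest $S$ which is an antichain (used when an ancestor is already in $S$). One expects $g(T) = \#\text{leaves}$ and $f(T) = s + n$, combining over children by the recurrences $f(T) = \max\bigl(\sum f(T_c),\ 1 + \sum g(T_c)\bigr)$ and $g(T) = \max(1, \sum g(T_c))$. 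Verifying that these recurrences yield exactly sinks plus near-sinks is the routine final check: a near-sink root gains the $+1$ from including itself, and in other cases summing suffices.
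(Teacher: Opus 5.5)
Your proof is correct, but the part that actually proves the upper bound is the closing recurrence, and it takes a genuinely different route from the paper.

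The paper uses an exchange argument. Given a maximum general position set $S$ and some $y\in S$ outside $\sink(T^{\rightarrow})\cup\nsink(T^{\rightarrow})$, it takes a longest directed path $y,y_1,\dots,y_t$. There $y_t$ is a sink and $y_{t-1}$ is a near-sink, and they cannot both lie in $S$. The paper swaps $y$ for whichever one is missing. This is safe because every root-to-sink path through the new vertex already passed through $y$. Iterating pushes $S$ inside $\sink(T^{\rightarrow})\cup\nsink(T^{\rightarrow})$.

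You instead make the structure explicit: in an out-arborescence, general position means no $3$-chain in the ancestor order. You then compute the optimum exactly through $f(T)=\max\bigl(\sum f(T_c),\,1+\sum g(T_c)\bigr)$ and $g(T)=\max\bigl(1,\sum g(T_c)\bigr)$. Both recurrences are justified because vertices in different subtrees are incomparable. The exchange argument is shorter and needs no auxiliary parameter. Your recursion is more mechanical, but every step is checkable, it gives the antichain number of each subtree along the way, and it is directly a linear-time algorithm.

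To make your version airtight, spell out the step ``in other cases summing suffices''. If the root is neither a sink nor a near-sink, some child $c$ is not a sink. Then $T_c$ contains a near-sink, namely the parent of a deepest vertex of $T_c$. By induction $f(T_c)\ge g(T_c)+1$, hence $\sum f(T_c)\ge 1+\sum g(T_c)$. You also use that the sinks and near-sinks of $T_c$ are exactly those of $T$ lying in $T_c$; this holds because out-neighbourhoods are unchanged.

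Also phrase everything in terms of sinks (vertices with no children) rather than leaves of the underlying tree. Here $g(T)$ is the number of sinks. For $x\rightarrow y$, the root $x$ is a leaf of the underlying tree yet a near-sink, and $g=1$ rather than $2$.

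The unfinished injection sketch in the middle is not needed and can be dropped.
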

	\begin{proof}
		Lemma~\ref{lem:sourcesandsinks} shows that $\sink (T^{\rightarrow }) \cup \nsink (T^{\rightarrow })$ is in general position. Let $S$ be any largest general position set in $T^{\rightarrow }$ and suppose that there is some $y \in S - (\sink (T^{\rightarrow }) \cup \nsink (T^{\rightarrow }))$. Let $y,y_1,\dots ,y_t$ be a longest path in $T^{\rightarrow }$ starting at $y$. Then $y_t$ is a sink and $y_{t-1}$ is a near-sink. We cannot have both $y_{t-1}$ and $y_t$ contained in $S$, for then the above path would contain three vertices of $S$. Hence we can replace $y$ by one of the vertices of $\{ y_{t-1},y_t\} -S$ without creating three-in-a-line. Therefore, in at most $|S|$ steps, $S$ can be transformed into $\sink (T^{\rightarrow }) \cup \nsink (T^{\rightarrow })$, and it follows that the latter is a largest general position set.
	\end{proof}
	The analogous result for in-arborescences, sources and near-sources follows upon taking the converse.  We see from Theorem~\ref{thm:arborescence} that the general position number of a tree $T^{\rightarrow }$ can be arbitrarily larger than $|\ext(T^{\rightarrow })|$. 
	
	Note that the lower bounds on the general position number of a tree $T^{\rightarrow}$ implied by Lemmas~\ref{lem:ext} and~\ref{lem:sourcesandsinks} are not tight in all cases. Figure~\ref{fig:tree7gp5} shows an example of a tree and its converse, where Lemmas~\ref{lem:ext} and~\ref{lem:sourcesandsinks} imply a lower bound of 4 on the gp number, but the actual value is 5. These are the smallest trees with this property.
	
	\begin{figure}
		\centering
		\input{tree7gp5}
		\caption{A tree and its converse with gp number 5}
		\label{fig:tree7gp5}
	\end{figure}
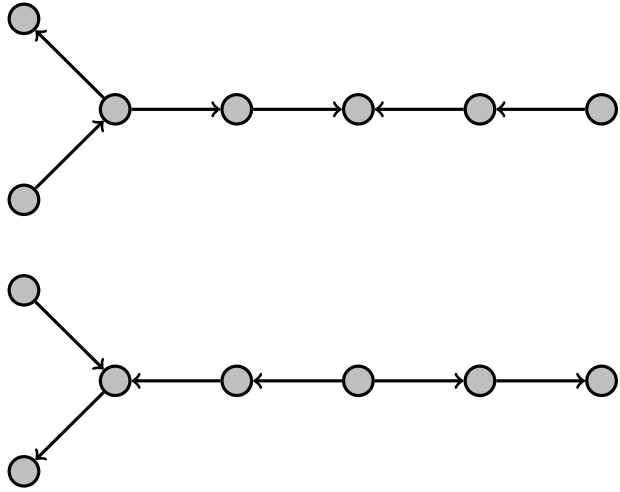

	\section{Orientations of graphs}\label{sec:orientation}

	For any graph $G$ of order $n\geq 2$, there are different orientations of $G$ that have different general position numbers. This leads to the introduction of the following definitions.
	
	\begin{definition}
		For a graph $G$ of order $n \geq 2$, the \emph{upper oriented general position number} $\ugp(G)$ of $G$ is the maximum general position number of an orientation of $G$. Similarly, the \emph{lower oriented general position number} $\lgp(G)$ of $G$ is defined as the minimum general position number of an orientation of $G$. 
	\end{definition}
	
	\begin{definition}
		The \emph{general position spectrum} of a graph $G$ is defined as the set $$ S_{\gp}(G) = \{  \gp(D) : D \text{ is an orientation of } G \}.$$
	\end{definition}
	
	Thus $\lgp(G) = \min  S_{\gp}(G)$ and $ \ugp(G) = \max  S_{\gp}(G)$. Trivially for any graph $G$ of order $n \geq 2$ we have $2 \leq \lgp(G) \leq \ugp(G) \leq n$. 
	
	\subsection{Bounding the oriented general position numbers}
	
	Our main conjecture is that for any graph one can find orientations with different general position numbers, or equivalently the general position spectrum always has cardinality at least 2. A computer check shows that the conjecture is true for graphs with order at most 6.
	
	\begin{conjecture}\label{conj:orientation}
		For any connected graph $G$ with order $n \geq 3$, $\ugp (G) > \lgp (G)$.  
	\end{conjecture}
	
	The upper oriented general position number can be characterised in terms of induced comparability subgraphs. Note that the comparability subgraphs in question may be disconnected.
	
	\begin{theorem}\label{thm:comparability subgraphs}
		The upper oriented general position number of $G$ is equal to the largest order of an induced subgraph $H$ of $G$ that is also a comparability graph. 
	\end{theorem}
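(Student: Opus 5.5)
The plan is to prove the two inequalities separately. The upper bound follows from Proposition~\ref{prop:gpsetsaretransitive}. The lower bound comes from an explicit orientation in which every edge leaving the chosen vertex set is directed away from it.

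For $\ugp(G) \leq \max\{|V(H)|\}$, take an orientation $D$ of $G$ with $\gp(D) = \ugp(G)$ and let $S$ be a $\gp$-set of $D$. By Proposition~\ref{prop:gpsetsaretransitive}, the induced subdigraph $\langle S\rangle$ is transitive. Since $D$ is an orientation of $G$, the digraph $\langle S\rangle$ is an orientation of the induced subgraph $G[S]$: every edge of $G[S]$ receives exactly one direction and no other arcs appear. So $G[S]$ is an induced subgraph of $G$ admitting a transitive orientation, i.e.\ a comparability graph (possibly disconnected). This gives $\ugp(G) = |S| \leq \max\{|V(H)|\}$.

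For the reverse inequality, let $H = G[S]$ be a largest induced subgraph of $G$ that is a comparability graph, and fix a transitive orientation of $H$. I extend it to an orientation $D$ of $G$ as follows. Every edge joining a vertex of $S$ to a vertex of $V(G)-S$ is directed from $S$ outwards. Edges inside $V(G)-S$ are oriented arbitrarily. I then claim that $S$ is in general position in $D$.

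The key observation is that no directed path can leave $S$ and later return to it, since there are no arcs from $V(G)-S$ into $S$. Hence any directed $u,v$-path with $u,v \in S$ lies entirely inside $\langle S\rangle$. This digraph is the transitive orientation of $H$, so by transitivity the existence of any such path $u \rightarrow \cdots \rightarrow v$ forces the arc $u \rightarrow v$. Therefore $d(u,v) \in \{1,\infty\}$ for all distinct $u,v\in S$, and every shortest $u,v$-path with finite length is the single arc $u\rightarrow v$, which has no internal vertices. Thus no three vertices of $S$ lie on a common geodesic, so $\gp(D) \geq |S|$ and hence $\ugp(G) \geq |V(H)|$.

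There is no serious obstacle. The one point that needs care is checking that a shortest path between two vertices of $S$ cannot use vertices outside $S$. The "out-directed boundary" orientation handles this, because such paths cannot return to $S$ at all.
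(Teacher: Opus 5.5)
Your proposal is correct and follows the paper's proof. The upper bound applies Proposition~\ref{prop:gpsetsaretransitive} to a $\gp$-set of an optimal orientation, and the lower bound transitively orients $H$ and directs every edge between $V(H)$ and the rest of $G$ out of $V(H)$. Your observation that no directed path can re-enter $S$, which forces $d(u,v)\in\{1,\infty\}$ for $u,v\in S$, is precisely the step the paper leaves as ``easily seen''.
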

	\begin{proof}
		Let $G$ be a graph and $D$ be an orientation of $G$ with largest possible general position number, with $S$ a largest general position set. By Proposition~\ref{prop:gpsetsaretransitive} the subdigraph $\langle S\rangle$ is transitive, and it follows that the subgraph $H$ induced by the vertices corresponding to $S$ is a comparability graph. 
		
		Conversely, let $H$ be an induced comparability subgraph of $G$. Orient the edges of $H$ to give a transitive orientation of $V(H)$. Then orient any edge between $V(H)$ and $V(G-H)$ in the direction from $V(H)$ to $V(G-H)$, and orient all remaining edges at random. It is easily seen that $V(H)$ is a general position set in this orientation.
	\end{proof}
	
	Theorem~\ref{thm:comparability subgraphs} also shows that the difference $n-\ugp (G)$ can be arbitrarily large; as a comparability graph does not contain induced odd cycles of length 5 or more, any graph containing $r$ disjoint induced odd cycles of length $\geq 5$ has $\ugp (G) \leq n-r$. 
	
	\begin{corollary}\label{cor:comparability}
		A graph $G$ satisfies $\ugp (G) = n$ if and only if $G$ is a comparability graph.
	\end{corollary}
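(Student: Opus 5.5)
This is a direct specialisation of Theorem~\ref{thm:comparability subgraphs}, combined with Corollary~\ref{cor2.5}; I would argue the two directions separately.

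For the forward direction, suppose $\ugp (G) = n$. Then some orientation $D$ of $G$ has $\gp (D) = n$, so by Corollary~\ref{cor2.5} the orientation $D$ is transitive. A transitive orientation of $G$ is exactly what it means for $G$ to be a comparability graph. Equivalently, Theorem~\ref{thm:comparability subgraphs} says $G$ has an induced comparability subgraph of order $n$, and the only such subgraph is $G$ itself.

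For the converse, suppose $G$ is a comparability graph and fix a transitive orientation $D$ of $G$. By Corollary~\ref{cor2.5}, $\gp (D) = n$, so $\ugp (G) \geq n$. The trivial bound $\ugp (G) \leq n$ gives equality. Alternatively, take $H = G$ in Theorem~\ref{thm:comparability subgraphs}; no edges leave $H$, so the orientation built in that proof is just the transitive one.

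There is no real obstacle here. The only point to check is that Corollary~\ref{cor2.5} needs $n \geq 2$, which is already part of the standing assumption in the definition of $\ugp$.
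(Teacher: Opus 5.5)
Your proposal is correct and matches the paper, which presents this corollary without separate proof as the case $H = G$ of Theorem~\ref{thm:comparability subgraphs}. Your main route through Corollary~\ref{cor2.5} is the same argument one level down: that theorem's proof relies on Proposition~\ref{prop:gpsetsaretransitive} and on the fact that a transitive orientation has no geodesic of length greater than $1$.
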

	
	It follows that $\ugp (G) = n$ can be tested in polynomial time~\cite{McConnell}. It appears that the order of a largest induced comparability subgraph has not been studied before. However, both cliques and bipartite graphs are comparability graphs, and largest independent unions of cliques~\cite{unionofcliques} and largest induced bipartite subgraphs~\cite{bipartitesubgraphs} have both been been studied. An important subcase of the largest induced bipartite subgraph problem is finding the largest induced forest in a graph, see~\cite{Alon}.  For any graph $G$, we denote the number of vertices in a largest induced union of cliques of $G$ (possible containing just one clique) by $\alpha ^{\omega }(G)$, the number of vertices in a largest induced bipartite subgraph of $G$ by $\alpha _2(G)$ and the number of vertices in a largest induced forest subgraph of $G$ by $\ac(G)$. Trivially we have $\alpha _2(G) \geq \ac (G)$.

	\begin{corollary}\label{cor:induced bipartite bound}
		For any graph $G$, $\ugp (G) \geq \max \{ \alpha ^{\omega }(G),\alpha _2(G)\} $.
	\end{corollary}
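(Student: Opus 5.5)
This follows directly from Theorem~\ref{thm:comparability subgraphs}. That result identifies $\ugp(G)$ with the largest order of an induced subgraph of $G$ that is a comparability graph. So it is enough to check that the subgraphs realising $\alpha^{\omega}(G)$ and $\alpha_2(G)$ are both induced comparability subgraphs of $G$; the inequality then follows by taking the larger of the two.

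For $\alpha^{\omega}(G)$, let $H$ be a largest induced subgraph of $G$ that is a disjoint union of cliques $Q_1,\ldots,Q_t$ with no edges between them. Fix a linear order on each $Q_i$ and orient every edge from the smaller to the larger vertex. This gives a transitive tournament on each $Q_i$, which is transitive. Since there are no edges between different cliques, a directed path $x\rightarrow y\rightarrow z$ lies inside one clique, and there $x\rightarrow z$ holds. Hence $H$ is a comparability graph.

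For $\alpha_2(G)$, let $H$ be a largest induced bipartite subgraph with bipartition $X\cup Y$, and orient every edge from $X$ to $Y$. No vertex then has both an in-arc and an out-arc, so there is no directed path of length $2$, and transitivity holds vacuously. Again $H$ is a comparability graph.

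Combining the two cases, Theorem~\ref{thm:comparability subgraphs} gives $\ugp(G)\geq \alpha^{\omega}(G)$ and $\ugp(G)\geq \alpha_2(G)$, which is the claim. Alternatively one could avoid Theorem~\ref{thm:comparability subgraphs} and build the orientation directly: orient $H$ transitively as above and orient all edges between $V(H)$ and $V(G)-V(H)$ away from $H$, as in the proof of that theorem. There is no real obstacle. The only point needing care is that disconnected comparability graphs are allowed, as the paper notes before Theorem~\ref{thm:comparability subgraphs}, and this is what covers unions of several cliques.
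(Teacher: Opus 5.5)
Your proof is correct and follows the paper's route. The paper reads the corollary off Theorem~\ref{thm:comparability subgraphs} using the remark that cliques (and hence independent unions of cliques) and bipartite graphs are comparability graphs; you have simply written out the transitive orientations that justify that remark.
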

	
	For any $n \geq 2$ there is a graph with order $n$ and general position number 2, namely the path $P_n$. By contrast, using Ramsey's Theorem and Corollary~\ref{cor:induced bipartite bound} we see that the smallest value of the general position number taken over all digraphs with order $n$ tends to infinity with $n$. Since any general position set of a graph is an independent union of cliques~\cite{AnaChaChaKlaTho}, it also follows from Corollary~\ref{cor:induced bipartite bound} that $\ugp (G) \geq \gp (G)$ for any graph. Theorem~\ref{thm:comparability subgraphs} also implies a lower bound in terms of the diameter.
	
	\begin{corollary}\label{cor:diam bound}
		For any graph $G$, $\ugp (G) \geq \diam (G)+1$, with equality if and only if $G$ is a path. For any graph with order $n \geq 4$, it holds that $\ugp(G) \geq 4$
	\end{corollary}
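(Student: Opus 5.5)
Everything will go through Theorem~\ref{thm:comparability subgraphs}: $\ugp (G)$ equals the largest order of an induced comparability subgraph of $G$. So for each claim it suffices to exhibit a suitable induced comparability subgraph, or to argue that none larger exists.

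\emph{Lower bound.} Let $P$ be a shortest path in $G$ between two vertices at distance $\diam (G)$. A geodesic is an induced path, so $P$ is an induced subgraph with $\diam (G)+1$ vertices. Paths are bipartite and hence comparability graphs, which gives $\ugp (G) \geq \diam (G)+1$.

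\emph{Equality.} If $G = P_n$, then $G$ is itself a comparability graph, so Corollary~\ref{cor:comparability} gives $\ugp (G) = n = \diam (G)+1$. Conversely, suppose $G$ is connected and not a path. I will extend the diametral path $P = u_0,\ldots ,u_D$ by one vertex while keeping the induced subgraph a comparability graph.
\begin{itemize}
\item If $V(G) = V(P)$, then $G$ has an extra chord. But then $d(u_0,u_D) < D$, a contradiction, so this case cannot occur.
\item Otherwise, take $w \notin V(P)$ adjacent to some vertex of $P$. Since $P$ is a geodesic, the neighbours of $w$ on $P$ lie among at most three consecutive vertices $u_{i-1},u_i,u_{i+1}$.
\item If $w$ is adjacent to $u_{i-1}$ and $u_{i+1}$ but not $u_i$, then $P+w$ contains an induced $C_4$ with pendant paths. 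This is bipartite, hence comparability.
\item In every other case, $P+w$ is a path with a pendant vertex or a triangle attached. Such a graph is either bipartite (a tree) or a chordal graph with no induced odd hole. It is easily transitively oriented: orient the triangle transitively and alternate directions elsewhere.
\end{itemize}
In each case we get an induced comparability subgraph of order $\diam (G)+2$, so equality fails.

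The main care needed is exactly this case check: I must verify that every possible configuration of $P+w$ admits a transitive orientation. Bipartite graphs are automatic. A triangle attached to a path is handled by the orientation $u_{i-1}\to w \to u_i$, $u_{i-1}\to u_i$, with the remaining path edges alternated consistently. Alternatively, one can simply note that $P+w$ has no induced odd cycle of length at least $5$ and no induced complement of such a cycle. The cleanest approach is to exhibit an explicit orientation.

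\emph{Order $n \geq 4$.} Take $G$ connected. If $\diam (G) \geq 3$, the lower bound already gives $\ugp (G) \geq 4$. Otherwise $\diam (G) \leq 2$, and I need an induced comparability subgraph on $4$ vertices. By Corollary~\ref{cor:comparability}, it suffices to show that every graph on $4$ vertices is a comparability graph, since then any $4$-vertex induced subgraph works. I will check this by listing the $4$-vertex graphs. None of them contains an induced odd cycle of length at least $5$, and the only potential obstructions to transitive orientability on so few vertices do not occur. For example, $K_4$, $C_4$, the paw, the diamond and $K_{1,3}$ are all transitively orientable. If $G$ may be disconnected, the same argument applies: pick any $4$ vertices.
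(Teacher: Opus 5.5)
\paragraph{The equality case has a gap.} For the first two claims you follow the paper's route: apply Theorem~\ref{thm:comparability subgraphs} to a diametral geodesic $P=u_0,\ldots,u_D$ and, if $G$ is not a path, add a vertex $w\notin V(P)$ with a neighbour on $P$. The problem is the final case check. You correctly observe that $N(w)\cap V(P)\subseteq\{u_{i-1},u_i,u_{i+1}\}$. You then treat only three configurations and assert that otherwise $P+w$ is ``a path with a pendant vertex or a triangle attached''.

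This misses the case $N(w)\cap V(P)=\{u_{i-1},u_i,u_{i+1}\}$. There $\{u_{i-1},u_i,u_{i+1},w\}$ induces a diamond: two triangles sharing the edge $u_iw$, with $u_{i-1}\not\sim u_{i+1}$ because $P$ is a geodesic. You never exhibit or check an orientation for this graph.

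Your fallback justifications cannot cover this case. Being chordal without odd holes, or having no induced odd cycle of length at least $5$ and no complement of one, does \emph{not} imply being a comparability graph. The net (a triangle with one pendant vertex at each corner) satisfies both conditions. Yet in any transitive orientation of its triangle, the middle vertex $b$ (with $a\to b\to c$) cannot take its pendant edge in either direction.

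So an explicit orientation is required, and one exists. Take $u_{i-1}\to u_i$, $u_{i+1}\to u_i$, $u_{i-1}\to w$, $u_{i+1}\to w$, $u_i\to w$. Orient the remaining edges of $P$ alternately so that every vertex of $P$ other than $u_i$ is a source or a sink. Then $u_i$ is the only vertex with both in- and out-arcs, and the directed paths $u_{i\pm1}\to u_i\to w$ are closed by $u_{i\pm1}\to w$. With this fourth case added, your equality argument is complete. The paper's proof lists exactly these four configurations, indexed from the first vertex of $P$ that has an off-path neighbour.

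\paragraph{The bound for $n\geq 4$ is correct by a different route.} The paper deduces $\ugp(G)\geq 4$ from the equality analysis ($\ugp(G)\geq\diam(G)+2\geq 4$ for non-paths of diameter at least $2$) together with the cases of complete graphs and paths; this is phrased for connected $G$. You instead note that every $4$-vertex graph is a comparability graph. The triangle-free ones are bipartite, and $K_3\cup K_1$, the paw, the diamond and $K_4$ are transitively orientable. So by Theorem~\ref{thm:comparability subgraphs}, rather than Corollary~\ref{cor:comparability}, any four vertices of $G$ suffice. This argument is cleaner: it does not depend on the equality case, it covers disconnected graphs, and it makes your split on $\diam(G)$ unnecessary.
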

	\begin{proof}
		The lower bound $\ugp (G) \geq \diam (G)+1$ follows instantly from Theorem~\ref{thm:comparability subgraphs} applied to a longest geodesic, as does the equality case when $G$ is a path. Suppose that $\ugp (G) = \diam (G)+1$ and $G$ is not a path. Let $P = u_0,u_1,\ldots ,u_{\diam (G)}$ be a longest geodesic in $G$. As $G$ is not a path, there are vertices of $G$ not on $P$. However, any vertex of $G-P$ must be adjacent to a vertex of $P$; otherwise, if $u \in V(G-P)$ has no edge to $P$, then $\{ u\} \cup V(P)$ would be a comparability graph, a contradiction by Theorem~\ref{thm:comparability subgraphs}. Let $j$ be the smallest value of the index $i$ such that $u_i$ has some neighbour $v$ not on $P$. Then the only other neighbours that $v$ can have on $P$ are $u_{j+1}$ and $u_{j+2}$, and it is easily verified that each of the four corresponding induced subgraphs of $G$ are comparability graphs, which would again contradict Theorem~\ref{thm:comparability subgraphs}. 
		
		By Theorem~\ref{thm:comparability subgraphs} complete graphs and paths with order $n \geq 4$ have upper oriented general position number $n$, and $\ugp (G) \geq 4$ follows for any other graph with diameter at least 2 by the preceding argument.
	\end{proof}
	
	The following bounds on $\ac (G)$ are given in~\cite{Alon}, all of which give lower bounds for $\ugp (G)$. We will use one of these bounds later to prove Conjecture~\ref{conj:orientation} for triangle-free subcubic graphs.
	
	\begin{theorem}{\rm \cite{Alon}}\label{thm:Alon}
		\text{ }
		\begin{itemize}
			\item If $G$ is a triangle-free graph with maximum degree 3, then $\ac (G) \geq \frac{5n}{8}$.
			\item If $G$ is triangle-free and has size $m$, then $\ac (G) \geq n-\frac{m}{4}$.
			\item If $G$ is a connected graph with maximum degree $\Delta \geq 3$, then $\ac (G) \geq \alpha (G) +\frac{n-\alpha (G)}{(\Delta -1)^2}$.
			
		\end{itemize}
	\end{theorem}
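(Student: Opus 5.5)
The statement is quoted from~\cite{Alon}, so in the paper it can simply be cited. If I were to reprove it, I would derive the first bullet from the second and prove the second and third separately. For the first bullet: a triangle-free graph with maximum degree $3$ has $m \leq \frac{3n}{2}$, so the second bullet gives $\ac (G) \geq n-\frac{3n}{8} = \frac{5n}{8}$. The real work is therefore in the second and third bullets.

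For the second bullet I would argue by induction on $n$, proving that every triangle-free graph (not necessarily connected) has an induced forest on at least $n-\frac{m}{4}$ vertices. The reductions are as follows.
\begin{itemize}
\item If some vertex $v$ has degree at most $1$, take a forest for $G-v$ and add $v$; it cannot close a cycle, and the bound is preserved.
\item If some vertex $v$ has degree $d \geq 4$, delete it. This lowers $n$ by $1$ and $m$ by $d \geq 4$, so $n-\frac{m}{4}$ does not decrease.
\item If some vertex $v$ has degree $2$ with neighbours $x,y$, replace the path $x,v,y$ by a single edge $xy$ (or just delete $v$ if $x \sim y$ is already impossible by triangle-freeness). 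A forest in the new graph lifts to a forest in $G$ by reinserting $v$ when both $x,y$ are kept, or simply adding $v$ otherwise. The count changes by $(-1,-1)$ in $(n,m)$, which is favourable.
\end{itemize}
The catch in the degree-$2$ step is that the new edge may create a triangle, i.e.\ $x,y$ may have a common neighbour. That configuration is a $4$-cycle through $v$, which has to be handled separately, for example by deleting a suitable vertex of the $4$-cycle and checking the count.

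What is left after these reductions is the cubic triangle-free case. There deleting one vertex only gains $\frac{3}{4}-1<0$, so single deletions do not suffice. I would try deleting a vertex $v$ together with an adjacent configuration, such as $v$ plus two vertices at distance $2$. The aim is a step in which $k$ vertices leave the forest while at least $4k$ edges are removed, with some vertices forced back in as leaves. I expect this cubic case to be the main obstacle, and it is where the case analysis of~\cite{Alon} is concentrated.

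For the third bullet, start with a maximum independent set $I$. Every vertex of $V(G)-I$ has a neighbour in $I$ by maximality, and I want to add a set $J \subseteq V(G)-I$ so that $\langle I \cup J\rangle$ is still a forest. A sufficient condition is that $J$ is independent and no two vertices of $J$ share a neighbour in $I$. Then every component of $\langle I\cup J\rangle$ is a star centred at a vertex of $J$, or an isolated vertex of $I$. Build an auxiliary conflict graph on $V(G)-I$ joining two vertices that are adjacent or have a common $I$-neighbour, and choose $J$ greedily as an independent set of this conflict graph.

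The crude degree bound in the conflict graph is $\Delta (\Delta -1)$, which only yields $\frac{n-\alpha (G)}{\Delta (\Delta -1)+1}$. To reach $(\Delta-1)^2$ I would refine the argument in two ways. First, use connectivity and the maximality of $I$ to save one neighbour per vertex. Second, relax the star condition, allowing a vertex of $J$ to share one $I$-neighbour with another vertex of $J$ as long as no cycle is created, so that only genuine cycle-creating conflicts are counted. Then bound the conflict degree by $(\Delta-1)^2-1$ and apply the greedy bound. Getting exactly $(\Delta -1)^2$ is the delicate part of this bullet.
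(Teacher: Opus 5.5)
The paper gives no proof of this statement. It is imported from~\cite{Alon} and used as a black box, so your first sentence is exactly the paper's treatment, and your derivation of the first bullet from the second via $m\le \frac{3n}{2}$ is correct. Read as a self-contained re-proof, however, your sketch stops where the content of the second and third bullets lies.

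For the second bullet, your reductions for degree $\le 1$, degree $\ge 4$ and degree $2$ (no $4$-cycle) are sound. The $4$-cycle exception and the cubic case are left open, and both close with one piece of bookkeeping. Delete $X\cup Y$, apply induction, and put the vertices of $Y$ back as pendant vertices or pendant paths; this preserves $n-\frac m4$ whenever at least $4|X|$ edges meet $X\cup Y$. Suppose a degree-$2$ vertex $v$ lies on a $4$-cycle $v,x,w,y$, with all degrees in $\{2,3\}$ at that stage. Delete whichever of $x,y$ has degree $3$ and re-add $v$. If $d(x)=d(y)=2$, delete $x$ and re-add $y,v$ as a path hanging from $w$. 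In the cubic case, a $4$-cycle $u,v_1,w,v_2$ gives $X=\{v_1,v_2\}$, $Y=\{u,w\}$ with exactly $8$ edges. If the girth is at least $5$, delete any $v$ and suppress a neighbour of $v$, which now has degree $2$. This cannot create a triangle because there are no $4$-cycles, so $(n,m)\mapsto(n-2,m-4)$ with one vertex regained.

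For the third bullet, your proposed refinements do not work. The star condition is itself too restrictive. In the Heawood graph with $I$ a colour class (a maximum independent set of a connected graph), any two vertices outside $I$ share an $I$-neighbour. So the star condition forces $|J|\le 1$ and gives only $8<7+\frac74$; neither maximality of $I$ nor connectivity helps. Once $J$-vertices may share $I$-neighbours, cycles can pass through three or more $J$-vertices. So ``cycle-creating conflict'' is not a relation on pairs, and there is no fixed conflict graph whose degree you could bound.

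The missing idea is to drop the greedy. Take $F\supseteq I$ inclusion-maximal with $G[F]$ a forest, and set $J=F-I$, $R=V(G)-F$. Each $r\in R$ has two neighbours in a single component $C$ of $G[F]$. This $C$ meets $J$, since components inside $I$ are single vertices. As $I$ is independent, every edge of the tree $C$ meets $J$, so $|C|\le \Delta|C\cap J|+1$. At most $(\Delta-2)|C|+2$ edges leave $C$, and all of them go into $R$. Hence at most $\frac12(\Delta(\Delta-2)|C\cap J|+\Delta)\le\Delta(\Delta-2)|C\cap J|$ vertices of $R$ are charged to $C$, using $\Delta\ge 3$. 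Summing gives $|R|\le((\Delta-1)^2-1)|J|$, so $n-\alpha(G)\le(\Delta-1)^2|J|$ and $\ac(G)\ge\alpha(G)+|J|$ gives the bound. The factor $\frac12$, coming from each rejected vertex sending two edges into one tree, is the saving your plan was looking for.
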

	
	Now we consider an upper bound for the lower oriented general position number. The \emph{path cover number} $\rho (G)$ of $G$ is the smallest number of vertex-disjoint paths in $G$ that cover every vertex of $G$. 
	
	\begin{lemma}\label{lem:path cover bound}
		For any graph $G$, if $V(G)$ can be partitioned into $r$ paths of length at least 1 and $s$ copies of $K_1$, then $\lgp (G) \leq 2r+s$. In particular, $\lgp (G) \leq 2\rho (G)$.
	\end{lemma}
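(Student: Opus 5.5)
We construct one orientation of $G$ and check that its general position sets are small. Fix a partition of $V(G)$ into paths $P^1,\dots ,P^r$, each with at least two vertices, and singletons $w_1,\dots ,w_s$. Orient each path $P^j = u^j_0,u^j_1,\dots ,u^j_{\ell_j}$ as a directed path $u^j_0 \rightarrow u^j_1 \rightarrow \dots \rightarrow u^j_{\ell_j}$. The remaining edges of $G$ must be oriented so that each $P^j$ becomes an isometric subdigraph, i.e.\ so that no edge provides a shortcut along $P^j$. The natural choice is to orient every edge between different parts of the partition "backwards" with respect to a global linear order. Concretely, list the vertices so that each path appears as a consecutive block in its path order, and the singletons follow in some order. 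Orient every non-path edge from the later vertex to the earlier one. An edge $u^j_a u^j_b$ inside a path with $a<b$ (a chord) is therefore oriented $u^j_b \rightarrow u^j_a$.

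The key claim is that the only forward arcs in the whole digraph are the path arcs $u^j_i \rightarrow u^j_{i+1}$. Now take a directed walk from $u^j_a$ to $u^j_b$ with $a<b$. Its vertices' positions in the linear order must increase by a net amount equal to $b-a$ within the block. Every backward arc decreases the position, and each forward arc advances the position by exactly one. Hence the walk uses at least $b-a$ forward arcs. So $d(u^j_a,u^j_b)=b-a$, and the subpath of $P^j$ is a shortest path. Thus each directed path $P^j$, and trivially each singleton, is an isometric subdigraph of the orientation $D$.

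These subdigraphs form an isometric cover of $D$, so Theorem~\ref{thm:isometric cover} gives $\gp(D) \leq \sum_j \gp(P^j) + s$. A directed path is itself a geodesic, so its general position number is $2$ (using Proposition~\ref{prop1} for the lower bound). Therefore $\gp(D)\leq 2r+s$ and hence $\lgp(G)\leq 2r+s$. The "in particular" statement follows by taking a minimum path cover: paths of length $0$ are singletons and contribute $1\le 2$ each, so $\lgp(G)\leq 2\rho(G)$.

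The main obstacle is making the isometry argument for each $P^j$ watertight, namely that no detour through other blocks or through chords shortens the distance along $P^j$. The counting of forward arcs against net position change is the step to verify carefully. It holds because the position function increases by at most one per arc and only along path arcs.
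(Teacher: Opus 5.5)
Your construction is the paper's up to relabelling. Paths are oriented forward, chords backward, and every edge between different parts follows a fixed order on the parts: you send such edges from later to earlier parts, the paper from earlier to later. Both then apply Theorem~\ref{thm:isometric cover}. Your position-counting argument is a welcome explicit proof of something the paper only asserts, namely that each $P^j$ is a geodesic of $D$.

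One step is misstated, however. On its own, the directed path $P^j$ is not an isometric subdigraph in the paper's sense, because distances must agree for every ordered pair and this fails once $P^j$ has a chord. A chord $u^j_b \rightarrow u^j_a$ with $a+2\le b$ gives $d_D(u^j_b,u^j_a)=1$, whereas $d_{P^j}(u^j_b,u^j_a)=\infty$. Your counting only controls distances in the forward direction.

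There are two easy repairs.
\begin{enumerate}
\item Take the induced subdigraph $\langle V(P^j)\rangle$, as the paper does. Then observe that in your orientation every forward arc is a path arc, so no arc leads from an earlier block to a later one. Hence a walk between two vertices of the same block never leaves that block. So $\langle V(P^j)\rangle$ is isometric, and since it contains the geodesic $P^j$, it has general position number at most $2$ by Lemma~\ref{lem:diambound}.
\item Bypass Theorem~\ref{thm:isometric cover} entirely. Every subpath of $P^j$ is a shortest path of $D$, so any general position set of $D$ contains at most two vertices of each $P^j$ and at most one of each singleton. This gives $\gp(D)\le 2r+s$ directly.
\end{enumerate}
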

	\begin{proof}
		Let $\{ P^{(1)},P^{(2)},\ldots ,P^{(t)}\} $ be a collection of $t$ vertex-disjoint paths of $G$ that covers $V(G)$, where $r$ of the paths have length at least 1 and $s$ of them have length 0 (note, the superscripts only label the paths and do not indicate their lengths). For $1 \leq i \leq t$ we orient the edges of the induced subgraph $G[P^{(i)}]$ as follows. Suppose that $P^{(i)}$ has length $\ell (i) \geq 0$ and write the vertices of $V(P^{(i)})$ as $\{ v_0^i,\ldots ,v_{\ell (i)}^i\} $ so that the path $P^{(i)}$ is $v_0^i,\dots ,v_{\ell (i)}^i$. Orient the edges of $P^{(i)}$ so that $v_j^i \rightarrow v_{j+1}^i$ for $0 \leq j \leq \ell (i)-1$. In general $P^{(i)}$ will not be induced, and if there is a chord $v_{j_1}^iv_{j_2}^i$ in $G$ with $j_1 +2 \leq j_2$ we orient it as $v_{j_2}^i \rightarrow v_{j_1}^i$. 
		
		Finally, if $e$ is an edge of $G$ between $P^{(i)}$ and $P^{(j)}$, where $1 \leq i < j \leq t$, orient $e$ so that its initial vertex is in $P^{(i)}$ and its terminal vertex is in $P^{(j)}$. Then the subdigraph induced by $V(P^{(i)})$ is isometric for $1 \leq i \leq t$ and has general position number at most 2 by Lemma~\ref{lem:diambound} (and is just 1 if the path $P^{(i)}$ has length 0), and it follows from Theorem~\ref{thm:isometric cover} that this orientation of $G$ has general position number at most $2r+s$.
	\end{proof}
	
	\begin{corollary}\label{cor:true for long paths}
		Conjecture~\ref{conj:orientation} is true for graphs containing a path of length at least $n-2$.
	\end{corollary}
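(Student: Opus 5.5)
We have to show that if $G$ is connected, has order $n \geq 3$, and contains a path of length at least $n-2$, then $\lgp(G) < \ugp(G)$. The plan is to bound $\lgp(G)$ from above by Lemma~\ref{lem:path cover bound} and $\ugp(G)$ from below by Theorem~\ref{thm:comparability subgraphs} and Corollary~\ref{cor:diam bound}, and then check that the two bounds separate.

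\emph{Upper bound on $\lgp(G)$.} Let $P$ be a path of length at least $n-2$, so $P$ has at least $n-1$ vertices. If $P$ is a Hamilton path, the partition $\{P\}$ has $r=1$, $s=0$, and Lemma~\ref{lem:path cover bound} gives $\lgp(G) \leq 2$. Otherwise exactly one vertex $w$ lies off $P$. Taking $P$ together with $\{w\}$ as a copy of $K_1$ gives $r=s=1$, so $\lgp(G)\leq 3$. In this case we can often do better. If $w$ is adjacent to an endpoint of $P$, then $G$ is traceable and $\lgp(G)\le 2$. If $w$ is adjacent to $v_j$ and $v_{j+1}$ for some $j$, we insert $w$ between them and again $G$ is traceable. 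So in every case $\lgp(G)\leq 3$, and $\lgp(G)\le 2$ whenever $G$ is traceable.

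\emph{Lower bound on $\ugp(G)$.} For $n \geq 4$, Corollary~\ref{cor:diam bound} gives $\ugp(G)\geq 4 > 3 \geq \lgp(G)$, which settles this case. It remains to treat $n=3$. The connected graphs of order 3 are $P_3$ and $K_3$, and both are traceable, so $\lgp(G)=2$. Both are also comparability graphs, so Corollary~\ref{cor:comparability} gives $\ugp(G)=3>2$.

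\emph{Where care is needed.} The main thing to verify is that Lemma~\ref{lem:path cover bound} really applies to an arbitrary (possibly non-induced) spanning collection of paths. Its proof orients chords backwards, so each $V(P^{(i)})$ induces an isometric subdigraph, and this holds for any path. The other point to check is that the claim $\ugp(G)\geq 4$ in Corollary~\ref{cor:diam bound} covers every connected graph of order at least $4$. The corollary states exactly this, so no further casework is needed.
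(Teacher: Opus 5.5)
Your proof is correct and follows the paper's argument: Lemma~\ref{lem:path cover bound} gives $\lgp(G)\le 3$ (and $\lgp(G)=2$ when $G$ is traceable), Corollary~\ref{cor:diam bound} gives $\ugp(G)\ge 4$ for $n\ge 4$, and you check $n=3$ directly, where the paper only says this case is easily seen. Your refinements about inserting the off-path vertex $w$ are harmless but unnecessary, since $\lgp(G)\le 3$ already suffices.
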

	\begin{proof}
		The conjecture is easily seen to be true for $n = 3$, so assume that $n \geq 4$. If $G$ is traceable, then by Lemma~\ref{lem:path cover bound}, $\lgp (G) = 2$, whilst if $G$ has a path of length $n-2$, then Lemma~\ref{lem:path cover bound} gives $\lgp (G) \leq 3$. On the other hand, by Corollary~\ref{cor:diam bound} we have $\ugp (G) \geq 4$. 
	\end{proof}
	
	\begin{corollary}
		Any graph with order $n \geq 3$ and size $m \geq \binom{n-3}{2}+3$ satisfies Conjecture~\ref{conj:orientation}.
	\end{corollary}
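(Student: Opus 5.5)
The statement will follow from Corollary~\ref{cor:true for long paths}: it suffices to show that every graph with order $n \geq 3$ and size $m \geq \binom{n-3}{2}+3$ contains a path of length at least $n-2$, that is, a path on at least $n-1$ vertices. The small cases ($n \leq 6$, say) are covered by the computer check mentioned after Conjecture~\ref{conj:orientation}, or can be dealt with by hand. So assume $n$ is moderately large. If $G$ is disconnected, the conjecture as stated concerns connected graphs, but the size bound already forces near-connectivity. A graph with a component missing $k \geq 1$ vertices has at most $\binom{n-k}{2}+\binom{k}{2}$ edges. For $n \geq 5$ this exceeds $\binom{n-3}{2}+3$ only when $k=1$, i.e.\ there is an isolated vertex and the rest is dense. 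We therefore restrict to connected $G$ as in the conjecture.

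The main tool will be a classical Erd\H{o}s--Gallai type or Ore type extremal result on long paths. I plan to use the Erd\H{o}s--Gallai theorem in its connected refinement due to Kopylov. A connected graph on $n$ vertices with no path on $k+1$ vertices has at most $\max\{\binom{k-1}{2}+(n-k+1),\ \binom{\lceil (k+1)/2\rceil}{2}+\lfloor (k-1)/2\rfloor (n-\lceil (k+1)/2\rceil)\}$ edges. Here $k=n-1$, since we want to exclude graphs with no path on $n-1$ vertices. The first term gives $\binom{n-2}{2}+2$, which is too large, so Kopylov alone is not enough. I would therefore instead use the elementary route via Hamiltonian-path degree conditions. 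I would argue that $m \geq \binom{n-3}{2}+3$ means the complement $\bar G$ has at most $\binom{n}{2}-\binom{n-3}{2}-3 = 3n-9$ edges. A graph whose complement has few edges has many vertices of large degree.

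Concretely, I would remove a small set $X$ of low-degree vertices and show that the remainder has a Hamilton path by Dirac/Ore. Then I would reattach all but at most one vertex of $X$, which gives a path on at least $n-1$ vertices. Let $X$ be the set of vertices with degree less than $(n-1)/2$ in $G$. Each such vertex has complement-degree greater than $(n-1)/2$, so $|X|(n-1)/4 < 3n-9$ (counting complement edges with multiplicity at most $2$), giving $|X| \leq 11$. I expect this bookkeeping to be the main obstacle. With up to a dozen low-degree vertices the simple ``remove and reattach'' argument fails unless those vertices still have at least two neighbours each. A vertex of degree $0$ or $1$ can be the loss.

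What I would try first is a tighter count. Since $\bar G$ has at most $3n-9$ edges, at most one vertex can have $G$-degree $\leq 1$ once $n$ is large: two such vertices would use about $2n-5$ complement edges, and then the rest of $G$ is nearly complete. So I would take a longest path $P$ in $G$ and suppose it misses two vertices $x,y$. The standard rotation/closure argument (Pósa rotations, as in the proof of Ore's theorem) then shows that, for each endpoint arrangement, the non-neighbours of the path endpoints and of $x,y$ force at least roughly $3n-8$ non-edges. This contradicts $|E(\bar G)| \leq 3n-9$; it is exactly where the constant $3$ in the hypothesis should be sharp. Then Corollary~\ref{cor:true for long paths} finishes the proof.
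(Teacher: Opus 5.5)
Your reduction matches the paper's: show that $G$ has a path on at least $n-1$ vertices, then apply Corollary~\ref{cor:true for long paths}. The gap is that you never supply the extremal input, and you discarded the right tool because of an indexing slip. In your own convention (no path on $k+1$ vertices), forbidding a path on $n-1$ vertices means $k=n-2$, not $k=n-1$. The first term of Kopylov's bound is then $\binom{n-3}{2}+(n-(n-2)+1)=\binom{n-3}{2}+3$, which is exactly the hypothesis. This is the result the paper invokes: the Balister--Gy\H{o}ri--Lehel--Schelp version of Kopylov's theorem, which also identifies the extremal graphs $K_s\vee(K_{n-2s-2}\cup(s+2)K_1)$.

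The replacement argument you sketch cannot be completed, because the statement it aims at is false. Vertices of degree at most $1$ cost few non-edges. Two of them force only $2n-5\le 3n-9$ non-edges, and even three force only $3(n-2)-3=3n-9$, so your ``at most one'' claim fails. Indeed $K_1\vee(K_{n-4}\cup 3K_1)$ is connected, has exactly $\binom{n-3}{2}+3$ edges, and its longest path has only $n-2$ vertices. Hence no P\'osa-rotation count can force $3n-8$ non-edges. The equality case, which the hypothesis includes, must be handled separately, as the paper does by checking the extremal join graphs directly. They are comparability graphs: orient every edge from the $s+2$ independent vertices into $K_s$, and order the clique on $K_{n-2s-2}\cup K_s$ transitively with $K_{n-2s-2}$ first. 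Then Theorem~\ref{thm:conjecturetruecomparability} applies.

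Moreover, for $n\in\{7,\dots,11,13\}$ the second term of Kopylov's bound is strictly larger than $\binom{n-3}{2}+3$. For example, $K_2\vee\overline{K_5}$ is connected of order $7$ with $11>9$ edges but has no path on $6$ vertices. So for these orders the path lemma fails even with strict inequality, and they need a separate argument; the paper's one-line citation glosses over this range too. A minor point: your count for disconnected graphs is wrong, since $K_{n-2}\cup K_2$ also exceeds the bound for $n\ge 6$. This is harmless, because the conjecture concerns only connected graphs.
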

	\begin{proof}
		It is shown in~\cite{Balister} that any graph with order $n \geq 5$ and size $m > \binom{n-3}{2}+3$ has a path of length $n-1$ and the graphs with size $\binom{n-3}{2}+3$ that have no path of length $n-1$ have the form $K_s \vee (K_{n-2s-2} \cup (s+2)K_1)$ for $s = 1$ or $s = \left \lfloor \frac{n-3}{2} \right \rfloor $. Hence for size $m > \binom{n-3}{2}+3$ the result follows from Corollary~\ref{cor:true for long paths}, and the join graphs with size $\binom{n-3}{2}+3$ with no path of length $n-1$ are easily seen to satisfy the conjecture.
	\end{proof}
	
	In particular, Lemma~\ref{lem:path cover bound} implies that any traceable graph $G$ has $\lgp (G) = 2$. However, the converse does not hold. 
	
	\begin{theorem}
		There are infinitely many non-traceable graphs with lower oriented general position number 2.
	\end{theorem}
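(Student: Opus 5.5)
The plan is to give an explicit infinite family. Each graph in the family is a cycle in which every second vertex is ``doubled'' by a non-adjacent twin. Non-traceability will come from a standard toughness count. The orientation will make every vertex behave like a position on the directed cycle $Z_{2k}$, and we will use that $Z_n$ has general position number $2$.

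\emph{Construction.} For $k \geq 2$, let $G_k$ have vertices $u_0,\ldots,u_{2k-1}$ (indices mod $2k$) forming a cycle $C_{2k}$, together with vertices $w_0,\ldots,w_{k-1}$, where $w_i \sim u_{2i}$ and $w_i \sim u_{2i+2}$. Thus $w_i$ and $u_{2i+1}$ have the same neighbourhood $\{u_{2i},u_{2i+2}\}$, and $G_k$ has order $3k$.

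\emph{Non-traceability.} Delete the $k$ even vertices $u_{2i}$. The remaining $2k$ vertices $u_{2i+1},w_i$ are pairwise non-adjacent, so this leaves $2k$ isolated vertices. A graph with a Hamilton path loses at most $|X|+1$ components on deleting a set $X$. Since $2k > k+1$ for $k\ge 2$, $G_k$ is not traceable.

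\emph{Orientation.} Orient the cycle as $u_j \rightarrow u_{j+1}$, and orient $u_{2i}\rightarrow w_i \rightarrow u_{2i+2}$. Assign to each vertex a position in $\mathbb{Z}_{2k}$: $u_j$ has position $j$, and $w_i$ has position $2i+1$, the same as its twin $u_{2i+1}$. Every arc goes from position $p$ to position $p+1$, and at each position the twins have identical in- and out-neighbourhoods. Hence for vertices $x,y$ at positions $p\neq q$ we get $d(x,y)=(q-p) \bmod 2k$. Moreover, any sequence that picks, at each intermediate position, one vertex of that position is a shortest $x,y$-path. For non-adjacent twins $x,y$ at the same position $p$, the distance is $d(x,y)=2k$, realised by walking once around through every other position, again with a free choice of vertex at each position.

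\emph{General position number is $2$.} Take any three vertices; we must show one lies on a shortest path between the other two.
\begin{itemize}
\item If two of them, $x$ and $y$, are twins at position $p$ and the third $z$ is at position $q\neq p$, then the shortest $x,y$-path that chooses $z$ at position $q$ contains all three.
\item If all three positions are distinct, list them cyclically as $p,q,r$. Then $(r-p) \bmod 2k < 2k$, and the shortest path from the vertex at $p$ to the vertex at $r$ passes through position $q$, where we may choose the given vertex.
\end{itemize}
At most two vertices share a position, so these cases are exhaustive. Hence $\gp=2$ for this orientation, and by Proposition~\ref{prop1} $\lgp(G_k)=2$ for all $k\ge 2$.

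The main obstacle is verifying the distance formula rigorously, in particular that no shortcut exists. This follows because every arc increases position by exactly $1$ mod $2k$, so any $x,y$-path has length congruent to $q-p$ mod $2k$ and is positive. The twin case needs length exactly $2k$ rather than $0$, which holds since the twins are distinct and non-adjacent.
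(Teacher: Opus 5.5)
Your proof is correct, but it uses a different family from the paper's. The paper takes the theta graphs $\Theta_{r,r,s,s}$. It orients the two internal paths of length $r+1$ from $x$ to $y$ and the two of length $s+1$ from $y$ to $x$. It then rules out a general position triple by a case analysis on which internal paths the three vertices lie on. Non-traceability is left implicit there: deleting $x,y$ leaves four components.

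You instead take a cyclic chain of $k$ four-cycles and orient it so that every arc advances the position by one around $Z_{2k}$. Consecutive position classes are completely joined, with at most two vertices per class. This makes all distances explicit and turns the three-in-a-line check into a short cyclic-order argument. You also prove non-traceability explicitly, though ``loses at most $|X|+1$ components'' should read ``$G-X$ has at most $|X|+1$ components''.

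The paper's oriented theta graph also advances position by one along every arc, with at most two vertices per position. However, there the choice of route is made only at the hubs $x$ and $y$. That is why it needs the separate case of two vertices on distinct parallel paths of the same direction, handled by routing between them through the third vertex. In your graph each doubled class sits between two singleton classes, so any vertex at an intermediate position can be placed on a shortest path, and no such case split arises.

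For $k=2$ the two constructions coincide: your oriented $G_2=K_{2,4}$ is exactly the paper's oriented $\Theta_{1,1,1,1}$. For $k\ge 3$ your graphs have $k$ vertices of degree four, so they are not theta graphs. The paper's family has two free length parameters, whereas yours gives the cleaner verification.
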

	\begin{proof}
		For $1 \leq r_1 \leq r_2 \leq \dots \leq r_t$ and $t \geq 2$, let $\Theta _{r_1,r_2,\dots ,r_t}$ be the theta graph formed from paths $P_{r_1+2},P_{r_2+2}, \ldots ,P_{r_t+2}$ by identifying initial points of the paths to a single vertex $x$ and the terminal points of the paths to a single vertex $y$, so that the graph has order $2+\sum _{i=1}^tr_i$. Consider the oriented theta graph $\Theta _{r,r,s,s}$ obtained by orienting all edges of the two paths of length $r+1$ towards $y$ and all edges of the two paths of length $s+1$ towards $x$. Call each of the oriented paths of length $r+1$ or $s+1$ between $x$ and $y$ an \emph{internal path}. Suppose that this oriented theta graph contains three vertices $z_1,z_2,z_3$ in general position. No internal path can contain all of $z_1,z_2,z_3$. If two of $z_1,z_2,z_3$ lie on the same internal path, say $z_1$ precedes $z_2$ on an internal path, then all $z_1,z_3$-paths pass through $z_2$. Suppose finally that $z_1,z_2,z_3$ all lie on distinct internal paths; without loss of generality $z_1$ and $z_2$ lie on distinct internal paths from $x$ to $y$ and $z_3$ lies on an internal path from $y$ to $x$; observe that there is a shortest $z_1,z_2$-path passing through $z_3$, a contradiction. Therefore these oriented theta graphs have lower oriented general position number 2, but are not traceable. 
	\end{proof}

	Applying the argument of Lemma~\ref{lem:path cover bound} to a longest geodesic in $G$, then the following corollary follows instantly.
	
	\begin{corollary}\label{cor:lgp vs diam}
		For any graph $G$, $\lgp (G) \leq n(G)-\diam (G)+1$.
	\end{corollary}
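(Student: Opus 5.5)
We construct an explicit orientation of $G$ and bound its general position number using the isometric cover argument from the proof of Lemma~\ref{lem:path cover bound}. Let $d = \diam (G)$ and choose a longest geodesic $P = u_0,u_1,\ldots ,u_d$ of $G$ (if $G$ is disconnected we read $\diam (G)$ as the largest finite eccentricity, and the same argument applies inside one component). Since $P$ is a geodesic of $G$, it is an induced path, so there are no chords to orient.

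\textbf{The orientation.} We partition $V(G)$ into the path $P^{(1)} = P$, which has length $d \geq 1$ (if $d = 0$ the claim $\lgp (G) \leq n+1$ is trivial), together with the $n-d-1$ remaining vertices, each taken as a trivial path $K_1$. We then orient exactly as in the proof of Lemma~\ref{lem:path cover bound}:
\begin{itemize}
\item orient $u_j \rightarrow u_{j+1}$ along $P$;
\item put $P$ first in the ordering, so that every edge from $V(P)$ to $V(G)-V(P)$ is oriented away from $P$;
\item fix a linear order on the singletons and orient each edge between two of them from the earlier to the later.
\end{itemize}
The resulting orientation is acyclic. Nothing outside $P$ can reach back into $P$, so the subdigraph induced by $V(P)$ is isometric (indeed convex). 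Each singleton is trivially isometric.

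\textbf{The bound.} Theorem~\ref{thm:isometric cover} gives that the general position number of this orientation is at most $\gp (P) + (n-d-1)$. By Lemma~\ref{lem:diambound}, the directed path $P$ has $\gp (P) \leq (d+1)-d+1 = 2$. Hence the orientation has general position number at most $2 + (n-d-1) = n-d+1$, and so $\lgp (G) \leq n-d+1$.

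This is exactly the case $r = 1$, $s = n-d-1$ of Lemma~\ref{lem:path cover bound}, so the result could be quoted directly from that lemma. The only point needing care is the isometry of $P$ in the orientation, and the acyclic construction settles it.
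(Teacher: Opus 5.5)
Your proposal is correct and is essentially the paper's own argument: the paper obtains the corollary by applying the construction of Lemma~\ref{lem:path cover bound} to a partition of $V(G)$ into a longest geodesic and $n-\diam(G)-1$ singletons. This is the case $r=1$, $s=n-\diam(G)-1$, giving $\lgp(G)\leq 2+(n-\diam(G)-1)$, and you have also spelled out the isometry of the geodesic in the resulting orientation.
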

	
	In particular, if $G$ is not complete, then $\lgp (G) \leq n-1$. In combination with Corollary~\ref{cor:induced bipartite bound}, this yields the following result.
	
	\begin{theorem}\label{thm:conjecturetruecomparability}
		Conjecture~\ref{conj:orientation} is true for comparability graphs.
	\end{theorem}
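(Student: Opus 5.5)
The plan is to compare the two ends of the spectrum directly. Let $G$ be a connected comparability graph of order $n \geq 3$. By Corollary~\ref{cor:comparability}, $G$ has a transitive orientation, and so $\ugp (G) = n$. It therefore suffices to exhibit an orientation of $G$ whose general position number is at most $n-1$, that is, to show $\lgp (G) \leq n-1$.

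We split into two cases according to whether $G$ is complete.

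\emph{$G$ is not complete.} Since $G$ is connected, $\diam (G) \geq 2$. Corollary~\ref{cor:lgp vs diam} then gives $\lgp (G) \leq n-\diam (G)+1 \leq n-1 < n = \ugp (G)$. Concretely, orienting a longest geodesic $u_0,u_1,u_2,\ldots$ as a directed path, as in Lemma~\ref{lem:path cover bound}, makes $u_0,u_1,u_2$ a directed geodesic of length 2. Hence not all of $V(G)$ can be in general position in that orientation.

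\emph{$G$ is complete.} Here the diameter bound is useless, so this is the only genuinely different case. Since $G=K_n$ with $n \geq 3$, it is traceable, and Lemma~\ref{lem:path cover bound} gives $\lgp (K_n) \leq 2\rho (K_n) = 2$. Alternatively, any non-transitive tournament works: for example, orient a triangle cyclically and the remaining edges arbitrarily, and then Corollary~\ref{cor2.5} shows the general position number is less than $n$. In either form we get $\lgp (K_n) \leq 2 < 3 \leq n = \ugp (K_n)$.

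Combining the two cases gives $\ugp (G) = n > \lgp (G)$ for every connected comparability graph of order at least 3. I expect no real obstacle here. The only point needing care is that Corollary~\ref{cor:lgp vs diam} fails to separate the parameters exactly when $\diam (G)=1$, which is why complete graphs must be handled separately.
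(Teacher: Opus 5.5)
Your proof is correct and follows the paper's route: $\ugp(G)=n$ comes from Corollary~\ref{cor:comparability}, and $\lgp(G)\le n-1$ comes from Corollary~\ref{cor:lgp vs diam} when $G$ is not complete. Your explicit handling of $K_n$ via Lemma~\ref{lem:path cover bound} spells out a case the paper leaves implicit (it is already covered by Corollary~\ref{cor:true for long paths}); note only that your alternative cyclic-triangle argument gives $\lgp(K_n)\le n-1$ rather than $\le 2$, which still suffices.
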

	
	We mention a few of the best known bounds on the path cover number, which serve as upper bounds for $\lgp (G)$. These will enable us to prove our main conjecture for graphs with small maximum degree. It was conjectured in~\cite{Magnant} that if $G$ is $d$-regular, then $\rho (G) \leq \frac{n}{d+1}$ and in the same paper the conjecture was proven for $d \leq 5$. A stronger result for cubic graphs was given in Reed~\cite{Reed}, namely that the path cover number of a connected cubic graph with order $n$ is at most $\left \lceil \frac{n}{9} \right \rceil $.
	
	\begin{theorem}{\rm \cite{Magnant}}\label{thm:Magnant}
		For $d \leq 5$, if $G$ is $d$-regular, then $\rho (G) \leq \frac{n}{d+1}$.
	\end{theorem}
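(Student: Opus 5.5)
This is a result quoted from~\cite{Magnant}, and the paper uses it only as a black box. If I had to reprove it, I would use the standard extremal path-cover argument combined with a discharging count. Fix a $d$-regular graph $G$ with $d \leq 5$. Among all path covers of $G$ with $\rho (G)$ paths, choose $\mathcal{P}$ to optimise a secondary parameter. My first choice would be to maximise the number of paths with at least $d+1$ vertices; failing that, to lexicographically maximise the sorted vector of path lengths. The target is to show $\sum _{P \in \mathcal{P}} |V(P)| \geq (d+1)|\mathcal{P}|$ via a weighting in which each path receives charge at least $d+1$ while each vertex hands out total charge at most $1$. This gives $\rho (G)(d+1) \leq n$.

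The first step collects the local exchange facts forced by minimality, in the Pósa-rotation style:
\begin{itemize}
\item An endpoint of one path is never adjacent to an endpoint of another path, since otherwise the two paths merge.
\item A path is never closable into a cycle that has an edge leaving it, since otherwise we could rotate and attach.
\item If an endpoint $v$ of $P$ is adjacent to an interior vertex $u$ of $Q$, then neither neighbour of $u$ on $Q$ is an endpoint of any path, including endpoints of $P$ and endpoints created by rotating $P$ at $v$.
\end{itemize}
The last fact holds because otherwise splitting $Q$ at $u$ and gluing would reduce the number of paths. Rotations of $P$ about its endpoint generate a family of alternative endpoints, all of which must satisfy these same constraints.

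The second step is the charge. Each endpoint $v$ of a short path $P$ (one with $|V(P)| \leq d$) sends demand along its $d$ edges. At most $|V(P)|-1$ of these edges stay inside $P$, so at least $d+1-|V(P)|$ go to interior vertices of other paths. Those receiving vertices then transfer a unit of charge to $P$, and the exchange facts are used to bound how many short paths can draw on the same vertex or on the same pair of consecutive vertices of a long path. Together with the secondary extremality, these facts should show that long paths have surplus vertices exactly where short paths attach.

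The main obstacle is this final accounting: proving that no vertex is over-charged. The constraints only forbid certain configurations around each attachment point, so the bound on the number of endpoints that can attach near a single vertex depends on $d$. I expect the argument to reduce to a finite case analysis of how up to $d-1$ foreign neighbours of an endpoint can be interleaved on other paths, and this is exactly where $d \leq 5$ enters, since the cases become unmanageable beyond that. I would first carry out the case $d=3$, where every short path has at most $3$ vertices, to calibrate the discharging rules, and then extend the rules to $d=4,5$.
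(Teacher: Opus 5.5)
The paper gives no proof of this statement: it is quoted from \cite{Magnant} and used only as a black box, so there is no in-paper argument to compare with. Judged on its own, your proposal is a plan rather than a proof. The charging rule is never pinned down. The claim that no vertex is over-charged is left as an expectation (``should show'', ``I expect''), yet it is the whole content of the theorem and the only place where $d\le 5$ could enter.

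More seriously, the local facts you collect cannot support any such accounting. Nothing in them prevents an interior vertex $q_i$ of a path $Q=q_1\dots q_k$ from being adjacent to $d-2$ single-vertex paths $\{v_1\},\dots,\{v_{d-2}\}$. The only rerouting available, e.g.\ $v_1q_iv_2$ together with $q_1\dots q_{i-1}$ and $q_{i+1}\dots q_k$, keeps the number of paths unchanged. Each such singleton needs $d$ units, one from each of its $d$ foreign neighbours. The most your facts give (with the third one corrected, see below) is that attachment points on $Q$ are pairwise non-consecutive and are not endpoints of $Q$. So $Q$ may be asked for $(d-2)\lceil (k-2)/2\rceil$ units. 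For $d\in\{4,5\}$ this leaves $Q$ with fewer than $d+1$ whatever its length, and a path with exactly $d+1$ vertices has no surplus at all. What is missing is a set of genuinely new exchange lemmas controlling vertices adjacent to several endpoints and their neighbourhoods on $Q$, together with a real use of your secondary extremality. None of this is supplied.

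Two of your listed facts are also not consequences of minimality as stated.
\begin{itemize}
\item For the second fact, suppose $P$ closes into a cycle $C$ and the leaving edge goes to an interior vertex $q_i$ of $Q$. Opening $C$ at that edge gives a path ending $q_iq_{i+1}\dots q_k$, plus the path $q_1\dots q_{i-1}$, so the number of paths does not drop. ``Rotate and attach'' yields a contradiction only when the edge hits an endpoint of another path. Your secondary choice excludes the interior case only in special situations.
\item The third fact, as written, is false. If $u=q_2$, then the endpoint $q_1$ of $Q$ is a neighbour of $u$, but rerouting gives only $P\,q_2\dots q_k$ and $\{q_1\}$, again the same count. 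The correct lemma is that $q_{i-1}$ and $q_{i+1}$ are not \emph{adjacent} to an endpoint of a third path $R$, nor to the other endpoint of $P$.
\item The proof of that corrected lemma needs two gluings, for instance $P$ followed by $q_iq_{i-1}\dots q_1$, and $R$ followed by $q_{i+1}\dots q_k$. That is what turns three paths into two. A single split-and-glue at $u$, as in your justification, never lowers the count.
\end{itemize}
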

	
	\begin{theorem}{\rm \cite{Reed}}\label{thm:Reed}
		The path cover number of a connected cubic graph $G$ is bounded above by $\rho (G) \leq \left \lceil \frac{n}{9} \right \rceil $
	\end{theorem}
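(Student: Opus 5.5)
This is a result quoted from Reed~\cite{Reed}, and the paper only uses it as a black box. If I were to prove it, I would start from an extremal path cover and use a charging argument. Fix a path cover $\mathcal{P}$ of the connected cubic graph $G$ with the minimum number $\rho(G)$ of paths. Among such covers, I would choose one that is lexicographically extremal, for example maximising the number of paths of length at least one, and then the sum of squares of path orders. The aim is to show that the paths can be paired with disjoint vertex sets so that every path except at most one is charged at least $9$ vertices. This gives $\rho(G) \leq \lceil n/9\rceil$.

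\textbf{Step 1: structural facts from minimality.} No endpoint of a path $P$ is adjacent to an endpoint of a different path $Q$, since otherwise $P$ and $Q$ merge. An isolated vertex (a path of length $0$) has all three neighbours in the interiors of other paths. An endpoint $x$ of $P$ whose neighbours lie on $P$ gives P\'osa rotations: if $x$ is adjacent to $v_j$ on $P = v_0,\ldots ,v_\ell$ with $x = v_0$, then $v_{j-1}$ becomes a new endpoint. Closing the set of endpoints under rotations, I get a set $\mathrm{End}(P)$ of possible endpoints. The first claim is that no vertex of $\mathrm{End}(P)$ is adjacent to a vertex of $\mathrm{End}(Q)$ for $Q \neq P$, because otherwise the number of paths drops. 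The second claim is that if $\mathrm{End}(P)$ is adjacent to an interior vertex $w$ of $Q$, then neither neighbour of $w$ on $Q$ lies in $\mathrm{End}(Q)$ in a way that allows a splice, since a splice would either reduce the number of paths or improve the secondary parameter.

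\textbf{Step 2: lower bounds on each path.} Since $G$ is cubic, each endpoint has at least two neighbours other than its path-successor. Following Pósa's lemma, the rotation closure gives $|N(\mathrm{End}(P))| \leq 2|\mathrm{End}(P)|$ within $P$, adjusted for the cubic setting. From this I would show that a path whose ends have all their neighbours on itself has many vertices: a short path of this kind would force a small component, contradicting connectivity unless $P$ is the only path.

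\textbf{Step 3: discharging.} Each endpoint sends edges to interior vertices of other paths, and each such interior vertex can absorb only a bounded amount of charge, because it has degree $3$ and two of its edges are used by its own path. Each path starts with charge equal to its order. An endpoint then sends charge along its off-path edges to vertices of longer, ``rich'' paths. I would verify case by case, according to the small possible orders and shapes of paths, that every final charge except one is at least $9$.

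\textbf{Main obstacle.} I expect the difficulty to lie entirely in Step 3. Short paths, especially isolated vertices and paths of order at most $4$, must borrow from neighbouring paths. Ruling out configurations in which many short paths crowd around one long path requires the subtle splice exclusions from Step 1, applied with two or three simultaneous rotations. I would first try to handle length-$0$ and length-$1$ paths by showing that their neighbours on a long path are pairwise far apart, at distance at least $3$ along that path. Then I would attempt a uniform rule of $1/3$ charge per off-path edge and check whether the constant $9$ emerges. If it does not, I would have to adopt Reed's finer case analysis.
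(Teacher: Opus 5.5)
The paper gives no proof of this statement. It is quoted from Reed and used only as a black box, so your opening sentence matches the paper's treatment. The sketch you attach is a plan rather than a proof, and its gap is the entire theorem. The constant $9$ can only come out of Step~3, and there you neither fix a discharging rule nor verify one. As written, charge even flows the wrong way, from ends of short paths towards rich paths, although you later say short paths must borrow.

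The one concrete rule you propose cannot work. An isolated vertex has exactly three off-path edges, so $1/3$ per edge leaves it with charge at most $1+3\cdot\frac13=2$. To reach $9$, about $8/3$ units on average must cross each of its edges. But each interior vertex of a long path holds one unit, and that path must itself keep $9$. This is affordable only if the vertices of long paths that are adjacent to ends of short paths are sparse along them. Proving such sparsity for a suitably chosen extremal cover is the real content of Reed's argument, and it is missing.

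Your candidate lemma (distance at least $3$) does not follow from your extremal choice. Minimality excludes an isolated $x$ adjacent to consecutive vertices $q_i,q_{i+1}$ of a path $Q$, since you can insert $x$ between them. But if $x\sim q_i,q_{i+2}$, replacing $q_{i+1}$ by $x$ in $Q$ gives another cover with the same number of paths and the same multiset of path orders. So neither of your tie-breakers excludes this configuration.

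Step~2 is false as literally stated. Let $B$ be $K_4$ on $\{a,b,c,d\}$ with the edge $ab$ subdivided by a vertex $s$. Take a vertex $v$ with neighbours $u_1,u_2,u_3$, and join each $u_i$ to the vertex $s$ of two copies of $B$. This connected cubic graph has $n=34$ and $\rho=4=\lceil 34/9\rceil$: each of the six pendant copies of $B$ must contain an end of some path, and one checks that covering $v$ forces two further ends.

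Consider the cover consisting of:
\begin{itemize}
\item a path through a copy of $B$, then $u_1,v,u_2$, then another copy;
\item a path through $u_3$ and both of its copies;
\item the paths $c,a,s,b,d$ in the two remaining copies.
\end{itemize}
This cover is minimum and optimal for both of your tie-breakers. Yet its two $5$-vertex paths have ends $c,d$ all of whose neighbours lie on the path, and there is no small component.

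The same example shows that the ends of a short path in your extremal cover may have no off-path edge at all. So an end-based charging rule cannot even be stated until you pass to rotated ends (here $s$). As it stands, the proposal reduces the theorem to an unformulated case analysis; the correct course, as in the paper, is to cite Reed.
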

	
	\begin{theorem}{\rm \cite{Hartman}}
		For any graph $G$ with independence number $\alpha (G)$ and connectivity $\kappa (G)$, $\rho \leq \alpha (G)$, which can be improved to $\rho (G) \leq \alpha (G)-\kappa (G)$ when $\alpha (G) > \kappa (G)$.
	\end{theorem}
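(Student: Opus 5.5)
This is a cited result of Hartman, so the plan is to reprove the classical facts $\rho(G)\le\alpha(G)$ and $\rho(G)\le\alpha(G)-\kappa(G)$ for $\alpha(G)>\kappa(G)$ rather than appeal to the reference. Both go through a path cover with the fewest paths, chosen to be extremal in a further sense, together with rotation arguments of Pósa/Chvátal--Erdős type.

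\emph{First bound.} Take a path cover $\{P^{(1)},\ldots ,P^{(\rho)}\}$ with $\rho=\rho(G)$ paths. Among all such covers, choose one for which the multiset of path lengths is lexicographically largest. Let $x_i$ be an endpoint of $P^{(i)}$.
\begin{itemize}
\item If $x_i\sim x_j$ for some $i\neq j$, the two paths can be concatenated through that edge. This gives a cover with $\rho-1$ paths, contradicting minimality.
\item Hence $\{x_1,\ldots ,x_\rho\}$ is independent, and so $\rho\le\alpha(G)$. If some path is a single vertex, its unique vertex serves as $x_i$ and the same argument applies.
\end{itemize}

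\emph{Second bound.} Assume $\alpha>\kappa$ and set $k=\kappa(G)$, noting that $k\ge 1$ when $G$ is connected. The aim is to find $\rho+k$ pairwise nonadjacent vertices.
\begin{itemize}
\item Take the endpoint set $X=\{x_1,\ldots ,x_\rho\}$, where $x_i$ is the terminal vertex of $P^{(i)}$. Close $X$ under Pósa rotations: whenever $x_i$ has a neighbour $w$ on $P^{(i)}$, reversing the segment after $w$ makes the successor $w^+$ a new endpoint.
\item Let $X^*$ be the set of all vertices that can become endpoints in a minimum cover.
\item Every vertex of $X^*$ has all its neighbours on paths in positions adjacent to rotation points. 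Otherwise paths could be merged, or a cycle could be formed and reopened into a longer path, contradicting the extremal choice.
\item If $X^*$ together with its structure were too small, the set $N$ of predecessors and successors of $X^*$ would separate $X^*$ from the rest of the graph.
\item Menger's theorem supplies $k$ vertex-disjoint paths from a chosen endpoint set to the remainder. Following the Chvátal--Erdős argument, I would use them to locate $k$ further vertices, namely the successors on the paths of the attachment points. These together with $X$ form an independent set of size $\rho+k$.
\item Each potential edge inside this set yields a rerouting that merges two paths, again contradicting minimality.
\end{itemize}

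\emph{Main obstacle.} The hard step is the careful case analysis in the second bound, where the rerouting is done along Menger paths so that path count strictly drops. It is exactly the path analogue of the Chvátal--Erdős proof that $\alpha\le\kappa$ implies Hamiltonicity. I would model the argument on that proof, replacing the single cycle by the family of $\rho$ paths and the cycle successor map by the path successor map.
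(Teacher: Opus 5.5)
The paper gives no proof of this statement; it only quotes Hartman. So your argument has to be judged on its own terms.

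\textbf{First bound.} This part is correct and complete. Orient $P^{(i)}$ to end at $x_i$ and $P^{(j)}$ to start at $x_j$. An edge $x_ix_j$ then concatenates the two paths, which contradicts minimality, so the endpoints form an independent set of size $\rho$. The lexicographic choice of cover is not needed here.

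\textbf{Second bound.} This part has a genuine gap. The step everything rests on is ``each potential edge inside this set yields a rerouting that merges two paths''. That step is false once there are several paths.

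In Chv\'atal--Erd\H{o}s there is a vertex $v$ off a longest cycle, and every splice keeps a single cycle while gaining a vertex. In a minimum path cover every vertex is already covered, so the ``remainder'' for your Menger paths is undefined. Moreover, splices across different paths do not reduce the number of paths. Write $P^{(a)}=y_a\ldots u_iu_i^+\ldots x_a$ and $P^{(b)}=y_b\ldots u_ju_j^+\ldots x_b$ with $a\neq b$. Suppose $x_1\sim u_i$, $x_1\sim u_j$ and $u_i^+\sim u_j^+$. The natural rerouting replaces $P^{(1)},P^{(a)},P^{(b)}$ by three paths:
\[
P^{(1)}-x_1,\qquad y_a\ldots u_i\,x_1\,u_j\ldots y_b,\qquad x_a\ldots u_i^+\,u_j^+\ldots x_b.
\]
Minimality is contradicted only when $P^{(1)}=\{x_1\}$.

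Similarly, nothing forces your $k$ new vertices to be nonadjacent to the endpoints $x_j$ of the other paths. A single edge $x_1u$ with $u$ interior to $P^{(a)}$ only gives the rotation $y_1\ldots x_1u\ldots y_a$, $u^+\ldots x_a$, which keeps the number of paths. The P\'osa closure $X^*$ and the separation claim are never connected to producing $\rho+\kappa$ independent vertices. Those are exactly the steps you flag as the ``main obstacle'' and leave undone, so $\rho\le\alpha-\kappa$ is not proved.

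\textbf{The missing idea.} You need a device that turns ``fewer paths'' into ``a longer cycle''.
\begin{enumerate}
\item Let $t=\alpha-\kappa\ge 1$. Add a set $T$ of $t$ new, pairwise nonadjacent vertices, each joined to all of $V(G)$, and call the result $G'$.
\item Independence: an independent set of $G'$ meeting $T$ lies inside $T$. Since $t\le\alpha$, we get $\alpha(G')=\alpha(G)$.
\item Connectivity: if a separating set $S$ of $G'$ misses some $z\in T$ and does not contain $V(G)$, then $G'-S$ is connected through $z$. So either $S\supseteq V(G)$, giving $|S|\ge n\ge\alpha$, or $S\supseteq T$ and $S\cap V(G)$ separates $G$, giving $|S|\ge\kappa+t=\alpha$. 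Hence $\kappa(G')\ge\alpha(G')$.
\item By Chv\'atal--Erd\H{o}s, $G'$ is Hamiltonian. The case $|V(G')|\le 2$ means $G=K_1$, which is trivial.
\item Since $T$ is independent, no two of its vertices are consecutive on a Hamilton cycle. Deleting $T$ therefore leaves exactly $t$ disjoint paths covering $V(G)$, so $\rho(G)\le\alpha-\kappa$.
\end{enumerate}

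If you want to avoid citing Chv\'atal--Erd\H{o}s, run its proof on a longest cycle of $G'$. In terms of $G$, that is a system of boundedly many paths covering as many vertices as possible, and an uncovered vertex then genuinely plays the role of $v$. This is the correct version of the direct approach you were aiming for.
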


	\begin{theorem}\label{thm:max degree 5}
		Conjecture~\ref{conj:orientation} is true for triangle-free subcubic graphs and $d$-regular graphs for $4 \leq d \leq 5$.
	\end{theorem}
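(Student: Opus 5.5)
Throughout, the strategy is to separate $\lgp(G)$ from $\ugp(G)$: we bound $\lgp(G)$ from above using path covers (Lemma~\ref{lem:path cover bound}) and $\ugp(G)$ from below using large induced forests or bipartite subgraphs (Corollary~\ref{cor:induced bipartite bound} together with Theorem~\ref{thm:Alon}). We may assume $G$ is connected with $n \geq 3$. By Corollary~\ref{cor:true for long paths}, any $G$ with a path of length at least $n-2$ is already settled. In particular, if $\rho(G)=1$ then $\lgp(G)=2<4\leq\ugp(G)$ when $n\geq 4$, and $n=3$ is trivial. Since $G$ is connected, there is no isolated vertex to handle separately.

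For $d$-regular graphs with $d\in\{4,5\}$, Theorem~\ref{thm:Magnant} gives $\rho(G)\leq n/(d+1)$, so Lemma~\ref{lem:path cover bound} yields $\lgp(G)\leq 2n/(d+1)\leq 2n/5$. For the upper parameter, I would apply the third bound of Theorem~\ref{thm:Alon} with $\Delta=d$:
\[
\ugp(G)\geq \ac(G)\geq \alpha(G)+\frac{n-\alpha(G)}{(d-1)^2}.
\]
A $d$-regular graph satisfies $\alpha(G)\geq n/(d+1)$ (greedy, or Brooks' theorem, which even gives $\alpha\geq n/d$ for non-complete graphs). The complete graph $K_{d+1}$ is handled separately as a comparability graph via Theorem~\ref{thm:conjecturetruecomparability}. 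Taking $\alpha\geq n/d$, we get $\ac(G)\geq n/d + (n-n/d)/(d-1)^2$. For $d=4$ this gives $n/4+n/12=n/3$, and for $d=5$ it gives $n/5+n/20=n/4$. These exceed $2n/5$ and $2n/6=n/3$ respectively only if we can improve slightly. If this naive comparison fails, I would instead use $\ugp(G)\geq\alpha_2(G)$. Every graph has an induced bipartite subgraph on at least $2\alpha$ vertices? That is false in general, so the better route is the standard bound $\alpha_2(G)\geq 2n/(d+1)$ for graphs of maximum degree $d$, obtained by splitting into two independent sets greedily (or via Brooks colourings: take the two largest colour classes of a $d$-colouring, giving $\alpha_2\geq 2n/d > 2n/(d+1)$). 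This comparison is strict: $\ugp\geq 2n/d>2n/(d+1)\geq\lgp$. So the Brooks-colouring argument is the one I would commit to, with complete graphs excluded via Theorem~\ref{thm:conjecturetruecomparability}.

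For triangle-free subcubic graphs, the first bound of Theorem~\ref{thm:Alon} gives $\ugp(G)\geq\ac(G)\geq 5n/8$, provided the maximum degree equals $3$. If the maximum degree is at most $2$, then $G$ is a path or cycle, hence traceable, and Corollary~\ref{cor:true for long paths} applies. For the lower parameter, I would exhibit a small path cover. When $G$ is cubic and triangle-free, Theorem~\ref{thm:Reed} gives $\lgp(G)\leq 2\lceil n/9\rceil$, which is far below $5n/8$ once $n\geq 4$. Non-regular subcubic graphs are the main obstacle, since neither Reed's nor Magnant's theorem applies. Here I would bound crudely: take a maximum matching $M$ and cover $V(G)$ by the edges of $M$ together with the unmatched vertices. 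Lemma~\ref{lem:path cover bound} then gives $\lgp(G)\leq 2|M|+(n-2|M|)=n$, which is useless, so a better approach is needed. The route I would actually follow is to invoke a linear forest decomposition: every subcubic graph has a spanning linear forest with few components. Alternatively, a simpler trick is to use the diameter bound of Corollary~\ref{cor:lgp vs diam} in combination with the path cover bound. Specifically, I would take a spanning linear forest whose components are paths, and cover the vertex set by at most $n/4$ paths of length at least one (plausible for connected subcubic graphs with $n\geq 4$). This gives $\lgp(G)\leq n/2<5n/8\leq\ugp(G)$. Establishing that path-cover estimate, namely $\rho(G)\leq n/4$ for connected subcubic graphs outside small exceptions checked by hand or computer for $n\leq 6$, is the step I expect to be hardest.
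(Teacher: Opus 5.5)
\textbf{There is a genuine gap in the non-regular subcubic case.} There you plan $\lgp(G)\le 2\rho(G)\le n/2<5n/8\le\ac(G)\le\ugp(G)$. This rests on the claim that $\rho(G)\le n/4$ for connected subcubic graphs apart from small exceptions, and that claim is false.

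Take a $5$-cycle $c_1\cdots c_5$, join a new vertex $x_i$ to each $c_i$, and give each $x_i$ two pendant leaves. This graph is connected, triangle-free, non-bipartite, has maximum degree $3$ and $n=20$. Its $10$ leaves must be ends of their paths, so a cover by $5$ paths would need every path to join two leaves. But the path through $c_1$ must then reach a leaf via some edge $c_ix_i$, which makes $x_i$ interior and forces the other leaf at $x_i$ to be a singleton. Hence $\rho(G)\ge 6>n/4$.

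Even the weaker inequality $2\rho(G)<5n/8$, which is all your comparison needs, can fail. Hang from each $c_i$ a complete binary tree of depth $2$ instead: root $w_i$ adjacent to $c_i$, children $x_i,x_i'$, each with two leaves, so $n=40$. Let $E$ be the set of path ends and $s$ the number of one-vertex paths; then $\rho=(|E|+s)/2$. Each hanging tree contributes at least $5$ to $|E|+s$. Either one of its four leaves is a singleton, or the leaves at $x_i$ and at $x_i'$ form the two $3$-vertex paths through $x_i$ and $x_i'$, leaving $w_i$ with at most one path-neighbour, so $w_i\in E$. Thus $\rho(G)\ge 13$ and $2\rho(G)\ge 26>25=5n/8$. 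So no estimate of the form $\lgp(G)\le 2\rho(G)$ can be played off against Alon et al.'s $5n/8$ here, and a genuinely different ingredient is needed for non-regular subcubic graphs.

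\textbf{Comparison with the paper.} The paper disposes of subcubic graphs in one line by citing Theorem~\ref{thm:Reed}, which is a statement about connected cubic graphs only. So its argument, like yours, really establishes just the triangle-free cubic case, and you were right to single out the non-regular case as the obstacle.

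Your $d$-regular argument is correct and takes a different route from the paper's. The paper compares $2n/(d+1)$ with $\ac(G)\ge n-m/4=(1-d/8)n$, which is the triangle-free bound in Theorem~\ref{thm:Alon}. You instead use Brooks' theorem to get $\ugp(G)\ge\alpha_2(G)\ge 2n/d>2n/(d+1)\ge\lgp(G)$ for every connected non-complete $d$-regular graph, and handle $K_{d+1}$ by Theorem~\ref{thm:conjecturetruecomparability}. This needs no triangle-freeness, and it works for any $d$ for which $\rho(G)\le n/(d+1)$ is available.
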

	\begin{proof}
		Let $G$ be a $d$-regular graph for some $3 \leq d \leq 5$. Theorem~\ref{thm:Magnant} and Lemma~\ref{lem:path cover bound} show that $\lgp (G) \leq 2\rho (G) \leq \frac{2n}{d+1}$. Theorem~\ref{thm:Alon} and Corollary~\ref{cor:induced bipartite bound} together imply that \[ \ugp (G) \geq \ac (G) \geq n-\frac{m}{4} = \left ( 1-\frac{d}{8}\right ) n. \] For $d \leq 5$ we have $\frac{2}{d+1} < 1-\frac{d}{8}$. The result for subcubic graphs follows similarly from the first part of Theorem~\ref{thm:Alon} and Theorem~\ref{thm:Reed}.
	\end{proof}
	
	Finally, we give some information on the structure of a smallest counterexample to Conjecture~\ref{conj:orientation}.
	
	\begin{theorem}
		If $G$ is a smallest counterexample to Conjecture~\ref{conj:orientation}, then $G$ is 2-connected and has no split.
	\end{theorem}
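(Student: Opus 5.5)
We argue by contradiction. In each case we take a decomposition of $G$ into smaller pieces, use minimality to obtain two orientations of each piece with different general position numbers, and combine them into two orientations of $G$ with different general position numbers. The tools are Lemma~\ref{lem:addingsinks} for joins and the isometric/convex subdigraph facts (Theorem~\ref{thm:isometric cover}, Lemma~\ref{lem:convex}) for cut vertices. Before splitting into cases, note that $G$ is connected with $n\geq 7$, since the conjecture has been verified by computer for $n\leq 6$.

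\emph{No split.} Suppose $V(G)=X\cup Y$ is a split. Orient every edge of $(X,Y)$ from $X$ to $Y$. For any orientations $D_X$ of $G[X]$ and $D_Y$ of $G[Y]$, the resulting orientation of $G$ is exactly the join $D_X\vee D_Y$. By Lemma~\ref{lem:addingsinks} its general position number is $\gp(D_X)+\gp(D_Y)$.

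If $G[X]$ has two orientations with different general position numbers, fix any $D_Y$ and vary $D_X$; this already gives $\ugp(G)>\lgp(G)$. A spectrum of size one happens trivially when $|X|\leq 2$. For $|X|\geq 3$, minimality applies to each weak component of $G[X]$ with at least three vertices, and spectra add over disjoint unions, since the general position number of a disjoint union is the sum over its components. So the only obstruction is that every component of both $G[X]$ and $G[Y]$ has at most two vertices, i.e.\ $G[X]$ and $G[Y]$ are disjoint unions of $K_1$'s and $K_2$'s.

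In that residual case $G$ is a join of two graphs of maximum degree at most one. Every such graph is traceable (for $|X|,|Y|\geq 1$ one can alternate between the two sides), so Corollary~\ref{cor:true for long paths} settles it directly. Hence a minimal counterexample has no split.

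\emph{2-connectedness.} Suppose $v$ is a cut vertex and $G=G_1\cup G_2$ with $V(G_1)\cap V(G_2)=\{v\}$, both $G_i$ having at least two vertices. Orient every edge at $v$ in $G_2$ away from $v$ (making $v$ a source relative to $G_2-v$), and orient the rest of $G_2$ arbitrarily but fixed. Then any shortest path between two vertices of $G_1$ stays in $G_1$, so $G_1$ is convex. Any shortest path between vertices of different sides passes through $v$.

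Varying the orientation of $G_1$ between an orientation $D_1$ with general position number $\lgp(G_1)$ and one with $\ugp(G_1)$ (distinct by minimality if $|V(G_1)|\geq3$), we want the global number to change. The plan is to make $G_2$ contribute rigidly, by choosing its orientation so that $G_2-v$ has a fixed optimal set whose distances through $v$ are compatible. The cleaner alternative is to compare an orientation $D$ with $\gp(D)=\lgp(G)$ against a modification: take a $\gp$-set $S$ of an orientation realising $\ugp(G_1)$ on $G_1$, and use Theorem~\ref{thm:comparability subgraphs}. The union of induced comparability subgraphs of $G_1$ and $G_2$ meeting at most in $v$ is again a comparability graph after orienting consistently at $v$ (a cut vertex joins comparability graphs into one). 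Hence $\ugp(G)\geq \ugp(G_1)+\ugp(G_2)-1$.

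Conversely, from path-cover-type orientations, Lemma~\ref{lem:path cover bound} together with Theorem~\ref{thm:isometric cover} on the cover $\{G_1,G_2\}$ (both isometric when all edges at $v$ point consistently) give $\lgp(G)\leq \lgp(G_1)+\lgp(G_2)$.

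This is the main obstacle: the additive constants do not immediately separate, since we need $\ugp(G_1)+\ugp(G_2)-1>\lgp(G_1)+\lgp(G_2)$. I would first handle the case where one side, say $G_2$, is a single edge (so $v$ is adjacent to a leaf) separately. There one can use Lemma~\ref{lem:ext} and the leaf to show that the leaf can always be added to a $\gp$-set of an upper orientation while a lower orientation absorbs it into a path. When both sides have order at least three, minimality gives $\ugp(G_i)\geq\lgp(G_i)+1$, so the left side is at least $\lgp(G_1)+\lgp(G_2)+1$, which suffices.
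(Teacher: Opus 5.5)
\textbf{Gap in the cut-vertex case.} Your inequality $\ugp(G)\geq\ugp(G_1)+\ugp(G_2)-1$ relies on orienting two comparability graphs ``consistently at $v$'', and this is not always possible. Take the bull: the triangle $a,v,b$ with pendants at $a$ and $b$. In a transitive orientation, every induced $P_3$ forces its two ends onto the same side of its middle vertex. So $a$ and $b$ must be the source and sink of the triangle, and $v$ always has both an in-neighbour and an out-neighbour. Glue two bulls at $v$. Any orientation that is transitive on both bulls then contains a 2-path $x_1\rightarrow v\rightarrow y_2$ with $x_1\not\sim y_2$. Hence this $9$-vertex graph is not a comparability graph, and $\ugp(G)\leq 8<5+5-1$.

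What does hold is $\ugp(G)\geq\ugp(G_1)+\ugp(G_2)-2$ (delete $v$ from both sides). Combined with $\lgp(G)\leq\lgp(G_1)+\lgp(G_2)$ and $\ugp(G_i)\geq\lgp(G_i)+1$, this gives only $\ugp(G)\geq\lgp(G)$, not strict inequality. Your pendant case is only sketched, and its heuristic also fails. Adding a pendant at $v$ to the bull gives the net, which is not a comparability graph, so the leaf cannot always be added to a $\gp$-set of an upper orientation.

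The paper avoids double counting at the cut vertex $w$ by working with the components $W_1,\dots,W_r$ of $G-w$ and orienting every edge at $w$ away from $w$. Each $W_i$ is then convex and no $W_i$ reaches another. Upper orientations on the $W_i$ therefore give $\gp\geq\sum_i\ugp(W_i)$ with no loss at $w$. Lower orientations give $\gp\leq 1+\sum_i\lgp(W_i)$ by Theorem~\ref{thm:isometric cover}. This separates the two numbers whenever $t\geq 2$ components have order at least $3$. The case $t=1$, which contains your pendant case, is handled by redirecting the arcs between $w$ and a small component towards $w$. The case $t=0$ yields a comparability graph.

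\textbf{Residual split case.} Your passage to weak components is more careful than the paper's, which applies minimality to $G_2$ without checking that it is connected. However, the residual case is wrong as stated. A join of two graphs of maximum degree at most $1$ need not be traceable or contain a path of length $n-2$; examples are $K_{1,6}=K_1\vee 6K_1$ and $K_1\vee 3K_2$. The repair is immediate. Orient each side transitively and all edges of $(X,Y)$ from $X$ to $Y$; this orientation is transitive. So $G$ is a comparability graph and Theorem~\ref{thm:conjecturetruecomparability} applies.
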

	\begin{proof}
		Let $G$ be a counterexample to the conjecture with smallest possible order. Suppose that $G$ has a cutvertex $w$. Write the components of $G-w$ as $W_1,\ldots ,W_r$, $r \geq 2$, in non-increasing order of cardinality, and the induced subgraphs $G[V(W_i)\cup \{ w\} ]$ as $G_i$ for $1 \leq i \leq r$. Let there be $t$ components $W_i$ with order $\geq 3$.
		
		By induction the component $W_i$ has orientations with different general position numbers for $1 \leq i \leq t$. Start by orienting the edges of $G$ incident with $w$ away from $w$. If we give each $W_i$ an orientation with general position number $\ugp (W_i)$, then the resulting orientation has general position number at least $\sum _{i=1}^r\ugp (W_i)$ (possibly one more, if $w$ can be added to each general position set of $W_i$ to give a general position set in each $G_i$). If we now orient each $W_i$ to give an orientation with general position number $\lgp (W_i)$, then the orientation has general position number at most $1+\sum _{i=1}^r\lgp (W_i)$. Hence if $t \geq 2$ we have exhibited orientations with different general position numbers. If $t = 1$, alter the second orientation by directing all arcs in $(w,W_r)$ towards $w$. In this case, if $w$ is contained in a largest general position set, then it can be replaced by a vertex in $W_r$, so that again we have strict inequality. Finally, if $t = 0$, then $G$ consists of a vertex connected to copies of $K_1$ and $K_2$, and $G$ is a comparability graph, contradicting Theorem~\ref{thm:conjecturetruecomparability}. 
		
		Now suppose that $G$ has a split, or equivalently $G = G_1 \vee G_2$ for smaller graphs $G_1,G_2$. As the conjecture is true for graphs with order $\leq 6$, we can assume that at least one of $G_1,G_2$, say $G_2$, has order at least 3. By minimality of $G$, there are orientations $G_2^-$ and $G_2^+$ with different general position numbers. Give $G_1$ any orientation, and orient all edges in $(G_1,G_2)$ towards $G_2$. Then by Lemma~\ref{lem:addingsinks} giving $G_2$ the orientations $G_2^-$ and $G_2^+$ will result in orientations of $G$ with distinct general position numbers, a contradiction.
	\end{proof}

	\subsection{General position spectra of families}
	
	In this subsection we compute the $\gp $-spectra of some simple graph families. A case of particular interest is when the orientations of $G$ assume every value allowed by Proposition~\ref{prop1}.
	
	\begin{definition}
		A graph $G$ with order $n$ is \emph{$\gp $-full} if $S_{\gp}(G) = [2,n]$.
	\end{definition}
	
	First we show that complete graphs are $\gp $-full, or, in other words, for any $2 \leq k \leq n$ there exists a tournament with order $n$ and general position number $k$.
	
	\begin{theorem}
		Complete graphs are $\gp $-full.
	\end{theorem}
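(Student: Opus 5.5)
The plan is to build, for every $2 \leq k \leq n$, a tournament of order $n$ with general position number exactly $k$. I would obtain it as a join of two tournaments: one with general position number $2$, and one transitive tournament. Lemma~\ref{lem:addingsinks} then adds the two general position numbers. A join of two tournaments is again a tournament, since all arcs between the two parts are present and point one way. So everything reduces to two building blocks.

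\emph{Building block 1: a tournament with $\gp = 2$ on any number $m \geq 2$ of vertices.} Take vertices $v_1,\ldots ,v_m$ with arcs $v_i \rightarrow v_{i+1}$ for $1 \leq i \leq m-1$, and orient every remaining pair backwards, i.e.\ $v_j \rightarrow v_i$ whenever $j \geq i+2$. This is exactly the orientation used in Lemma~\ref{lem:path cover bound} for a single Hamilton path of $K_m$. I would then check that $v_1,v_2,\ldots ,v_m$ is a geodesic. The only arcs going forward in the index are the consecutive ones $v_i \rightarrow v_{i+1}$, so any directed $v_1,v_m$-path has length at least $m-1$. Hence $\diamm \geq m-1$, and Lemma~\ref{lem:diambound} gives $\gp \leq m-(m-1)+1 = 2$. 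Combined with Proposition~\ref{prop1}, this block has general position number exactly $2$. For $m=2$ it is a single arc, and for $m=3$ it is the directed triangle.

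\emph{Building block 2: the transitive tournament $T_j$ of order $j$.} By Corollary~\ref{cor2.5}, $\gp (T_j) = j$.

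\emph{Assembly.} For $k = n$, take $T_n$. For $2 \leq k \leq n-1$, let $A$ be the block-1 tournament on $m = n-k+2 \geq 3$ vertices. If $k = 2$, take $A$ itself. If $k \geq 3$, form $A \vee T_{k-2}$, which has order $n$, and Lemma~\ref{lem:addingsinks} gives $\gp (A \vee T_{k-2}) = 2 + (k-2) = k$. This covers every value in $[2,n]$, so $S_{\gp}(K_n) = [2,n]$.

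The only step needing care is the claim that the Hamilton path in block 1 is really a shortest path, i.e.\ that the backward chords create no forward shortcut. This holds because every chord points from a higher index to a lower one. Everything else is a direct citation of earlier results.
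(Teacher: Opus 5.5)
Your proof is correct and is essentially the paper's argument. The paper inducts on $n$, adding a source to a tournament of order $n-1$ via Lemma~\ref{lem:addingsinks}, and takes the Hamilton-path orientation of Lemma~\ref{lem:path cover bound} for $\gp = 2$. Unrolled, that induction yields a transitive tournament joined to a $\gp = 2$ tournament, which is your construction except that you attach the transitive part as sinks rather than sources, and this makes no difference.
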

	\begin{proof}
		We prove the statement by induction on $n$. It is true for $n = 2$.  Assume that the result holds for $K_{n-1}$, i.e.\ for each $k$ with $2 \leq k \leq n-1$, there exists a tournament $D$ such that $\gp(D) = k$.  For each such $D$, add a source vertex adjacent to every vertex of $D$, and the resulting tournament $D'$ has $\gp (D') = k+1$ by Lemma~\ref{lem:addingsinks}. As there is also a tournament $D'$ with order $n$ and $\gp (D') = 2$ by Lemma~\ref{lem:path cover bound}, this completes the induction. 
	\end{proof}
	
	In fact, the following realisation result shows that connected $\gp $-full graphs exist for any feasible order and size.
	
	\begin{theorem}
		For any integers $n$ and $m$ with $n \geq 2$ and $n-1 \leq m \leq \binom{n}{2}$, there exists a connected $\gp$-full graph of order $n$ and size $m$. 
	\end{theorem}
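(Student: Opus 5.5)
The plan is to exhibit, for each admissible pair $(n,m)$, one explicit graph $G_{n,m}$ that is simultaneously traceable and a comparability graph. Then the two extreme values of the spectrum come for free, and only the intermediate values need explicit orientations. For the construction, write $m=\binom{t}{2}+r$ with $t$ as large as possible subject to $\binom{t}{2}+r+(n-t-1)\cdot 1 = m$ (equivalently, choose $t\le n$ and $0\le r<t$ so that the count of edges works out, treating the degenerate ranges $m=n-1$ and $m=\binom n2$ separately). Take vertices $v_1,\dots,v_n$ and the following edges:
\begin{itemize}
\item a clique on $\{v_1,\dots,v_t\}$;
\item a vertex $v_{t+1}$ joined to $r\geq 1$ clique vertices, among them $v_t$;
\item a path $v_{t+1},v_{t+2},\dots,v_n$.
\end{itemize}
The size is then $\binom t2+r+(n-t-1)$, and by varying $t$ and $r$ it runs through every value from $n-1$ (where $t=2$ gives just a path) up to $\binom n2$ (where $t=n$). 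The graph is clearly connected. It is traceable: order the clique so that it ends at a neighbour of $v_{t+1}$, then follow the tail.

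Next I check that $G_{n,m}$ is a comparability graph. Orient the clique transitively so that the $r$ neighbours $N$ of $v_{t+1}$ come first, i.e.\ they form an initial segment of the transitive tournament. Make $v_{t+1}$ a sink receiving arcs from $N$. Orient the tail alternately, with $v_{t+2}\rightarrow v_{t+1}$, $v_{t+2}\rightarrow v_{t+3}$, and so on. A two-arc path $x\rightarrow y\rightarrow z$ can only occur inside the clique, or as $x\rightarrow y\rightarrow v_{t+1}$ with $y\in N$. In the second case $x\in N$ as well, since $N$ is an initial segment, so $x\rightarrow v_{t+1}$ is present. Hence the orientation is transitive. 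By Corollary~\ref{cor:comparability} we get $n\in S_{\gp}(G_{n,m})$, and by Lemma~\ref{lem:path cover bound} with a Hamilton path we get $2\in S_{\gp}(G_{n,m})$.

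For $3\le k\le n-1$, I would take the induced comparability subgraph $H_k$ on the first $k$ vertices of the Hamilton path $P=u_1,\dots,u_n$ constructed above, and orient it transitively. The remaining vertices $u_{k+1},\dots,u_n$ are oriented along $P$ as a directed path. All edges from $H_k$ to the rest are directed out of $H_k$, and chords within the remaining part go backwards, exactly as in Lemma~\ref{lem:path cover bound}. The proof of Theorem~\ref{thm:comparability subgraphs} then gives $\gp\ge k$.

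For the upper bound I would use the isometric cover $\{H_k, R\}$, where $R=\{u_{k+1},\dots,u_n\}$. The piece $R$ is isometric with $\gp(R)\le 2$ by Lemma~\ref{lem:diambound}, so Theorem~\ref{thm:isometric cover} gives only $\gp\le k+2$, which is not good enough. To sharpen it, I would route the edge $u_ku_{k+1}$ so that $u_k\rightarrow u_{k+1}$ is the unique arc leaving $H_k$ into the start of the tail. Any general position set $S$ meeting $R$ in two vertices then forces the tail to be a geodesic continuation from $H_k$, which lets me trade off vertices. Concretely, I would show that if $|S\cap R|=j$, then $S\cap V(H_k)$ misses at least $j$ vertices lying on geodesics from $H_k$ into $R$. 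If that trade-off fails, the fallback is to shrink $H_k$ to $k-1$ vertices and let the tail contribute exactly one extra vertex, namely its sink end. In that version the only extra vertex available is a sink, which can be added but which blocks the near-sink.

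The main obstacle is this exact upper bound $\gp\le k$ for the intermediate orientations. Getting the lower bound and the extremes is routine. Pinning the value exactly requires controlling how a general position set can mix vertices of the transitive part with vertices of the directed tail, and this will likely need a small case analysis on where $v_{t+1}$ and its partial neighbourhood fall relative to the cut between $H_k$ and $R$.
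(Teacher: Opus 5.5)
Your treatment of the extreme values is sound. $G_{n,m}$ realises every size in $[n-1,\binom{n}{2}]$, it is traceable, and it is a comparability graph, so $2,n\in S_{\gp}(G_{n,m})$.

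\textbf{The gap.} The proof is missing the whole range $3\le k\le n-1$, which is where the content of the theorem lies. You only obtain $k\le\gp\le k+2$, and the orientation you sketch does not have $\gp=k$ in general. Take $m=n-1$, so that $G_{n,m}$ is the path $u_1\cdots u_n$.
\begin{itemize}
\item A transitive orientation of $H_k$ must alternate. Choose the one in which $u_k$ is a source of $H_k$.
\item With $u_k\rightarrow u_{k+1}$ the unique arc leaving $H_k$, exactly as you propose, $u_k$ stays a source. Then $u_1,\ldots,u_k$ are all sources or sinks and $u_n$ is a sink.
\item Lemma~\ref{lem:ext} therefore gives $\gp\ge |\ext(D)|=k+1$.
\end{itemize}
The set $V(H_k)\cup\{u_n\}$ meets $R$ in one vertex yet misses no vertex of $H_k$, so the trade-off you hope to prove is false. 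The fallback is a suggestion rather than an argument. In denser cases, where many vertices of $H_k$ send arcs into $R$ and $R$ contains clique vertices, you have no tool for bounding sets that mix $H_k$ and $R$.

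\textbf{The missing idea.} Make the exact upper bound automatic by adding one vertex at a time as a controlled extreme vertex. Suppose $x$ is added to $D$ either as a source adjacent to every vertex, or as a sink whose only in-neighbour is a source of $D$ (or dually). Then $D$ remains isometric, so Theorem~\ref{thm:isometric cover} gives $\gp\le\gp(D)+1$. Also $x$ can be added to any $\gp$-set of $D$, giving equality (Lemma~\ref{lem:addingsinks} and the remark after it).

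The paper inducts on $n$ with a stronger hypothesis: $G$ is traceable, with a Hamilton-path endpoint $x$ that is a source or sink in the orientations realising the values.
\begin{itemize}
\item For $m\ge 2n-3$ it adds a universal source to a graph of order $n-1$ and size $m-n+1$.
\item For $m\le 2n-4$ it hangs a leaf on the distinguished endpoint of a graph of order $n-1$ and size $m-1$.
\end{itemize}
In both cases the values $2,\ldots,n-1$ shift up to $3,\ldots,n$, and the value $2$ comes from Lemma~\ref{lem:path cover bound}.

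\textbf{A caution if you take this route.} For $n\ge 3$, a universal vertex is never a source or sink of an orientation with $\gp=2$, since Lemma~\ref{lem:addingsinks} forces $\gp\ge 3$. The paper's write-up asserts otherwise. So when the leaf $x$ is hung on a universal endpoint $x'$, you must produce the value $3$ directly. Write $G''=G'-x'$, orient $G''$ with $\gp=2$, direct all edges between $G''$ and $x'$ towards $x'$, and add $x'\rightarrow x$. This has $\gp$ exactly $3$, with $x$ a sink, so the induction can continue.
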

	\begin{proof}
		We prove the slightly stronger statement that for $n \geq 2$ and $m$ in the range $n-1 \leq m \leq \binom{n}{2}$, there is a traceable graph $G$ of order $n$ and size $m$ with spanning path $P$ and an endpoint $x$ of $P$ such that for all $2 \leq k \leq n$, there is an orientation $G_k$ of $G$ with general position number $k$ such that $x$ is a source or a sink. The result is true when $m = n-1$, as the path $P_n$ has the required property by Lemma~\ref{lem:path cover bound}. The result holds for $n = 2$.
		
		Assume that the result is true for all orders less than $n$. Suppose that $2n-3 \leq m \leq \binom{n}{2}$. By induction there is a graph $G'$ with order $n-1$, size $m-(n-1)$ and a spanning path $P'$, such that for all $2 \leq k \leq n-1$, $G'$ has an orientation $G'_k$ with general position number $k$. Add a new universal vertex $x$ to $G'$, yielding a graph $G$ with order $n$, size $m$, and a spanning path $P$ with $x$ as an endpoint. For $2 \leq k \leq n-1$, the orientation $G_k$ consisting of $G'_k$ and source $x$ has general position number $k+1$ by Lemma~\ref{lem:addingsinks}. Also, as $G$ is traceable, it has an orientation with general position number 2 and $x$ as a source by Lemma~\ref{lem:path cover bound}.  
		
		Now suppose that $n-1 \leq m \leq 2n-4$. Let $G'$ be a graph with order $n-1$ and size $m-1$ of the required type, where $P'$ is the spanning path and $x'$ is the endpoint of $P'$ that is a source or sink in each of the orientations $G'_k$. Add a leaf $x$ to $G'$ connected to $x'$ to give a graph $G$ with order $n$, size $m$ and a spanning path $P$ beginning with the edge $xx'$. For $2 \leq k \leq n$, give this graph the orientation $G_k$ corresponding to $G'_k$ and orient the edge $xx'$ as $x' \rightarrow x$ if $x$ is a source and $x \rightarrow x'$ if $x$ is a sink. By the note after Lemma~\ref{lem:addingsinks}, $G_k$ has general position number $k+1$. Again, an orientation of $G$ with general position number 2 and $x$ as a source follows by Lemma~\ref{lem:path cover bound}. 
	\end{proof}
	
	One consequence of note is that for $n \geq 2$ and $n-1 \leq m \leq \binom{n}{2}$ there is a connected graph with order $n$ and size $m$ and an orientation $G^{\rightarrow }$ such that $\gp (G^{\rightarrow }) = \gp (G)$. 
	
	As noted in Subsection~\ref{subsec:trees}, finding the general position number of an oriented tree is difficult in general. We therefore confine ourselves to determining the $\gp $-spectrum of two families of trees. By a \emph{spider} we mean a subdivision of a star in which each edge of the star has been subdivided at least once. Let $x$ be the unique vertex of the spider with degree $\geq 3$.
	
	\begin{theorem}\label{thm:spider}
		If $T$ is a spider with order $n$, then $T$ has $\gp $-spectrum \[
		S_{\gp}(T) = \begin{cases}
			[\ell(T),n] & \text{ if }\deg(x)\text{ is even;}\\
			[\ell(T)+1,n] & \text{ if }\deg(x)\text{ is odd.}\\
		\end{cases}
		\]
	\end{theorem}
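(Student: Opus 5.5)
The plan is to split into a lower bound valid for every orientation and a family of orientations realising every value in the claimed interval. Throughout, write $k=\deg(x)=\ell(T)$ for the number of legs; each leg is a path from $x$ to a leaf with at least two edges. The key structural fact I will use is that in an oriented tree every geodesic is the unique directed path between its ends. So a set $S$ fails to be in general position exactly when three of its vertices lie on a common directed path.

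\emph{Lower bound.} Lemma~\ref{lem:leaf bound} gives $\gp(T^{\rightarrow})\ge \ell(T)$ for every orientation, so only the odd case needs more work. There I will show that some set of size $\ell(T)+1$ is in general position.
\begin{itemize}
\item If some non-leaf vertex is a source or a sink (this includes $x$), then $\ext(T^{\rightarrow})$ already has at least $\ell(T)+1$ vertices, and Lemma~\ref{lem:ext} finishes.
\item Otherwise every leg is a directed path, and $x$ is neither a source nor a sink. Let $A$ be the legs directed into $x$ and $B$ the legs directed out of $x$; both are nonempty.
\item Every directed path between different legs runs from an $A$-leg through $x$ into a $B$-leg. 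Hence the set $S$ consisting of two vertices from each $A$-leg (possible since legs have length $\ge 2$) is in general position. Indeed, three vertices of $S$ on one directed path would have to include vertices from two $A$-legs, which no directed path contains.
\item This gives $\gp \ge 2|A|$, and symmetrically (or via Observation~\ref{obs 1}) $\gp \ge 2|B|$. Since $|A|+|B|=k$ is odd, $\max\{2|A|,2|B|\}\ge k+1$.
\end{itemize}

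\emph{Sharpness of the lower end.} For $k$ even, I take every leg to be a directed path with $|A|=|B|=k/2$, and show $\gp=\ell(T)$. For a general position set $S$:
\begin{itemize}
\item Each leg contributes at most $2$ vertices, since it is a directed path.
\item If some $A$-leg contributes two vertices, then no vertex of a $B$-leg or $x$ can be in $S$, because the $A$-leg continues through $x$ into every $B$-leg. The symmetric statement holds for $B$-legs.
\item If $x\in S$, then $S$ has at most one vertex on $A$-legs in total, or at most one on $B$-legs in total.
\end{itemize}
A short case check then gives $|S|\le\max\{2|A|,2|B|,|A|+|B|,\ldots\}=k$.

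For $k$ odd, the same orientation with $|A|=(k+1)/2$ and $|B|=(k-1)/2$ should give exactly $k+1$ by the same counting.

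\emph{Upper part of the spectrum.} The value $n$ comes from the alternating orientation. Since $T$ is bipartite, orienting every edge from one colour class to the other makes every vertex a source or a sink, so the orientation is transitive and Corollary~\ref{cor2.5} applies. For the intermediate values I will interpolate between the minimal orientations above and the alternating one. Starting from the minimal orientation, I introduce direction changes one at a time, working along a leg from its leaf end towards $x$. Each change makes one more internal vertex a source or sink and should raise $\gp$ by exactly one. The lower bound comes from $\ext$ together with the construction above. For the upper bound, I will cut $T$ at the internal sources and sinks into maximal directed subpaths plus the star-like part around $x$. These pieces are convex, hence isometric, subdigraphs, and I will bound $\gp$ by Theorem~\ref{thm:isometric cover}, using that a directed path has $\gp=2$ (Lemma~\ref{lem:diambound}). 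Shared endpoints must be handled so that consecutive pieces are not double counted.

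The main obstacle is this step: getting the upper bound in the interpolating orientations to increase by exactly one per change, rather than jumping by two. The concern is that a single reversal can both create an extreme vertex and free up an extra interior vertex. I would first try performing the reversals so that each new switch vertex lies next to an already extreme vertex, and then count via a sink/near-sink argument in the style of Theorem~\ref{thm:arborescence}. If that fails, I would instead adjust the minimal orientation by reversing whole legs or parts of legs near $x$.
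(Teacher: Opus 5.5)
Your lower bound is correct: you use $\ext$ when some non-leaf vertex is a source or sink, and otherwise take two vertices on each leg of the larger of $A$, $B$. Your orientations realising the two endpoints $2\lceil \ell(T)/2\rceil$ and $n$ are also correct. The paper gets the lower bound a little more directly, by taking two vertices on every leg whose first arc leaves $x$ (or every leg whose first arc enters $x$), without first reducing to directed legs.

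The genuine gap is the middle of the spectrum. Nothing in the proposal shows that every value strictly between $2\lceil \ell(T)/2\rceil$ and $n$ is attained, and the mechanism you sketch does not give unit steps. Reversing an arc $v\rightarrow w$ in the interior of a directed leg makes $v$ a sink and $w$ a source at the same time, so the number of extreme vertices jumps by two. On the other side, cutting at internal sources and sinks and applying Theorem~\ref{thm:isometric cover} counts every cut vertex in two pieces; for the alternating orientation this gives only $2(n-1)$. So you have no matching upper bound.

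The missing idea is the exact $+1$ step noted after Lemma~\ref{lem:addingsinks}. If $z'$ is a source (resp.\ sink) of $D$ and you attach a new vertex $z$ by the arc $z'\rightarrow z$ (resp.\ $z\rightarrow z'$), then $\gp$ rises by exactly one. It rises by at most one by Theorem~\ref{thm:isometric cover}, since $D$ stays isometric. It rises by at least one because the only geodesic through $z$ is the arc $z'z$.

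The paper uses this in an induction on $n$ rather than modifying a fixed orientation. Delete the leaf $z$ of a leg of length at least $3$. Then $T-z$ is a spider with the same centre and the same $\ell(T)$, and its leaf $z'$ is a source or sink in every orientation. Hence $S_{\gp}(T)\supseteq S_{\gp}(T-z)+1=[2\lceil \ell(T)/2\rceil+1,n]$, and together with your minimal orientation this gives the whole interval.

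What remains is the base case where every leg has length $2$. There you need explicit orientations for every value in $[2\lceil \ell/2\rceil,2\ell+1]$.
\begin{itemize}
\item Taking $t$ legs directed away from $x$ and $\ell-t$ directed into $x$ gives $2t$.
\item For odd values, take care. The orientation with $x\rightarrow x_i\leftarrow y_i$ on $t$ legs and all other legs directed into $x$ already has $\ell+t$ extreme vertices, so it gives $2t+1$ only when $t\geq \ell-1$. The paper's base case overlooks this.
\item A family that works is $b$ legs directed away from $x$, one leg $y_k\leftarrow x_k\rightarrow x$, and $\ell-b-1$ legs directed into $x$. This has $\gp=2b+1$ for $\lceil \ell/2\rceil\leq b\leq \ell-1$.
\item The alternating orientation gives $2\ell+1$.
\end{itemize}
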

	\begin{proof}
		$T$ consists of $\deg (x) = \ell (T)$ paths $P^{(i)}$ of length $\ell (i) \geq 2$ identified at the common endpoint $x$. In any orientation of $T$, $x$ has either $\geq \left \lceil \frac{\ell (T)}{2} \right \rceil $ out-neighbours or $\geq \left \lceil \frac{\ell (T)}{2} \right \rceil $ in-neighbours; we assume the former. Then choosing any two vertices from the paths of $T$ associated with out-neighbours of $x$ yields a general position set of order $\geq 2\left \lceil \frac{\ell (T)}{2} \right \rceil $. If all edges of $\left \lceil \frac{\ell (T)}{2} \right \rceil$ of the paths are oriented away from $x$ and the edges of $\left \lfloor \frac{\ell (T)}{2} \right \rfloor $ of the paths are oriented toward $x$, then this orientation has general position number exactly $\left \lceil \frac{\ell (T)}{2} \right \rceil$. Hence $\lgp (T) \geq \ell (T)+1$ if $\ell (T)$ is odd. 
		
		We start with the case that each path $P^{(i)}$ has length 2. Write the vertex set as $\{ x\} \cup \{ x_i,y_i:1 \leq i \leq \ell (T) \} $, where $x \sim x_i$ and $x_i \sim y_i$ for $1 \leq i \leq \ell (T)$. For $\left \lceil \frac{\ell (T)}{2} \right \rceil \leq t \leq \ell (T) $, let $T_1^{\rightarrow }$ be the orientation with $x \rightarrow x_i \rightarrow y_i$ for $1 \leq i \leq t$ and $y_i \rightarrow x_i \rightarrow x$ for $t+1 \leq i \leq \deg(x)$, and let $T_2^{\rightarrow }$ be the orientation with $x \rightarrow x_i$ and $y_i \rightarrow x_i$ for $1 \leq i \leq t$ and $y_i \rightarrow x_i \rightarrow x$ for $t+1 \leq i \leq \ell (T)$. Then it is simple to verify that $\gp (T_1^{\rightarrow }) = 2t$ and $\gp (T_2^{\rightarrow }) = 2t+1$.
		
		We now use induction on the order of the spider. Suppose that the result is true for all spiders with order $< n$ and let $T$ be a spider with order $n$. We can assume that $T$ has a path from $x$ of length at least 3; let $z$ be a leaf vertex of such a path with support vertex $z'$. By induction for $\left \lceil \frac{\ell (T)}{2} \right \rceil \leq k \leq n-1$ the spider $T-z$ has an orientation $T^k$ with general position number $k$. For each $k$ in this range, we add the edge $z'z$ to $T^k$ and orient the edge $z'z$ so that $z'$ is either a source or a sink. By the note after Lemma~\ref{lem:addingsinks}, the resulting orientation has general position number $k+1$. As $T$ has an orientation with general position number $\left \lceil \frac{\ell (T)}{2} \right \rceil $, this has established existence for all required values of the general position number, and the result follows by induction.
	\end{proof}
	
	A \emph{caterpillar} is a tree consisting of a path $P_r$ called the \emph{spine} and leaves attached to the internal vertices of the spine.
	
	\begin{theorem}\label{thm:caterpillar}
		If $T$ is a caterpillar with order $n$, then the $\gp $-spectrum of $T$ is $[\ell (T),n]$.
	\end{theorem}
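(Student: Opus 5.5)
Two things have to be shown: that $\ell(T)$ is a lower bound for every orientation, and that every value in $[\ell(T),n]$ is realised. The lower bound is immediate. In any orientation of $T$, each leaf is a source or a sink, so Lemma~\ref{lem:leaf bound} gives $\gp(T^{\rightarrow}) \geq \ell(T)$ for all orientations. The upper end $n$ is realised because a tree is bipartite and hence a comparability graph, so Corollary~\ref{cor:comparability} applies. It remains to realise every intermediate value.

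For the realisation I would induct on $n$, imitating the proofs of Theorem~\ref{thm:spider} and of the realisation theorem for $\gp$-full graphs. The key tool is the remark after Lemma~\ref{lem:addingsinks}: if a leaf $z$ is attached to a vertex $z'$ and $z'$ is made a source or sink with respect to $z$, the general position number goes up by exactly $1$.

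The base case is the caterpillar in which every spine vertex other than the spine endpoints carries no extra leaves, i.e.\ a path, together with the stars. A path $P_n$ has $\ell = 2$ and the spectrum is $[2,n]$ (take $\lgp = 2$ by Lemma~\ref{lem:path cover bound} and build upward). A star $K_{1,\ell}$ is a comparability graph with $n = \ell+1$, and orienting all edges one way gives $\gp = \ell$.

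For the inductive step, take a caterpillar $T$ that is not a path. Choose a leaf $z$ whose removal leaves a caterpillar $T' = T - z$ with $\ell(T') = \ell(T)-1$; such a $z$ is a pendant leaf hung on an internal spine vertex that carries at least one other leaf, or more generally any leaf not at the end of the spine. Then $n(T') = n-1$ and by induction $T'$ realises every value in $[\ell(T)-1,n-1]$.

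There is a subtlety in adding $z$ back. The remark needs $z'$ to be a source or sink after the new edge is added, and this can fail when the support vertex $z'$ is an internal spine vertex with both in- and out-arcs. So I would instead choose $z$ so that its support $z'$ is itself a leaf of $T'$. Concretely, I would take $z$ to be an end of the spine, extending the spine by one. Since $z'$ is a leaf of $T'$, it is a source or sink in every orientation of $T'$, and orienting $z'z$ appropriately keeps $z'$ a source or sink. By the remark this gives values $[\ell(T')+1,n] = [\ell(T),n]$ whenever $\ell(T') = \ell(T)$; here the leaf count is unchanged, because $z'$ stops being a leaf and $z$ becomes one.

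This reduces everything to caterpillars whose spine endpoints' neighbours carry no extra leaves issue, namely the case where the spine cannot be shortened. There the ends of the spine are $u_1$, whose only neighbour is $u_2$, and $u_2$ may have other pendant leaves. The main obstacle is a caterpillar in which every spine-end removal changes the leaf count. This happens exactly when the neighbour $u_2$ of the spine end $u_1$ has other leaves, so that $u_1$ is just one of several leaves on $u_2$. In that case I would remove $u_1$, observing that $T - u_1$ is a caterpillar with $\ell - 1$ leaves and spine starting at $u_2$. I would then handle the gap directly by an explicit construction: orient the caterpillar so that each spine vertex alternates between source and sink, which should give general position number exactly $\ell(T)$. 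I expect verifying this explicit lowest-value orientation, together with the value $\ell(T)+1$, to be the hardest part, in the same way as the length-2 spider computation in Theorem~\ref{thm:spider}.
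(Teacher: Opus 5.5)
Your framework (the lower bound from Lemma~\ref{lem:leaf bound}, plus an induction that deletes leaves at an end of the spine and re-attaches them via the remark after Lemma~\ref{lem:addingsinks}) is the right one. As written, however, the argument has two genuine gaps.

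The first gap concerns the bottom value $\ell(T)$. In your spine-extension step you write $[\ell(T')+1,n]=[\ell(T),n]$ when $\ell(T')=\ell(T)$. That interval is actually $[\ell(T)+1,n]$, so this step never produces $\ell(T)$. The orientation you then propose for $\ell(T)$, with spine vertices alternately sources and sinks, forces every vertex (including the pendant leaves) to be a source or a sink. Such an orientation has no directed path of length $2$, so it is transitive and has $\gp=n$ by Corollary~\ref{cor2.5}, which is the opposite extreme. Likewise, orienting all edges of a star the same way gives $\gp=\ell+1=n$, not $\ell$.

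What works is to orient the spine $x_1,\ldots,x_r$ as a directed path. The spine is a longest geodesic, so Corollary~\ref{cor:lgp vs diam} applies. Equivalently, apply Lemma~\ref{lem:path cover bound} to the spine together with the $s$ pendant leaves as singletons. This gives $\lgp(T)\le n-(r-1)+1=s+2=\ell(T)$. Combined with Lemma~\ref{lem:leaf bound}, this settles $\lgp(T)=\ell(T)$ directly for every caterpillar, stars included. The induction then only has to realise $[\ell(T)+1,n]$.

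The second gap is the case you single out as the main obstacle, where $x_2$ carries further pendant leaves. After deleting only $x_1$, the vertex $x_2$ is still internal in $T-x_1$ and need not be a source or sink there, so the remark cannot be applied. You leave this case unresolved.

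The missing idea is to delete the whole set $L$ of leaves adjacent to $x_2$, including $x_1$, at once. If $T$ is not a star, $x_2$ is then a leaf of the caterpillar $T'=T-L$, hence a source or sink in every orientation of $T'$, and $\ell(T')=\ell(T)-|L|+1$. Re-attach the vertices of $L$ one at a time: each as a source pointing into $x_2$ if $x_2$ is a sink, or as a sink if $x_2$ is a source. Then $x_2$ keeps this property after each addition, so each step raises $\gp$ by exactly $1$. The inductively available values $[\ell(T)-|L|+1,n-|L|]$ therefore become $[\ell(T)+1,n]$. Your spine-extension step is exactly the case $|L|=1$ of this argument.
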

	\begin{proof}
		Denote the vertices of the path $P_r$ by $x_1,x_2,\ldots ,x_r$ and suppose that there are $s$ additional leaves attached to $x_2,\ldots ,x_{r-1}$, so that $n = r+s$. By Lemma~\ref{lem:leaf bound} we have $\lgp (T) \geq \ell (T) = s+2$. As $T$ has diameter $r-1$, Corollary~\ref{cor:lgp vs diam} shows that $\lgp (T) \leq (r+s)-(r-1)+1 = s+2$. Therefore $\lgp (T) = \ell (T)$. This shows that the result holds when $T$ is a star, so we can assume that $\diam (T) \geq 3$.
		
		We now use induction on the order of the tree. Suppose that the result is true for all trees of order $< n$ and $T$ be a tree with order $n$. If $T$ is a star, we are done, otherwise let $L$ be the set of leaves of $T$ adjacent to $x_2$, including $x_1$. Then by induction the tree $T' = T-L$ has orientations for all $\gp $-numbers between $\ell (T') = \ell (T)-|L|+1$ and order $n(T') = n(T)-|L|$. In each of these orientations $x_2$ will be a source or a sink, so by the note after Lemma~\ref{lem:addingsinks} the set $L$ can be added back to each of these orientations to give an orientation of $T$ with $\gp $-number anywhere in the range $\ell (T)+1$ and $n(T)$. As the existence of an orientation with $\gp $-number $\ell (T)$ has been established, the proof is complete.
	\end{proof}

	\begin{theorem}
		The $\gp $-spectrum of a cycle $C_n$ is given by $[2,n]$ if $n$ is even and $[2,n-1]$ if $n$ is odd. The general position number of the directed cycle $Z_n$ is $\gp (Z_n) = 2$ for $n \geq 3$.
	\end{theorem}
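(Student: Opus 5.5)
We split the statement into three parts: the value $\gp(Z_n)=2$, the containment of the spectrum in the claimed interval, and the construction of an orientation for every value in that interval.

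\emph{The directed cycle.} We dispose of $Z_n$ first. Since $Z_n$ is strong, for any $u,v$ the unique $u,v$-path is the arc sequence $u,u+1,\ldots,v$, and it is a geodesic. Given three distinct vertices, arrange them in cyclic order as $u,w,v$; the unique $u,v$-path then passes through $w$. Hence $\gp(Z_n)\le 2$, and Proposition~\ref{prop1} gives equality. Since $C_n$ is traceable, Lemma~\ref{lem:path cover bound} also gives $\lgp(C_n)=2$.

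\emph{Upper end of the spectrum.} By Corollary~\ref{cor:comparability}, $\ugp(C_n)=n$ exactly when $C_n$ is a comparability graph. This holds for $n$ even, where $C_n$ is bipartite and we orient every edge from one class to the other. It fails for odd $n\ge 5$, since odd cycles of length at least 5 are not comparability graphs. For $n=3$, $C_3=K_3$ is complete, so it is covered by the earlier theorem; we will treat $n=3$ separately, since then $[2,n-1]$ would have to be read carefully. For odd $n\ge5$, deleting a single vertex leaves a path $P_{n-1}$, which is an induced comparability subgraph, so Theorem~\ref{thm:comparability subgraphs} gives $\ugp(C_n)=n-1$.

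\emph{Intermediate values.} It remains to realise every $k$ strictly between $2$ and the maximum. Write $C_n=v_0v_1\cdots v_{n-1}v_0$. The plan is to orient the cycle so that it splits into $t$ maximal directed paths (``runs'') alternating in direction, so that the cycle has $t$ sources and $t$ sinks, with $t\ge1$ ($t=0$ being $Z_n$). By Lemma~\ref{lem:ext} the sources and sinks already form a general position set of size $2t$. We then adjust run lengths to fine-tune the value. Our guess is that the near-sources and near-sinks of short runs can be added, in the spirit of Lemma~\ref{lem:sourcesandsinks}, or that a run of length 1 versus length $\ge2$ changes the count by one. This should give values $2t$ and $2t+1$, in analogy with the spider construction $T_1^{\rightarrow},T_2^{\rightarrow}$ in Theorem~\ref{thm:spider}. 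For the exact computation we would use Theorem~\ref{thm:characterisation}: in an orientation that is not strongly connected, most pairs of vertices on different runs are mutually unreachable, and then only triples lying along a single directed run, or triples through a source or sink, can be collinear. The alternative is an inductive route: subdivide an arc at a vertex which is a source or sink, and use the note after Lemma~\ref{lem:addingsinks} to pass from $C_{n-1}$ to $C_n$ while raising the $\gp$-number by one. This route needs care, since subdividing a cycle edge does not literally add a pendant source or sink.

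\emph{Main obstacle.} The hardest step is the upper bound in this exact calculation: showing that an orientation with $t$ alternating runs has no general position set larger than the intended value. The difficulty is that a long run contributes at most 2 vertices (it is a geodesic, by Lemma~\ref{lem:diambound}), while a source shared by two runs interacts with both. We would handle this with the isometric cover bound, Theorem~\ref{thm:isometric cover}, covering $C_n$ by the runs, each of which is isometric as the unique path between its endpoints, and then count shared endpoints carefully to reach the exact value.
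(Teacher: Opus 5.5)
\textbf{Verdict: there is a genuine gap.} Your parts on $Z_n$, on $\lgp(C_n)=2$ and on the top of the spectrum are correct. For $Z_n$ you argue directly through the unique $u,v$-path, where the paper cites Lemma~\ref{lem:path cover bound}. For $\ugp(C_n)$ you use Corollary~\ref{cor:comparability} and Theorem~\ref{thm:comparability subgraphs}.

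You were right to single out $n=3$, and you should say outright that the statement is false there. $C_3=K_3$ is a comparability graph, and the transitive tournament has general position number $3$ by Corollary~\ref{cor2.5}. So $S_{\gp}(C_3)=[2,3]$, consistent with complete graphs being $\gp$-full. The paper's own argument misses this, since odd cycles fail to be comparability graphs only from length $5$ on.

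The gap is the intermediate values, which are most of the statement. You never fix an orientation realising a prescribed $k$, nor compute its general position number, and your heuristics do not work as stated.

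\emph{The value is not governed by the number of sources and sinks.} Every directed path in a non-strong orientation of $C_n$ lies inside a single run. So if all $2t$ runs have length at least $3$, two interior vertices from each run already form a general position set of size $4t$.

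\emph{Your proposed upper-bound tool is too weak.} Theorem~\ref{thm:isometric cover} applied to the cover by runs gives at best $2\cdot 2t=4t$. That is twice the target when most runs are single arcs, and the theorem has no mechanism for ``counting shared endpoints''. The inductive route is, as you suspected, unavailable: subdividing a cycle edge does not attach a pendant source or sink.

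The missing idea is to choose orientations in which Lemma~\ref{lem:ext} and Lemma~\ref{lem:diambound} meet. The paper orients $0\to1\to\cdots\to j$ and orients the remaining $n-j$ edges alternately. Then every vertex outside $[1,j-1]$ is a source or a sink, which is possible when $n-j$ is odd. For $j\le n-2$ the path $0,\ldots,j$ is a longest geodesic, so
\[ n-j+1\le|\ext(D)|\le\gp(D)\le n-\diam^*(D)+1=n-j+1. \]
For the other parity ($n-j$ even, $j\ge 2$) the paper adds a second run $0\to n-1\to n-2$ of length $2$ and makes every vertex of $[j,n-2]$ a source or a sink. It then exhibits the general position set $[j-1,n-1]$ of size $n-j+1$, matched from above by Lemma~\ref{lem:diambound}.

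Alternatively, you could finish inside your own framework. Prove that when the orientation has at least two sources, each directed path lies in one run and is the unique path between its ends. Then $S$ is in general position if and only if $|S\cap R|\le 2$ for every run $R$. The exact value of any run pattern is then a short count; for example, one run of length $j$ with all other runs single arcs gives $n-j+1$.

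Orientations with a single source need a separate check, because there the longer run need not be a geodesic. The same caveat applies to the extreme values of $j$ in the construction above; the value $2$ is supplied by $Z_n$ in any case.
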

	\begin{proof}
		We identify the vertex set of $C_n$ with $\mathbb{Z}_n$ in the natural fashion. As odd cycles are not comparability graphs but even cycles are, it follows from Corollary~\ref{cor:comparability} that $\ugp (C_n) \leq n-1$ when $n$ is odd and $\ugp (C_n) = n$ for even $n$. Alternatively, observe that if $n$ is odd there must be two consecutive arcs with the same direction, and so $\ugp (C_n) \leq n-1$ follows from Lemma~\ref{lem:diambound}, whereas if $n$ is even the arcs can be oriented alternately to make each vertex a source or sink, and $\ugp (C_n) = n$ follows from Lemma~\ref{lem:ext}.  
		
		For each $1 \leq j \leq n-1$ with $j \equiv n+1 \pmod{2}$, we denote by $Z_n^{(j)}$ the oriented cycle with arcs $(i,i+1)$ for $0 \leq i \leq j-1$ and the other arcs oriented alternately so that the vertices of $\mathbb{Z}_n-[1,j-1]$ are sources or sinks. Then $\gp (Z_n^{(j)}) \geq n-j+1$ by Lemma~\ref{lem:ext} and since $\diam ^*(Z_n^{(j)}) = j$ we have equality by Lemma~\ref{lem:diambound}.
		
		Now for each $1 \leq j \leq n-1$ and $j \equiv n \pmod{2}$, let $Z_n^{(j)}$ be the oriented cycle with arcs $(0,n-1)$, $(n-1,n-2)$ and $(i,i+1)$ for $0 \leq i \leq j-1$ and all remaining edges oriented to make the vertices in $[j,n-2]$ sources or sinks. The set $[j-1,n-1]$ is in general position, giving $\gp (Z_n^{(j)}) \geq n-j+1$, and again we have equality by Lemma~\ref{lem:diambound}.
		
		The final statement follows from Lemma~\ref{lem:path cover bound}.
	\end{proof}
	
	The $\gp $-spectrum of complete multipartite graphs is difficult to find in general. However we show that if the graph is very unbalanced, then the lower oriented general position number can be arbitrarily large. We start with a lower bound on $\lgp (G)$ that applies for all graphs. Two vertices $u,v$ of an undirected graph $G$ are \emph{twins} if $N(u) = N(v)$. A \emph{twin set} of $G$ is a subset $S \subset V(G)$ such that all vertices of $S$ have the same neighbourhood $N(S)$. We extend this to directed graphs $D$ by defining vertices $u,v$ to be twins if $N^+(u) = N^+(v)$ and $N^-(u) = N^-(v)$, and a subset $S \subset V(D)$ is a twin set if all vertices of $S$ have the same out-neighbourhood $N^+(S)$ and the same in-neighbourhood $N^-(S)$. A twin set is easily seen to be in general position, in both the undirected and directed settings.
	
	\begin{lemma}
		If $G$ has a twin set $S$ with common neighbourhood $N(S)$, then \[ \lgp (G) \geq \frac{|S|}{2^{|N(S)|}}.\]   
	\end{lemma}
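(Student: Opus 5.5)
Fix an arbitrary orientation $D$ of $G$; it suffices to exhibit a general position set of $D$ of size at least $|S|/2^{|N(S)|}$. The idea is pigeonhole. Every vertex $s \in S$ has undirected neighbourhood exactly $N(S)$, and in $D$ each edge $sw$ with $w \in N(S)$ is oriented either $s \rightarrow w$ or $w \rightarrow s$. So the pair $(N^+(s),N^-(s))$ is determined by the subset $N^+(s) \subseteq N(S)$, with $N^-(s) = N(S) - N^+(s)$. There are $2^{|N(S)|}$ possible subsets, so some class $S' \subseteq S$ of vertices sharing the same out-neighbourhood has
\[ |S'| \geq \frac{|S|}{2^{|N(S)|}}. \]
These vertices then share the same in-neighbourhood as well, so $S'$ is a twin set of $D$ in the directed sense.

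It remains to check that a directed twin set $S'$ is in general position in $D$. This is the remark made just before the lemma; I would verify it briefly. Vertices of $S'$ are pairwise non-adjacent, since they lie in the undirected twin set $S$ and $N(S) \cap S = \emptyset$.

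Let $u,v \in S'$ and let $P$ be a shortest $u,v$-path. If $d(u,v) = \infty$ there is nothing to check. Otherwise $P$ has length at least 2, and its penultimate vertex $y$ is an in-neighbour of $v$. Suppose $P$ passes through some $w \in S' - \{u,v\}$. The predecessor of $w$ on $P$ lies in $N^-(w) = N^-(v)$, so we may jump from that predecessor directly to $v$. This yields a $u,v$-walk strictly shorter than $P$, because $w \neq v$ and $w$ is followed by at least one more vertex. That contradicts $P$ being a geodesic. (The predecessor exists since $w \neq u$.)

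Hence $\gp(D) \geq |S'| \geq |S|/2^{|N(S)|}$ for every orientation $D$, which gives the bound on $\lgp(G)$. The only point needing care is that non-adjacency and the "shortcut" argument work directly, so I do not expect a real obstacle here. The bound is then mostly useful when $|S|$ is large compared with $2^{|N(S)|}$, for instance for unbalanced complete bipartite or multipartite graphs.
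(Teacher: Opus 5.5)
Your proof is correct and follows the same route as the paper: you apply pigeonhole over the $2^{|N(S)|}$ possible orientation patterns of the edges from a vertex of $S$ to $N(S)$, which produces a directed twin set of size at least $|S|/2^{|N(S)|}$, and such a set is in general position. The only difference is that you prove that directed twin sets are in general position, via the predecessor-shortcut argument, whereas the paper just says this is easily seen; your non-adjacency observation is true but is not needed for that shortcut.
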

	\begin{proof}
		Let $S$ be a twin set of $G$ with common neighbourhood $N(S)$. For each vertex $u \in S$, there are $2^{|N(S)|}$ possible orientations of the edges from $u$ to $N(S)$. By the Pigeonhole Principle, there are at least $\frac{|S|}{2^{|N(S)|}}$ vertices of $S$ with the same orientations of edges to $N(S)$ and these vertices are a twin set in the oriented graph. 
	\end{proof}
	
	\begin{corollary}
		For $r_1 \geq \ldots \geq r_t \geq 2$, $t \geq 2$, the lower oriented general position number of $K_{r_1,\ldots ,r_t}$ is bounded below by \[ \lgp (K_{r_1,\ldots ,r_t}) \geq \frac{r_1}{2^{r_2+\ldots +r_t}}.\] 
	\end{corollary}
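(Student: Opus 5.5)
The plan is to apply the preceding twin-set lemma to the largest partite class. In $K_{r_1,\ldots ,r_t}$, let $S$ be the partite set of size $r_1$. Every vertex of $S$ is adjacent to exactly the vertices outside its own class, so all vertices of $S$ have the common neighbourhood
\[ N(S) = V(K_{r_1,\ldots ,r_t}) - S, \qquad |N(S)| = r_2+\cdots +r_t .\]
Hence $S$ is a twin set of the undirected graph.

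Substituting $|S| = r_1$ and $|N(S)| = r_2+\cdots +r_t$ into the lemma gives the stated inequality immediately.

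To make the argument self-contained, I would also recall why the lemma holds for this choice of $S$.
\begin{itemize}
\item Fix an arbitrary orientation. Each $u\in S$ determines a vector recording, for each $w\in N(S)$, whether $u\rightarrow w$ or $w\rightarrow u$. There are $2^{r_2+\cdots+r_t}$ such vectors.
\item By pigeonhole, at least $r_1/2^{r_2+\cdots+r_t}$ vertices of $S$ share the same vector.
\item Since $S$ is independent, these vertices have identical out- and in-neighbourhoods. They therefore form a directed twin set.
\item A directed twin set $T$ is in general position. If some $v\in T$ were an internal vertex of a geodesic between $u,w\in T$, the predecessor $p$ and successor $s$ of $v$ on the path would satisfy $p\in N^-(v)=N^-(w)$ and $s\in N^+(v)=N^+(u)$. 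In particular $p\rightarrow w$ and $u\rightarrow s$, so the path could be shortened, a contradiction.
\end{itemize}

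Since this bound holds for every orientation, it holds for the minimum, which is the claim. There is no real obstacle here; the only point needing care is that $S$ is independent, so no arc inside $S$ breaks the twin property after orientation.
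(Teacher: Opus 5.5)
Your proposal is correct and follows the paper's intended route: the corollary is an immediate application of the preceding twin-set lemma to the partite class of size $r_1$, whose common neighbourhood has size $r_2+\cdots+r_t$. Your re-derivation of the pigeonhole step, and your proof that a directed twin set is in general position (which the paper only asserts as easily seen), are also sound.
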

	
	\begin{theorem}
		For $r \geq 4$, the $\gp $-spectrum of $K_{r,2}$ is $[\left \lceil \frac{r}{2} \right \rceil ,r+2]$.
	\end{theorem}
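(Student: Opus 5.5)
The plan is to handle the two ends of the spectrum separately, and then fill in every intermediate value with an explicit family of orientations.

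\textbf{Notation and types.} Write the bipartition of $K_{r,2}$ as $A=\{a_1,\dots,a_r\}$ and $B=\{x,y\}$, where $x\not\sim y$ and every $a_i$ is adjacent to both $x$ and $y$. In any orientation, each $a\in A$ has one of four types:
\begin{itemize}
\item a source ($a\rightarrow x$, $a\rightarrow y$);
\item a sink ($x\rightarrow a$, $y\rightarrow a$);
\item type $T_1$ ($x\rightarrow a\rightarrow y$);
\item type $T_2$ ($y\rightarrow a\rightarrow x$).
\end{itemize}
Let the numbers of each type be $s,t,p,q$, so $s+t+p+q=r$.

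\textbf{Upper end.} $K_{r,2}$ is bipartite and hence a comparability graph. So $\ugp(K_{r,2})=r+2$ by Corollary~\ref{cor:comparability}, and this value is attained.

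\textbf{Lower bound.} I claim that in every orientation, $X_1=\source\cup\sink\cup T_1$ restricted to $A$ is in general position, and symmetrically so is $X_2$ (the same with $T_2$ in place of $T_1$). To check $X_1$, I would list the geodesics between its vertices.
\begin{itemize}
\item Nothing in $A$ reaches a source, and a sink reaches nothing.
\item A source reaches any sink or $T_1$ vertex in two steps, through $x$ (or through $y$ in the case of a sink).
\item A $T_1$ vertex reaches a sink via $y$ in two steps.
\item A $T_1$ vertex $c$ reaches another $T_1$ vertex $c'$ only via $c\rightarrow y\rightarrow b\rightarrow x\rightarrow c'$ with $b\in T_2$, or not at all.
\end{itemize}
In every case the internal vertices lie in $B\cup T_2$, which is disjoint from $X_1$. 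Since $|X_1|+|X_2|\ge r$, we get $\gp\ge\lceil r/2\rceil$ for every orientation.

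\textbf{Realising each value $k$.} Take $p=\lceil r/2\rceil$ and $q=\lfloor r/2\rfloor$ as the base orientation, with target value exactly $\lceil r/2\rceil$. Then convert $T_2$ vertices one at a time into sinks; I expect each conversion to raise $\gp$ by exactly one, covering the values up to $r$. To reach $r+1$ and $r+2$, I would use orientations in which $x$ and/or $y$ also become sources, so that the whole of $A$ plus one or two vertices of $B$ is in general position. For instance, all of $A$ sinks gives $r+2$. For $r+1$ I would try all vertices sinks except a single $T_1$ vertex $c$. Here $x\rightarrow c\rightarrow y$ is a geodesic of length 2, because $x\not\sim y$, so Lemma~\ref{lem:diambound} gives $\gp\le r+1$, and $V\setminus\{c\}$ (or $V\setminus\{x\}$) should be in general position.

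\textbf{Matching upper bounds.} For the intermediate orientations I need that $\gp$ is at most $s+t+\max(p,q)$ when $p,q\ge1$. I would prove this via Theorem~\ref{thm:characterisation}, in three steps.
\begin{itemize}
\item Show that a general position set cannot contain $x$ or $y$ together with many vertices of $A$, because $x$ and $y$ lie internally on the length-2 geodesics between $T_1$ and $T_2$ vertices.
\item Show that it cannot meet both $T_1$ and $T_2$ in two vertices each. For $c,c'\in T_1$, the geodesic $c\rightarrow y\rightarrow b\rightarrow x\rightarrow c'$ passes through every $b\in T_2$, so $c,c'$ and any $b\in T_2$ are three vertices on a common geodesic.
\item Handle the mixed cases with one vertex from one class.
\end{itemize}
This case analysis, together with ensuring that $r\ge4$ leaves no small exceptions (for example, a set with one $T_1$ vertex, one $T_2$ vertex and $x$), is the step I expect to be the main obstacle.
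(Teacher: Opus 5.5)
Your lower bound is correct. It actually gives $\gp\ge s+t+\max(p,q)$, a slight strengthening of the paper's argument, which shows that the out-neighbourhood $N^+(y)$ is in general position after possibly passing to the converse. Your witnesses for $r+1$ and $r+2$ also work, with one slip: in the $r+1$ example $V\setminus\{x\}$ is \emph{not} in general position, because $c\rightarrow y\rightarrow w$ is a geodesic. Use $V\setminus\{c\}$ or $V\setminus\{y\}$ instead.

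\textbf{The gap is the value $r$.} After $j$ conversions your orientation has $(s,t,p,q)=(0,j,\lceil r/2\rceil,\lfloor r/2\rfloor-j)$. While $q\ge 1$ its general position number is indeed $j+\lceil r/2\rceil$, which gives $\lceil r/2\rceil,\dots,r-1$. The last conversion makes $q=0$, which is outside the case $p,q\ge 1$ covered by your upper-bound argument, and there your expectation fails. With no sources and no $T_2$ vertices, $x$ has in-degree $0$. The only geodesics of length at least $2$ are then $x\rightarrow c\rightarrow y$ and $c\rightarrow y\rightarrow w$, so $A\cup\{x\}$ is in general position and $\gp=r+1$. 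Your family therefore jumps from $r-1$ to $r+1$, and nothing in the proposal realises $r$.

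\textbf{Repair.} Take two sources and $r-2$ sinks. The only geodesics of length $2$ are $u\rightarrow x\rightarrow w$ and $u\rightarrow y\rightarrow w$, so $A$ is in general position. Deleting any single vertex of $V$ still leaves such a triple; this uses $s\ge 2$ and $t=r-2\ge 2$. Hence $\gp=r$. Alternatively, make your last converted vertex a source rather than a sink. This gives $(1,\lfloor r/2\rfloor-1,\lceil r/2\rceil,0)$, which also has $\gp=r$ when $r\ge 4$.

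\textbf{The paper has the same defect.} Its witness for $r$ is all of the large part sinks except one source. In that orientation $\{x,y\}$ together with the $r-1$ sinks is in general position, since the only arcs among them go from $x$ and $y$ to sinks and $x,y$ cannot reach each other. So that orientation has general position number $r+1$, and a two-source orientation is needed there as well.

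\textbf{Comparison of routes.} Apart from this, your route differs from the paper's only in how it reaches $\lceil r/2\rceil,\dots,r-1$. The paper uses only transit vertices, $t$ of one type and $r-t$ of the other, and gets $\gp=t$. You instead trade transit vertices for sinks. Both need a similar case check for the upper bound. In that check, the correct value for $p,q\ge 1$ is $s+t+\max(p,q,2)$, not $s+t+\max(p,q)$: when $p=q=1$, all of $A$ is in general position. This is harmless for your family, since there $p\ge 2$.
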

	\begin{proof}
		Denote the set of vertices of the part of order $r$ by $X = \{ x_1,\ldots ,x_r\} $ and the vertices of the smaller part by $y,z$. Let $K_{r,2}^{\rightarrow }$ be any orientation of $K_{r,2}$. Taking the converse if necessary, we can assume that in $K_{r,2}^{\rightarrow }$ we have $|N^+(y)| \geq   \left \lceil \frac{r}{2} \right \rceil $. We claim that $N^+(y)$ is in general position. This is trivial if $r \leq 4$, so we set $r \geq 5$. Suppose that there is a shortest $w_1,w_3$-path $P$ through $w_2$, where $w_1,w_2,w_3 \in N^+(y)$. Then $P$ must begin $w_1,z,w,y$, where $w \in N^-(y)$; however, $y$ is adjacent to both $w_2$ and $w_3$, contradicting our assumption that $P$ is a geodesic. Hence all orientations of $K_{r,2}$ have general position number at least $\left \lceil \frac{r}{2} \right \rceil $.
		
		Let $\left \lceil \frac{r}{2} \right \rceil \leq t \leq r$ and consider the orientation in which $y \rightarrow x_i$ and $x_i \rightarrow z$ for $1 \leq i \leq t$ and $x_j \rightarrow y$ and $z \rightarrow x_j$ for $t+1 \leq j \leq r$. If $t = r$, then the orientation has general position number $r+1$ ($X \cup \{ y\} $ is in general position), whilst the upper bound follows from Lemma~\ref{lem:diambound}, so we assume that $t < r$. For $1 \leq i_1,i_2 \leq t$ and $t+1 \leq j_1,j_2 \leq r$ there are shortest paths $x_{i_1},z,x_{j_1},y,x_{i_2}$ and $x_{j_1},y,x_{i_1},z,x_{j_2}$, and a routine argument shows that the largest general position set has order $t$. 
		
		Finally, the orientation in which $x_1,\ldots ,x_{r-1}$ are sinks and $x_r$ is a source has general position number $r$, whilst there is an orientation with general position number $r+2$ by Corollary~\ref{cor:comparability}.
	\end{proof}
	
	\begin{theorem}
		The complete bipartite graph $K_{n,n}$ is $\gp $-full for $n \geq 2$.
	\end{theorem}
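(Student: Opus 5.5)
The plan is to prove, by induction on $n$, that every value $k \in [2,2n]$ is the general position number of some orientation of $K_{n,n}$. Write the parts as $X=\{x_1,\ldots,x_n\}$ and $Y=\{y_1,\ldots,y_n\}$.

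The extreme values come from earlier results. Since $K_{n,n}$ is traceable, Lemma~\ref{lem:path cover bound} gives an orientation with general position number $2$. Since $K_{n,n}$ is bipartite, hence a comparability graph, Corollary~\ref{cor:comparability} gives an orientation with general position number $2n$. For the value $3$, I would partition $V(K_{n,n})$ into a path on $2n-1$ vertices and a single vertex $K_1$. The orientation of Lemma~\ref{lem:path cover bound} then has general position number at most $3$. I would orient the edges at the isolated vertex so that it is a sink, and then check directly that this sink together with the first two vertices of the path is in general position, so the value is exactly $3$. The case $n=2$ is checked by hand ($K_{2,2}=C_4$, whose spectrum $[2,4]$ is given by the cycle theorem).

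For the inductive step I would take an orientation $D$ of $K_{n-1,n-1}$ with $\gp(D)=k$, where $2\le k\le 2n-2$, and add $x_n$ and $y_n$. Orient every edge at $x_n$ away from $x_n$, so $x_n$ is a source, and every edge at $y_n$ towards $y_n$, so $y_n$ is a sink; the edge $x_ny_n$ is oriented $x_n\to y_n$. A source or sink is never an internal vertex of a geodesic, so every shortest path between vertices of $D$ stays inside $D$. Hence $D$ is convex and isometric, and Theorem~\ref{thm:isometric cover} applied to the cover $\{D,\{x_n\},\{y_n\}\}$ gives general position number at most $k+2$. This would realise all values in $[4,2n]$, which together with $2$ and $3$ gives $[2,2n]$.

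The main obstacle is the matching lower bound: showing that a $\gp$-set $S$ of $D$ together with $x_n,y_n$ stays in general position. The danger is, for example, a geodesic $x_n, y_j, x_i$ of length $2$ with both $y_j, x_i\in S$. To rule this out I would strengthen the induction hypothesis. The new hypothesis is that the orientations of $K_{n-1,n-1}$ can be chosen so that the $\gp$-set $S$ consists only of vertices that are sources or sinks of $D$, together with at most one dipath segment (the pattern used for cycles). Under this hypothesis, paths from $x_n$ into $S$ or from $S$ into $y_n$ cannot pass through a second vertex of $S$, because such an intermediate vertex would have to be internal, and sources and sinks never are. It then remains to verify, via the characterisation of Theorem~\ref{thm:characterisation}, that the distance-constant and no-additivity conditions survive the addition of $x_n$ and $y_n$. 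If this strengthened hypothesis proves awkward, the fallback is to build, for each $k$, an explicit orientation directly. I would take $j$ sinks in $X$ and $n-j$ sources in $Y$ and make the remaining vertices follow a directed alternating path, modelled on the $K_{r,2}$ construction. I would then compute the general position number from Lemma~\ref{lem:ext} (lower bound) and Lemma~\ref{lem:diambound} (upper bound).
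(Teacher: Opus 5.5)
\textbf{There is a genuine gap.} Your upper bound in the inductive step is correct. In the extended digraph $D'$, $x_n$ is a source and $y_n$ is a sink, so neither is internal on a geodesic. Hence $D$ is convex, and Theorem~\ref{thm:isometric cover} gives $\gp(D')\le k+2$. The gap is the lower bound, which you flag but do not close.

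Your justification that ``sources and sinks never are internal'' refers to sources and sinks of $D$. In $D'$, however, every $y\in Y$ gains the in-arc $x_n\to y$, and every $x\in X$ gains the out-arc $x\to y_n$. So a source of $D$ lying in $Y$, or a sink of $D$ lying in $X$, becomes internal on the geodesic $x_n,y,x$ or $y,x,y_n$. These have length $2$, since $x_n\not\sim x$ and $y\not\sim y_n$.

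For a concrete failure, orient $K_{n-1,n-1}$ with every arc from $Y$ to $X$. This $D$ is transitive with $\gp(D)=2n-2$, and its unique $\gp$-set $V(D)$ consists only of sources and sinks, so it meets your strengthened hypothesis. Yet $x_n\to y_1\to x_1$ with $x_n\not\to x_1$ shows $D'$ is not transitive, so $\gp(D')<2n$ by Corollary~\ref{cor2.5}.

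There are further problems with the hypothesis. Vertices of a ``dipath segment'' are internal on geodesics by design, so your argument says nothing about them. You also never check that the hypothesis holds for your value-$2$ and value-$3$ orientations, or that the step reproduces it.

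The fallback does not fill the hole. With $j$ sinks in $X$ and $n-j$ sources in $Y$ there are always $n$ extreme vertices. Lemma~\ref{lem:ext} then forces $\gp\ge n$, so values in $[4,n-1]$ are never reached.

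\textbf{The missing idea is the exact criterion.} In $D'$ the geodesics behave as follows:
\begin{itemize}
\item every geodesic starting at $x_n$ is an arc or has the form $x_n,y,x$ with $y\to x$ an arc of $D$;
\item every geodesic ending at $y_n$ is an arc or has the form $y,x,y_n$ with $y\to x$ in $D$;
\item geodesics between vertices of $D$ stay in $D$.
\end{itemize}
Hence, for a general position set $S$ of $D$, the set $S\cup\{x_n,y_n\}$ is in general position if and only if $D$ has no arc from $S\cap Y$ to $S\cap X$. By the isometric cover bound, any general position set of size $k+2$ in $D'$ has this form. So $\gp(D')=k+2$ exactly when some $\gp$-set of $D$ satisfies the condition.

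The invariant to carry is therefore: some $\gp$-set $S$ has every arc between $S\cap X$ and $S\cap Y$ directed from $X$ to $Y$. It behaves well at every stage:
\begin{itemize}
\item it is reproduced automatically, since the only new arcs are $x_n\to Y$ and $X\to y_n$;
\item it holds for value $2$ by taking $S\subseteq X$;
\item it holds for your value-$3$ orientation if the isolated sink is $y_n$ and the path is $x_1,y_1,x_2,\ldots,y_{n-1},x_n$, taking $S=\{x_1,y_1,y_n\}$, which is in general position with $x_1\to y_1$ and $x_1\to y_n$;
\item it holds for $K_{1,1}$ oriented $x_1\to y_1$ as a base.
\end{itemize}

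With that invariant your induction is a valid route, different from the paper's. The paper writes down explicit orientations $K^{(1)}(n)_r$ and $K^{(2)}(n)_r$ assembled from isometric blocks. It bounds them above with Theorem~\ref{thm:isometric cover} and exhibits matching general position sets. Your version is shorter and mirrors the paper's source-adding proof for complete graphs, at the cost of needing the invariant above.
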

	\begin{proof}
		Write the vertex set of $K_{n,n}$ as $X \cup Y$, where $X = \{ x_1,\ldots ,x_n\} $ and $Y= \{ y_1,\ldots ,y_n\} $. For even $0 \leq r \leq n-2$, define the orientation $K^{(1)}(n)_r$ of $K_{n,n}$ as follows: the arcs directed from $Y$ to $X$ are:
		\begin{itemize}
			\item $y_i \rightarrow x_i$ for $1 \leq i \leq n$,
			\item $y_i \rightarrow x_j$ when $r+1 \leq j \leq i-2$, 
			\item $y_i \rightarrow x_j$ when $1 \leq i \leq r$, $i$ is odd and $j \geq i+2$, and
			\item $y_i \rightarrow x_j$ when $1 \leq i \leq r$, $i$ is even and $j \geq i+1$.
		\end{itemize}
		All other arcs are directed from $X$ to $Y$. Observe that for odd $i \leq r$, the subdigraph induced by $\{ x_i,x_{i+1},y_i,y_{i+1}\} $ is isometric, as is the subdigraph induced by $\{ x_{r+1},\ldots ,x_n\} \cup \{ y_{r+1},\ldots ,y_n\} $. Each of these subdigraphs has general position number 2 (observing that the last of these has the orientation defined by Lemma~\ref{lem:path cover bound}), so by Theorem~\ref{thm:isometric cover} we have $\gp (K^{(1)}(n)_r) \leq r+2$. The set $\{ x_1,\ldots ,x_r\} \cup \{ y_{r+1},y_{r+2}\} $ is in general position, so in fact $\gp (K^{(1)}(n)_r) = r+2$. This covers all even values in the range $[2,n]$. A similar idea works for the odd values. In the previous construction, when $0 \leq r \leq n-3$ is even, change the direction of the arc $x_{r+1} \rightarrow y_{r+2}$ to $y_{r+2} \rightarrow x_{r+1}$. Lemma~\ref{lem:path cover bound} and Theorem~\ref{thm:isometric cover} show that the general position number of this new orientation is at most $r+3$, and $\{ x_1,\ldots ,x_{r+2}\} \cup \{ y_{r+1}\} $ is a general position set of that order.
		
		For even $r$ in the range $0 \leq r \leq n-2$ we now derive an orientation $K^{(2)}(n)_r$ from $K^{(1)}(n)_r$ by orienting each arc between $\{ y_{r+1},\ldots ,y_n\} $ and $\{ x_{r+1},\ldots ,x_n\} $ towards $X$, i.e.\ $y_i \rightarrow x_j$ is an arc for $r+1 \leq i,j \leq n$. The subdigraph induced by $\{ x_{r+1},\ldots ,x_n\} \cup \{ y_{r+1},\ldots ,y_n\} $ is isometric and has general position number $2(n-r)$ (as each vertex is a source or sink), and so Theorem~\ref{thm:isometric cover} gives $\gp (K^{(2)}(n)_r) \leq 2n-r$, and $X \cup \{ y_{r+1},\ldots ,y_n\} $ is a general position set, so that equality holds. This gives all even values of the general position number in the range $[n+1,2n]$. As in the previous case, changing the arc $y_{r+1} \rightarrow x_{r+1}$ to $x_{r+1} \rightarrow y_{r+1}$ deals with the odd values in the range.   
	\end{proof}

	For all the graphs for which we have computed the $\gp $-spectrum, the spectrum is an interval. We conjecture that this holds generally.
	
	\begin{conjecture}
		The general position spectrum of any graph is an interval.
	\end{conjecture}

	\section{Conclusion}\label{sec:conclusion}
	
	We conclude with some open problems suggested by this research. One possible direction for investigation is to restrict the orientations considered in Conjecture~\ref{conj:orientation}. It is easy to find families of graphs such that all strong orientations, or all source- and sink-free orientations, have the same general position number. 
	
	\begin{problem}
		Characterise the graphs with the property that all strong orientations (or all source- and sink-free orientations) have the same general position number. 
	\end{problem}
	
	It would be especially interesting to find exact values for trees, permutation digraphs and Kautz digraphs for $k \geq 3$.
	
	\begin{problem}
		Find a formula for the general position number of any tree, as well as permutation digraphs and Kautz digraphs for $k \geq 3$.
	\end{problem}
	
	Regarding complexity, we have shown in Theorem~\ref{thm:NP} that finding the general position number of an oriented graph is NP-complete. However, except for the fact that $\ugp (G) = n$ can be tested in polynomial time, we have not considered the complexity of finding the $\gp $-spectrum. We conjecture that it is hard even to find the lower oriented general position number. 
	
	\begin{conjecture}
		It is NP-hard to decide if a graph $G$ satisfies $\lgp (G) = 2$.
	\end{conjecture}
	
	There are many variants of the general position problem, including monophonic position and mobile general position~\cite{survey}. We suggest that it would be rewarding to investigate directed versions of these problems. Finally, we note that a logical next step would be to define and investigate the general position problem in signed graphs.
	
	\begin{problem}
		Investigate the general position problem for signed graphs.
	\end{problem}

	\section*{Acknowledgements}
	Haritha S thanks the University of Kerala for providing financial support under the University JRF Scheme.


\end{document}

%% file: Ka52.tex
\begin{tikzpicture}[x=0.2mm,y=0.2mm,very thick,vertex/.style={circle,draw,minimum size=10,inner sep=1pt,fill=white,font=\scriptsize}]
	\node at (42.7,266.6) [vertex,fill=red] (v1) {$12$};
	\node at (275,-53.1) [vertex,fill=red] (v2) {$13$};
	\node at (-44.7,-285.3) [vertex] (v3) {$14$};
	\node at (-277,34.4) [vertex] (v4) {$15$};
	\node at (-44.7,266.6) [vertex] (v5) {$21$};
	\node at (125.8,239.6) [vertex] (v6) {$23$};
	\node at (125.8,-258.3) [vertex] (v7) {$24$};
	\node at (-198.6,-206.8) [vertex] (v8) {$25$};
	\node at (-277,-53) [vertex] (v9) {$31$};
	\node at (-127.9,239.6) [vertex] (v10) {$32$};
	\node at (196.6,188.2) [vertex] (v11) {$34$};
	\node at (247.9,-136.2) [vertex] (v12) {$35$};
	\node at (42.6,-285.3) [vertex] (v13) {$41$};
	\node at (-127.8,-258.3) [vertex,fill=red] (v14) {$42$};
	\node at (-198.5,188.3) [vertex,fill=red] (v15) {$43$};
	\node at (247.9,117.5) [vertex] (v16) {$45$};
	\node at (275,34.4) [vertex,fill=red] (v17) {$51$};
	\node at (196.5,-206.9) [vertex,fill=red] (v18) {$52$};
	\node at (-250,-136.2) [vertex,fill=red] (v19) {$53$};
	\node at (-249.9,117.5) [vertex,fill=red] (v20) {$54$};
	\draw [<->] (v1) to (v5);
	\draw [->] (v1) to (v6);
	\draw [->] (v1) to (v7);
	\draw [->] (v1) to (v8);
	\draw [<->] (v2) to (v9);
	\draw [->] (v2) to (v10);
	\draw [->] (v2) to (v11);
	\draw [->] (v2) to (v12);
	\draw [<->] (v3) to (v13);
	\draw [->] (v3) to (v14);
	\draw [->] (v3) to (v15);
	\draw [->] (v3) to (v16);
	\draw [<->] (v4) to (v17);
	\draw [->] (v4) to (v18);
	\draw [->] (v4) to (v19);
	\draw [->] (v4) to (v20);
	\draw [->] (v5) to (v2);
	\draw [->] (v5) to (v3);
	\draw [->] (v5) to (v4);
	\draw [->] (v6) to (v9);
	\draw [<->] (v6) to (v10);
	\draw [->] (v6) to (v11);
	\draw [->] (v6) to (v12);
	\draw [->] (v7) to (v13);
	\draw [<->] (v7) to (v14);
	\draw [->] (v7) to (v15);
	\draw [->] (v7) to (v16);
	\draw [->] (v8) to (v17);
	\draw [<->] (v8) to (v18);
	\draw [->] (v8) to (v19);
	\draw [->] (v8) to (v20);
	\draw [->] (v9) to (v1);
	\draw [->] (v9) to (v3);
	\draw [->] (v9) to (v4);
	\draw [->] (v10) to (v5);
	\draw [->] (v10) to (v7);
	\draw [->] (v10) to (v8);
	\draw [->] (v11) to (v13);
	\draw [->] (v11) to (v14);
	\draw [<->] (v11) to (v15);
	\draw [->] (v11) to (v16);
	\draw [->] (v12) to (v17);
	\draw [->] (v12) to (v18);
	\draw [<->] (v12) to (v19);
	\draw [->] (v12) to (v20);
	\draw [->] (v13) to (v1);
	\draw [->] (v13) to (v2);
	\draw [->] (v13) to (v4);
	\draw [->] (v14) to (v5);
	\draw [->] (v14) to (v6);
	\draw [->] (v14) to (v8);
	\draw [->] (v15) to (v9);
	\draw [->] (v15) to (v10);
	\draw [->] (v15) to (v12);
	\draw [->] (v16) to (v17);
	\draw [->] (v16) to (v18);
	\draw [->] (v16) to (v19);
	\draw [<->] (v16) to (v20);
	\draw [->] (v17) to (v1);
	\draw [->] (v17) to (v2);
	\draw [->] (v17) to (v3);
	\draw [->] (v18) to (v5);
	\draw [->] (v18) to (v6);
	\draw [->] (v18) to (v7);
	\draw [->] (v19) to (v9);
	\draw [->] (v19) to (v10);
	\draw [->] (v19) to (v11);
	\draw [->] (v20) to (v13);
	\draw [->] (v20) to (v14);
	\draw [->] (v20) to (v15);
\end{tikzpicture}

%% file: tree7gp5.tex
\begin{tikzpicture}[x=0.2mm,y=0.2mm,very thick,vertex/.style={circle,draw,minimum size=10,fill=lightgray}]
	\node at (80,180) [vertex] (v1) {};
	\node at (-80,180) [vertex] (v2) {};
	\node at (-220,120) [vertex] (v3) {};
	\node at (-220,240) [vertex] (v4) {};
	\node at (160,180) [vertex] (v5) {};
	\node at (0,180) [vertex] (v6) {};
	\node at (-160,180) [vertex] (v7) {};
	\node at (80,0) [vertex] (v8) {};
	\node at (-80,0) [vertex] (v9) {};
	\node at (-220,60) [vertex] (v10) {};
	\node at (-220,-60) [vertex] (v11) {};
	\node at (160,0) [vertex] (v12) {};
	\node at (0,0) [vertex] (v13) {};
	\node at (-160,0) [vertex] (v14) {};
	\draw [->] (v1) to (v6);
	\draw [->] (v2) to (v6);
	\draw [->] (v3) to (v7);
	\draw [->] (v5) to (v1);
	\draw [->] (v7) to (v2);
	\draw [->] (v7) to (v4);
	\draw [->] (v8) to (v12);
	\draw [->] (v9) to (v14);
	\draw [->] (v10) to (v14);
	\draw [->] (v13) to (v8);
	\draw [->] (v13) to (v9);
	\draw [->] (v14) to (v11);
\end{tikzpicture}